\documentclass[reqno]{amsart}
\usepackage{amsmath,amsfonts,amssymb,amsthm}
\usepackage[usenames,dvipsnames]{xcolor}
\usepackage[colorlinks=true]{hyperref} 
\usepackage{hyperref}
\hypersetup{linkcolor=Violet,citecolor=PineGreen}
\newtheorem{teor}{Theorem}[section]
\newtheorem{lema}[teor]{Lemma}
\newtheorem{prop}[teor]{Proposition}

\theoremstyle{definition}
\newtheorem{defi}[teor]{Definition}

\newtheorem{nota}[teor]{Remark}

\numberwithin{equation}{section}

\newcommand\JM{Mierczy\'nski}

\newcommand{\N}{\mathbb{N}}
\newcommand{\R}{\mathbb{R}}
\newcommand{\T}{\mathbb{T}}
\newcommand{\Z}{\mathbb{Z}}

\newcommand{\PP}{\mathbb{P}}
\newcommand{\be}{\mathbf e}
\newcommand{\w}{\omega}

\newcommand{\nbd}{\nobreakdash}
\newcommand{\n}[1]{\|#1\|}

\newcommand{\Int}{\displaystyle\int }

\newcommand{\lsm}{\left[\begin{smallmatrix}}
\newcommand{\rsm}{\end{smallmatrix}\right]}
\newcommand{\mlsps}{measurable linear skew\nobreakdash-\hspace{0pt}product
semidynamical system}
\DeclareMathOperator{\spanned}{span}
\newcommand{\OFP}{\ensuremath{(\Omega,\mathfrak{F},\PP)}}

\DeclareMathOperator{\lnplus}{ln^{+}}
\begin{document}
\title[Principal Floquet subspaces and exponential separations of type II]
{Principal Floquet subspaces and exponential separations of type II
with applications to random delay differential equations.}
\author[J.~Mierczy\'nski]{Janusz Mierczy\'nski}
\address[Janusz Mierczy\'nski]{Faculty of Pure and Applied Mathematics,
Wroc{\l}aw University of Science and Technology,
Wybrze\.ze Wyspia\'nskiego 27, PL-50-370 Wroc{\l}aw, Poland.}
\email[Janusz Mierczy\'nski]{Janusz.Mierczynski@pwr.edu.pl}
\address[Sylvia Novo, Rafael Obaya]
{Departamento de Matem\'{a}tica Aplicada, Universidad de
Valladolid, Paseo del Cauce 59, 47011 Valladolid, Spain.}
\email[Sylvia Novo]{sylnov@wmatem.eis.uva.es}
\email[Rafael Obaya]{rafoba@wmatem.eis.uva.es}
\thanks{The first author is supported by the NCN grant Maestro 2013/08/A/ST1/00275 and the last two authors are partly supported by MEC (Spain)
under project MTM2015-66330-P and EU Marie-Sk\l odowska-Curie ITN Critical Transitions in
Complex Systems (H2020-MSCA-ITN-2014 643073 CRITICS)}
\author[S.~Novo]{Sylvia Novo}
\author[R.~Obaya]{Rafael Obaya}
\subjclass[2010]{Primary: 37H15, 37L55, 34K06, Secondary: 37A30, 37A40, 37C65, 60H25}
\date{}
\begin{abstract}
This paper deals with the study of principal Lyapunov exponents, principal Floquet subspaces, and exponential separation for positive random linear dynamical systems in ordered Banach spaces. The main contribution lies in the introduction of a new type of exponential separation, called of type II, important for its application to  nonautonomous random differential equations with delay. Under weakened assumptions, the existence of an exponential separation of type II in an abstract general setting is shown, and an illustration of its application to dynamical systems generated by scalar linear random delay differential equations with finite delay is given.
\end{abstract}
\keywords{Random dynamical systems, focusing property, generalized principal Floquet bundle, generalized exponential separation, random delay differential equations}
\maketitle
\section{Introduction}\label{sec-intro}
This paper continues the study of the existence of principal  Lyapunov exponents, principal Floquet subspaces and  generalized exponential separations  for positive random linear skew-product semiflows in ordered Banach spaces. In particular, the concept of generalized exponential separation of type II is introduced as a natural modification of the classical concept, to later show the applicability of  this new theory in the context of nonautonomous functional differential equations with finite delay.
\par
\smallskip
Lyapunov exponents play an important role in the study of  deterministic and random skew-product semiflows. Oseledets~\cite{osel} obtained important results on Lyapunov exponents  and measurable invariant families of subspaces for finite-dimensional linear dynamical systems, currently referred  to  as  Oseledets multiplicative ergodic teorem. An important number of alternative proofs of this theorem as well as extensions of the theory to relevant infinite dimensional dynamical systems have been provided (see Arnold~\cite{Arn}, Gonz\'{a}lez-Tokman and Quas~\cite{GTQu}, Johnson \emph{et al.}~\cite{JoPaSe}, Krengel~\cite{Kre}, Lian and Lu~\cite{Lian-Lu}, Ma\~{n}\'{e}~\cite{Man}, Million\v{s}\v{c}ikov~\cite{Mil}, Raghunathan~\cite{Rag}, Ruelle~\cite{Rue1,Rue2}, Thieullen~\cite{Thieu}, and the references therein).
\par
\smallskip
The largest finite Lyapunov exponent (or top Lyapunov exponent) and its associated invariant subspace play a relevant role in this theory. Classically, the top finite Lyapunov exponent of a positive deterministic or random dynamical system in an ordered Banach space is called the \emph{principal Lyapunov exponent} if the associated invariant family of subspaces, where this Lyapunov exponent is reached, is one-dimensional and spanned by a positive vector. In this case the invariant subspace is called the \emph{principal Floquet subspace}.
\par
\smallskip
The \emph{exponential separation} theory was initiated for positive discrete-time deterministic dynamical systems by Ruelle~\cite{Rue0}, and developed later by Pol\'{a}\v{c}ik and Tere\v{s}\v{c}\'{a}k~\cite{pote,pote1}. Given a strongly ordered Banach space  $X$ and $\theta\colon \Omega \rightarrow \Omega $ a homeomorphism of a compact set $\Omega $, if  $\Omega  \times X \to \Omega \times X$, $(\omega,u) \mapsto (\theta(\omega),T_\omega u)$ defines a vector bundle map with $T_\omega$ compact and strongly positive for each $\omega \in \Omega$, then it admits a continuous decomposition $X=E(\omega) \oplus F(\omega)$ where $E(\omega)$ is the principal Floquet subspace, $F(\omega)$ does not contain any strictly positive vector and the bundle map exhibits an exponential separation on the sum. This statement was generalized by Shen and Yi~\cite{shyi} to continuous-time deterministic (topological) skew-product semiflows $ \R^+ \times \Omega \times X \mapsto \Omega \times X, (t,\omega,u) \rightarrow (\theta_t\w,U_\w(t)\,u)$, where $U_\w(t)$ is strongly positive for each $\omega \in \Omega$ and $t > 0$.   In a typical instance of application, $\Omega$ is the translation hull of the coefficients of some linear differential equation, that is, the closure, in an appropriate topology, of the set of all time-translates of the coefficients, and $U_\w(t)$ is the solution operator of the equation.  Applications and extensions of this theory in the context of nonautonomous ordinary and parabolic partial differential equations can be found in H\'uska and Pol\'{a}\v{c}ik~\cite{HusPol},  H\'uska~\cite{Hus}, H\'uska~\emph{et al.}~\cite{HusPolSaf}, and Novo \emph{et al.}~\cite{NOS2,noos5}, among other references.  The theory of principal Floquet spaces and exponential separations was developed further by \JM\ and Shen~\cite{MiSh1,MiSh3}.  Among others, they consider random families of parabolic linear partial differential equations whose coefficients are evaluated along the trajectories of a measurable dynamical system on a probability space: a random family is of such a type that it is embedded into a continuous deterministic (nonautonomous) family of linear
equations which in its turn generates a (topological) skew-product flow exhibiting an exponential separation.
\par
\smallskip
Novo \emph{et al.}~\cite{NOS2} introduced a  modification of the notion of exponential separation. A linear skew-product semiflow $ \R^+ \times \Omega \times X \rightarrow \Omega \times X, (t,\omega,u) \rightarrow (\theta_t\w,U_\w(t)\,u)$ is considered and  the strong monotonicity condition is substituted by the following dichotomy behaviour:  there exist times $0<t_1<T$ such that $U_\w(t)$ is a compact operator for $t \geq T-t_1$ and $\w\in\Omega$, and for each vector $u \geq 0$ either $U_\w(t_1)\,u=0$ or $U_\w(t_1)\,u\gg 0$.  Under these assumptions,  the existence of a continuous decomposition $X=E(\omega) \oplus F(\omega)$  is proved, where $E(\omega)$ is the principal Floquet subspace and the semiflow  exhibits an exponential separation on the sum, but now $F(\omega) \cap X^+$  is not void and contains those positive vectors $u$ satisfying $U_\w(t_1)\,u=0$. This dynamical behaviour is called exponential separation  (or continuous separation) of type~II, implicity referring to the classical concept as  exponential separation of type~I. Novo \emph{et al.}~\cite{noos5}, Calzada \emph{et al.}~\cite{COS} and Obaya and Sanz~\cite{OS1,OS2} show the importance of the exponential separation of type II in the study of linear and nonlinear nonautonomous functional differential equations with finite delay.
\par
\smallskip
In the case where the coefficients of the linear differential equation are driven by trajectories of a measurable flow $\theta$ on a probability space $\OFP$, the natural setting is that of positive \textit{measurable} linear skew-product semiflows:  $U_{\omega}(t)$ is a positive linear operator depending measurably on $\omega \in \Omega$.
The definition of the {\em generalized exponential separation\/} (of type I) is almost the same as the definition of the exponential separation for topological semiflows, the only difference being that $E(\omega)$ and $F(\omega)$ now depend measurably on $\omega$.  Also, the {\em generalized principal Lyapunov exponent\/} is the largest Lyapunov exponent for $\PP$-a.e. $\omega \in \Omega$.
\par\smallskip
Arnold \emph{et al.} in~\cite{AGD} were the first to prove the existence of generalized exponential separation for discrete-time positive random dynamical systems generated by random families of positive matrices.  Later, \JM\ and Shen~\cite{MiShPart1} provided  the assumptions required for general random positive linear skew-product semiflows (with both discrete and continuous time) in order to admit generalized principal Floquet subspaces and generalized exponential separation of type I (in~contrast to~\cite{MiSh1,MiSh3}, no embedding into topological semiflows was used in the proofs). The application of this theory to a variety of random dynamical systems arising from Leslie matrix models, cooperative linear ordinary differential equations and linear parabolic partial differential equations can be found in \JM\ and Shen~\cite{MiShPart2,MiShPart3}. In particular, the existence of generalized principal Floquet subspaces for  random skew-product semiflows generated by cooperative families of delay differential equations is also obtained in~\cite{MiShPart3}.
\par
\smallskip
In this paper, the positivity conditions satisfied by the linear random dynamical systems are weakened, in order to assure the existence of generalized principal Floquet subspaces and the existence of generalized exponential separations of type~II.
\par\smallskip
The structure and main results of the paper is as follows.
In Section~\ref{sec-prel} the notions and assumptions used throughout the paper are introduced. In particular, $X$ will be an ordered separable Banach space with dual $X^*$ separable and   positive cone $X^+$ normal and reproducing. Conditions of integrability and positivity for the random linear skew-product semiflow are imposed, and a new  focusing assumption is considered. More precisely, there is a positive time  $T>0$  such that if $u \in X^+$ and  $\omega \in \Omega$ then  $U_\omega (T)\,u =0$ or  $U_\omega (T)\,u $ is strictly positive and satisfies  a classical focusing inequality in the terms stated by \JM\ and Shen~\cite{MiShPart1}. For simplicity we fix the scale $T=1$ throughout the  paper. From these assumptions,  the integrability,  positivity and an alternative focusing property for the measurable dual skew-product semiflow are obtained.
 \par\smallskip
Under the new focusing condition, in Section~\ref{sec-GPFS} the existence of a family of generalized principal Floquet subspaces is shown. Section~\ref{sec-GES} is devoted to the introduction of the new concept of generalized exponential separation of type~II and the proof of the existence under the previously considered assumptions. Finally, Section~\ref{sec-delay} illustrates the application of the theory to random dynamical systems generated by scalar linear random delay differential equations with finite delay.
\section{Preliminaries}\label{sec-prel}
A {\em probability space\/} is a triple $\OFP$, where $\Omega$ is a set, $\mathfrak{F}$ is a $\sigma$\nobreakdash-\hspace{0pt}algebra of subsets of $\Omega$, and $\PP$ is a probability measure defined for all $F \in \mathfrak{F}$.  We always assume that the measure $\PP$ is complete.
\par\smallskip
A \emph{measurable dynamical system} on the probability space $\OFP$ is a $(\mathfrak{B}(\R) \otimes \mathfrak{F},\mathfrak{F})$\nobreakdash-\hspace{0pt}measurable mapping $\theta\colon\R\times \Omega\to \Omega$ such that
\begin{itemize}
\item
$\theta(0,\w)=\w$ for any $\w\in\Omega$,
\item
$\theta(t_1+t_2,w)=\theta(t_2,\theta(t_1,\w))$ for any $t_1$, $t_2\in\R$ and any $\w \in\Omega$.
\end{itemize}
We write $\theta(t,\w)$ as $\theta_t\w$. Also, we usually denote measurable dynamical systems by $(\OFP,(\theta_{t})_{t \in \R})$ or simply by $(\theta_{t})_{t \in \R}$.\par
A \emph{metric dynamical system} is a measurable dynamical system $(\OFP,(\theta_{t})_{t \in \R})$
such that for each $t\in\R$  the mapping $\theta_t\colon \Omega\to\Omega$ is $\PP$-preserving (i.e., $\PP(\theta_t^{-1}(F))=\PP(F)$ for any $F\in\mathfrak{F}$ and $t\in\R$).
\subsection{Measurable linear skew-product semidynamical systems}\label{subsec-measurable}
We consider a separable Banach space $X$ such that its dual $X^{*}$ is separable.
\par
\smallskip
We write $\R^{+}$ for $[0, \infty)$.  By a {\em measurable linear skew-product semidynamical system or semiflow},
$\Phi = \allowbreak ((U_\w(t))_{\w \in \Omega, t \in \R^{+}}, \allowbreak (\theta_t)_{t\in\R})$ on  $X$ covering a metric dynamical system $(\theta_{t})_{t \in \R}$ we understand a $(\mathfrak{B}(\R^{+}) \otimes \mathfrak{F} \otimes \mathfrak{B}(X), \mathfrak{B}(X))$\nobreakdash-\hspace{0pt}measurable
mapping
\begin{equation*}
[\, \R^{+} \times \Omega \times X \ni (t,\w,u) \mapsto U_{\w}(t)\,u \in X \,]
\end{equation*}
satisfying
\begin{align}
&U_{\w}(0) = \mathrm{Id}_{X} \quad & \textrm{for each }\,\w  \in \Omega, \label{eq-identity}\\
&U_{\theta_{s}\w}(t) \circ U_{\w}(s)= U_{\w}(t+s) \qquad &\textrm{for each } \,\w \in \Omega \textrm{ and }  t,\,s \in \R^{+},\label{eq-cocycle}\\
&[\, X \ni u \mapsto U_{\w}(t)u \in X \,] \in \mathcal{L}(X) & \textrm{for each }\,\w \in \Omega \textrm{ and } t \in \R^{+}.\nonumber
\end{align}
Sometimes we write simply $\Phi = ((U_\w(t)), \allowbreak (\theta_t))$. Eq.~\eqref{eq-cocycle} is called the {\em cocycle property\/}.
\par\smallskip
For $\w \in \Omega$, by an {\em entire orbit\/} of $U_{\omega}$ we understand a mapping $v_{\omega} \colon \R \to X$ such that $v_\w(s + t) = U_{\theta_{s}\w}(t)\, v_\w(s)$ for each $s \in \R$ and $t\geq 0$.
\par\smallskip
Next we introduce the  {\em dual\/} of $\Phi$. For $\w \in \Omega$, $t \in \R^{+}$ and $u^{*} \in X^*$ we define $U^{*}_{\w}(t)\,u^{*}$~by
\begin{equation}
\label{dual-definition}
\langle u, U^{*}_{\w}(t)\,u^{*} \rangle = \langle U_{\theta_{-t}\w}(t)\,u , u^{*} \rangle \qquad \text{for each } u \in X
\end{equation}
(in other words, $U^{*}_{\w}(t)$ is the mapping dual to $U_{\theta_{-t}\w}(t)$).
\par\smallskip
As explained in~\cite{MiShPart1}, since $X^{*}$ is separable, the mapping
\begin{equation*}
[\, \R^{+} \times \Omega \times X^{*} \ni (t,\w,u^{*}) \mapsto U^{*}_{\w}(t)\,u^{*} \in X^{*} \,]
\end{equation*}
is $(\mathfrak{B}(\R^{+}) \otimes \mathfrak{F} \otimes \mathfrak{B}(X^{*}),
\mathfrak{B}(X^{*}))$\nobreakdash-\hspace{0pt}measurable. The \mlsps\ $\Phi^{*} = ((U^{*}_\w(t))_{\w \in \Omega, t \in \R^{+}},(\theta_{-t})_{t\in\R})$ on $X^{*}$ covering $(\theta_{-t})_{t \in \R}$ will be called  the {\em dual\/} of $\Phi$. The cocycle property for the dual takes the form
\begin{equation}
\label{eq-cocycle-dual}
U^{*}_{\theta_{-t}\w}(s) \circ U^{*}_{\w}(t)= U^{*}_{\w}(t+s) \qquad \textrm{for each } \,\w \in \Omega \textrm{ and }  t,\,s \in \R^{+}
\end{equation}
\par
\smallskip
Let $\Omega_0 \in \mathfrak{F}$.
A family $\{E(\w)\}_{\w \in \Omega_0}$ of $l$\nobreakdash-\hspace{0pt}dimensional vector subspaces of $X$ is {\em measurable\/} if there are $(\mathfrak{F},
\mathfrak{B}(X))$-measurable functions $v_1, \dots, v_l \colon \Omega_0 \to X$ such that $(v_1(\w), \dots, v_l(\w))$ forms a basis of $E(\w)$ for each $\w \in \Omega_0$.
\par\smallskip
Let $\{E(\w)\}_{\w \in \Omega_0}$ be a family of $l$\nobreakdash-\hspace{0pt}dimensional vector subspaces of $X$, and let $\{F(\w)\}_{\w \in \Omega_0}$ be a family of $l$\nobreakdash-\hspace{0pt}codimensional closed vector subspaces of $X$, such that $E(\w) \oplus F(\w) = X$ for all $\w \in \Omega_0$.  We define the {\em family of projections associated with the decomposition\/} $E(\w) \oplus F(\w) = X$ as $\{P(\w)\}_{\w \in \Omega_0}$, where $P(\w)$ is the linear projection of $X$ onto $F(\w)$ along $E(\w)$, for each $\w \in \Omega_0$.
\par\smallskip
The family of projections associated with the decomposition $E(\w) \oplus F(\w) = X$ is called {\em strongly measurable\/} if for each $u \in X$ the mapping $[\, \Omega_0 \ni \w \mapsto P(\w)u \in X \,]$ is $(\mathfrak{F}, \mathfrak{B}(X))$\nobreakdash-\hspace{0pt}measurable.
\par\smallskip
We say that the decomposition $E(\w) \oplus F(\w) = X$, with
$\{E(\w)\}_{\w \in \Omega_0}$ finite\nobreakdash-\hspace{0pt}dimensional, is {\em invariant\/} if $\Omega_0$ is invariant, $U_{\w}(t)E(\w) = E(\theta_{t}\w)$
and $U_{\w}(t)F(\w) \subset F(\theta_{t}\w)$, for each $t \in \T^{+}$.
\par\smallskip
A strongly measurable family of projections associated with the invariant decomposition $E(\w) \oplus F(\w) = X$ is referred to as {\em tempered\/} if
\begin{equation}\label{tempered}
\lim\limits_{t \to \pm\infty} \frac{\ln{\n{P(\theta_{t}\w)}}}{t} = 0 \qquad \PP\text{-a.e. on }\Omega_0.
\end{equation}
\subsection{Ordered Banach spaces}\label{subsec-OBS} Let $X$ be a Banach space with norm $\n{\cdot}$. We say that $X$ is an {\em ordered Banach space\/} if there is a closed convex cone, that is, a nonempty closed subset $X^+\subset X$ satisfying
\begin{itemize}
\item[(O1)] $X^++X^+\subset X^+$.
\item[(O2)] $\R^+ X^+\subset X^+$.
\item[(O3)] $X^+\cap (-X^+)=\{0\}$.
\end{itemize}
Then  a partial ordering in $X$ is defined by
\begin{align*}
u  \leq v \quad& \Longleftrightarrow \quad v -u\in X_+\,;\\
 u< v    \quad  & \Longleftrightarrow \quad v-u\in X_+
 \text{ and }\; u\neq v\,.
\end{align*}
The cone $X^+$ is said to be {\em reproducing\/} if $X^+-X^+=X$. The cone $X^+$ is said to be {\em normal} if the norm of the Banach space $X$ is {\em semimonotone\/}, i.e., there is a positive constant $k>0$ such that $0\leq u\leq v$ implies $\n{u}\leq k\,\n{v}$. In such a case, the Banach space can be renormed so that for any $u$, $v\in X$, $0\leq u\leq v$ implies $\n{u}\leq\n{v}$ (see Schaefer~\cite[V.3.1, p. 216]{Schaef}). Such a  norm  is called {\em monotone}.
\par
\smallskip
For an ordered Banach space $X$ denote by $(X^{*})^{+}$ the set of all $u^{*} \in X^{*}$ such that $\langle u, u^{*} \rangle \ge 0$ for all $u \in X^{+}$.  The set $(X^{*})^{+}$ has the properties of a cone, except that $(X^{*})^{+} \cap (-(X^{*})^{+}) = \{0\}$ need not be satisfied (such sets are called {\em wedges\/}).
\par
\smallskip
If $(X^{*})^{+}$ is a cone we call it the {\em dual cone}.  This happens, for instance, when $X^{+}$ is total (that is, $X^{+}- X^{+}$ is dense in $X$, which in particular holds when $X^+$ is reproducing and this will be one of our hypothesis).
\par\smallskip
Nonzero elements of $X^{+}$ (resp.~of $(X^{*})^{+}$) are called {\em positive\/}.
We say that two positive vectors $u$, $v\in X^+\setminus\{0\}$ are {\em comparable\/}, written $u\sim v$, if there are positive numbers, $\underline{\alpha}$, $\overline{\alpha}$, such that $\underline{\alpha}\, v \le u \le \overline{\alpha} \,v$. For a nonzero vector $u\in X^+$ we call the {\em component of $u$\/}
\begin{equation}\label{component}
C_u=\{v\in X^+\setminus \{0\}\mid v\sim u\}\,,
\end{equation}
i.e., the equivalence class of $u$.\par\smallskip
We now recall the concept of the Hilbert projective metric.
\begin{defi}\label{ProjectMetric} Let $u$, $v\in X$.
\begin{itemize}
\item[(1)] If $\{\underline{\alpha}\in\R \mid \underline{\alpha}\, v \le u\}$ is nonempty, define
    \[m(u/v) := \sup \{\underline{\alpha}\in\R \mid \underline{\alpha}\, v \le u\}\,.\]
 If $\overline{\alpha}\in\R\mid \{u \le \overline{\alpha} \,v\}$ is nonempty, define
    \[ M(u/v):=\inf \{\overline{\alpha}\in\R\mid  u\le \overline{\alpha} \,v\}\,.\]
 \item[(2)] If both $m(u/v)$ and $M(u/v)$ exist, define the {\em oscillation\/} of $u$ over $v$, and  if $u\sim v$ the {\em projective distance\/} between $u$ and $v$ as
 \begin{equation}\label{projdist}
 \textit{osc}\,(u/v):=M(u/v)-m(u/v)\quad\text{and}\quad d(u,v):=\ln\frac{M(u/v)}{m(u/v)}\,.
 \end{equation}
\end{itemize}
\end{defi}
\par
For the next result, see~\cite[Lemma~4.6]{MiShPart1}.
\begin{lema}
\label{lm:projective-vs-norm}
Assume that $X^{+}$ is normal.  Then for any $u, v \in X^{+}$, $u \sim v$, with $\lVert u \rVert = \lVert v \rVert = 1$, there holds
\begin{equation*}
\lVert u-v \rVert \le 3 \bigl( e^{d(u,v)}-1 \bigr).
\end{equation*}
\end{lema}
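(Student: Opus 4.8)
The plan is entirely elementary: reduce everything to the two scalars $m := m(u/v)$ and $M := M(u/v)$ from Definition~\ref{ProjectMetric}, and play the order sandwich $mv \le u \le Mv$ against the normalisation $\n{u} = \n{v} = 1$. Throughout I would use that, $X^{+}$ being normal, the norm may be assumed monotone (see Subsection~\ref{subsec-OBS}).

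First I would record the preliminaries. Since $u \sim v$, the numbers $m$ and $M$ are well defined with $0 < m \le M < \infty$; as $X^{+}$ is closed, the supremum and infimum in Definition~\ref{ProjectMetric} are attained in the order sense, so $mv \le u \le Mv$, while $d(u,v) = \ln(M/m)$ by definition. Now apply monotonicity of the norm together with $\n{v} = 1$: from $0 \le mv \le u$ one gets $m = \n{mv} \le \n{u} = 1$, and from $0 \le u \le Mv$ one gets $1 = \n{u} \le \n{Mv} = M$, so that
\[
0 < m \le 1 \le M .
\]
This is the only point at which the normalisation is used, and it is precisely what guarantees $mv \le v \le Mv$.

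Next comes the main estimate. From $mv \le u \le Mv$ we have $0 \le u - mv \le (M-m)v$ and $0 \le Mv - u \le (M-m)v$, so monotonicity gives $\n{u - mv} \le M - m$ and $\n{Mv - u} \le M - m$. Writing $u - v$ in the two ways
\[
u - v = (u - mv) - (1-m)v \qquad\text{and}\qquad u - v = (u - Mv) + (M-1)v ,
\]
and using $1 - m \ge 0$, $M - 1 \ge 0$ with the triangle inequality, I obtain $\n{u-v} \le (M-m) + (1-m)$ and $\n{u-v} \le (M-m) + (M-1)$. Keeping the smaller right-hand side and noting $\min\{1-m,\,M-1\} \le \tfrac12\bigl((1-m)+(M-1)\bigr) = \tfrac12(M-m)$ yields $\n{u-v} \le \tfrac32(M-m)$. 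Finally, since $0 < m \le 1$,
\[
\n{u-v} \le \tfrac32\,(M-m) \le \tfrac32\cdot\frac{M-m}{m} \le 3\Bigl(\frac{M}{m}-1\Bigr) = 3\bigl(e^{d(u,v)}-1\bigr),
\]
which is the assertion.

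I do not expect a genuine obstacle here: the only things needing care are the directions of the order relations when $u$ is compared with $v$, the use of closedness to pass from the suprema/infima to $mv \le u \le Mv$, and the elementary bound $m \le 1 \le M$, which is what forces the constant down to the absolute value $3$. (If one prefers to skip the $\min$ step, the single estimate $\n{u-v} \le (M-m)+(1-m)$ already does the job, since $M \ge 1$ reduces the desired inequality to $(2m-3)(m-1) \ge 0$, which holds for every $m \in (0,1]$.)
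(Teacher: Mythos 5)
Your proof is correct, and it is a nice, self-contained elementary argument. Note, however, that the paper does not prove this lemma: it simply cites Lemma~4.6 of \JM\ and Shen \cite{MiShPart1}, so there is no in-text proof to compare against. Your argument is exactly the kind of order-sandwich computation one would expect: establish $0 < m \le 1 \le M$ from the normalisation and monotonicity, bound $\n{u - mv}$ and $\n{Mv - u}$ by $M-m$, use the two decompositions of $u-v$ to get $\n{u-v} \le \tfrac32 (M-m)$ after averaging, and finally use $m \le 1$ to pass to $3(M/m - 1) = 3(e^{d(u,v)}-1)$. I checked each step: the use of closedness of $X^{+}$ to get $mv \le u \le Mv$ from the definitions of $m$ and $M$, the four monotonicity estimates, the triangle inequalities, the $\min$/average step, and the final chain $\tfrac32(M-m) \le \tfrac32 \cdot (M-m)/m \le 3(M-m)/m$; all are sound. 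The alternative remark at the end, avoiding the $\min$ step, also verifies: with $M \ge 1$ the worst case is $M=1$, where the claim reduces to $(1-m)(3-2m)/m \ge 0$, equivalently $(2m-3)(m-1) \ge 0$ on $(0,1]$, as you say. Since the paper outsources this lemma to a citation, your proof is a genuine addition rather than a variant of anything in the text.
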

\smallskip
\subsection{Assumptions}\label{sec-assumptions} Throughout the paper we will assume that $X$ is an ordered separable  Banach space with $\dim X\geq 2$ such that its dual $X^{*}$ is separable, with positive cone $X^+$ normal and reproducing.  It follows then that the dual cone $(X^{*})^{+}$ is normal and reproducing, too (see~\cite[V.3]{Schaef}). We always assume that the norms on $X$ and on $X^{*}$ are monotone.
\par
\smallskip
Let $\Phi = ((U_\omega(t)), (\theta_t))$ be a \mlsps\ on $X$ covering an ergodic metric dynamical system $(\theta_{t})$ on $\OFP$, and its dual $\Phi^{*} = ((U^{*}_\w(t))_{\w \in \Omega, t \in \R^{+}},(\theta_{-t})_{t\in\R})$ on $X^{*}$ covering $(\theta_{-t})_{t \in \R}$.  \par\smallskip
We now list assumptions we will make at various points in the sequel.
\par\smallskip
\noindent (\textbf{A1}) (Integrability) The functions
\begin{equation*}
\bigl[ \, \Omega \ni \omega \mapsto \sup\limits_{0 \le s \le 1} {\lnplus{\n{U_{\omega}(s)}}} \in [0,\infty) \, \bigr] \in L_1\OFP\;\; \text{ and}
\end{equation*}
\begin{equation*}
\bigl[ \, \Omega \ni \omega \mapsto \sup\limits_{0 \le s \le 1}
{\lnplus{\n{U_{\theta_{s}\omega}(1-s)}}} \in [0,\infty) \, \bigr] \in L_1\OFP.
\end{equation*}
\par\smallskip
\noindent (\textbf{A1})* (Integrability) The functions
\begin{equation*}
\bigl[ \, \Omega \ni \omega \mapsto \sup\limits_{0 \le s \le 1} {\lnplus{\n{U_{\w}^*(s)}}} \in [0,\infty) \, \bigr] \in L_1\OFP\;\; \text{ and}
\end{equation*}
\begin{equation*}
\bigl[ \, \Omega \ni \omega \mapsto \sup\limits_{0 \le s \le 1}
{\lnplus{\n{U_{\theta_{s}\w}^*(1-s)}}} \in [0,\infty) \, \bigr] \in L_1\OFP.
\end{equation*}
\par\smallskip
\noindent (\textbf{A2}) (Positivity) For any $\omega \in \Omega$, $t \ge 0$ and $u_1, u_2 \in X$ with $u_1 \le u_2$
\begin{equation*}
U_{\omega}(t)\,u_1 \le U_{\omega}(t)\,u_2\,.
\end{equation*}
\par\smallskip
\noindent (\textbf{A2})* (Positivity)  For any $\omega \in \Omega$, $t \ge 0$ and $u^{*}_1, u^{*}_2 \in X^*$ with $u^{*}_1 \le u^{*}_2$
\begin{equation*}
U^{*}_{\omega}(t)\,u^{*}_1 \le U^{*}_{\omega}(t)\,u^{*}_2\,.
\end{equation*}
Notice that in our case, as explained in~\cite{MiShPart1}, (A1)* follows from (A1) and (A2)* follows from (A2).
\par
\smallskip
For a \mlsps\ $\Phi$ satisfying (A2) we say that an entire orbit $v_{\w}$ of $U_{\w}$ is {\em positive\/} if $v(\w) \in X^{+} \setminus \{0\}$ for all $t \in \R$.  Similarly, when (A2)* is satisfied, an entire orbit $v^{*}_{\w}$ of $U^{*}_{\w}$ is {\em positive\/} if $v^{*}(\w) \in (X^{*})^{+} \setminus \{0\}$ for all $t \in \R$.
\par
\medskip
Next we introduce focusing conditions (A3) and (A3)* in the following way.
\par\smallskip
\noindent (\textbf{A3}) (Focusing) (A2) is satisfied  and there are $\be\in X^+$ with $\|\be\|=1$ and $U_\w(1)\,\be\neq 0 $ for all $\w\in\Omega$, and an $(\mathcal{F},\mathcal{B}(\R))$\nbd-measurable function $\varkappa\colon\Omega\to [1,\infty)$ with
$\lnplus\ln\varkappa\in L_1((\Omega,\mathcal{F},\PP))$ such that for any  $\w\in\Omega$ and any nonzero $u\in X^+$
\begin{itemize}
\item $U_\w(1)\,u=0$, or
\item $U_\w(1)\,u\neq 0$ and there is $\beta(\w,u)>0$ with the property that
\begin{equation}\label{focusing}
\beta(\w,u)\,\be\leq U_\w(1)\,u\leq \varkappa(\w)\,\beta(\w,u)\,\be\,.
\end{equation}
\end{itemize}
\begin{nota}\label{rm:dichotomy}
Under~(A3), by the cocycle property~\eqref{cocycle2}, for $u \in X^{+}$ the following dichotomy holds:
\begin{itemize}
\item
$U_\w(t)\, u = 0$ for all $t \ge 1$, or
\item
$U_\w(t)\, u > 0$ for all $t \ge 0$.
\end{itemize}
\end{nota}
\medskip
\noindent (\textbf{A3})* (Focusing for $X^*$) (A2)* is satisfied and there are ${\bf e^*}\in (X^*)^+$ with $\langle \be,\be^*\rangle=1$ and $\| \be^*\|=1$ and an $(\mathcal{F},\mathcal{B}(\R))$-measurable function $\varkappa^*\colon \Omega\to[1,\infty)$ with
$\lnplus\ln\varkappa^*\in L_1((\Omega,\mathcal{F},\PP))$ such that for any  $\w\in\Omega$ there holds $U_\w^*(1)\, \be^* \ne 0$, and for any $\w\in\Omega$ and any nonzero $u^*\in (X^*)^+$ there is $\beta^*(\w,u^*)>0$ with the property that
\begin{equation}\label{focusing*}
\beta^*(\w,u)\,U_\w^*(1)\,{\bf e^*}\leq U_\w^*(1)\,u^*\leq \varkappa^*(\w)\,\beta^*(\w,u^*)\,U^*_\w(1)\,\be^*\,.
\end{equation}
\par
\smallskip
It should be remarked that our condition (A3) is weaker than condition (A3) in~\cite{MiShPart1}.  We write the latter as:
\par
\smallskip
\noindent (\textbf{A3-O}) (A2) is satisfied  and there are $\be \in X^+$ with $\|\be\| = 1$ and $U_\w(1) \, \be \neq 0 $ for all $\w \in \Omega$, and an $(\mathcal{F},\mathcal{B}(\R))$\nbd-measurable function $\varkappa \colon \Omega \to [1,\infty)$ with
$\lnplus \ln\varkappa \in L_1((\Omega,\mathcal{F},\PP))$ such that for any  $\w \in \Omega$ and any nonzero $u \in X^+$ there holds $U_\w(1) \, u \neq 0$ and there is $\beta(\w,u) > 0$ with the property that
\begin{equation}
\label{focusing-old}
\beta(\w,u) \, \be \leq U_\w(1) \,u \leq \varkappa(\w) \,\beta(\w,u)\,\be\,.
\end{equation}
\par
\smallskip
Condition (A3)* follows from (A3), as we prove now.
\begin{prop}\label{A3->A3*} {\rm(\textbf{A3}) $\Rightarrow$ (\textbf{A3})* }.
\end{prop}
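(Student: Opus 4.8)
The plan is to take $\be^*$ to be a positive norming functional for $\be$, push the focusing inequality \eqref{focusing} through the duality \eqref{dual-definition} by testing against positive vectors of $X$, and then simply read off $\beta^*$ and $\varkappa^*$ from the resulting scalar estimates. Since the positivity part (A2)* of (A3)* is already known to follow from (A2), only the $\be^*$\nobreakdash-part has to be produced.

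First I would fix $\be^*$. Because $\be\in X^+$ has $\n{\be}=1$ and the norm on $X$ is monotone, there is a \emph{positive} norming functional $\be^*\in (X^*)^+$ with $\n{\be^*}=1$ and $\langle\be,\be^*\rangle=1$ (a standard separation argument: $(1+\ep)\be$ lies outside the closure of $\{x-v:\n{x}\le 1,\ v\in X^+\}$ for every $\ep>0$ by monotonicity, so a supporting functional at $\be$ does the job; this is the kind of fact used throughout \cite{MiShPart1}, cf.\ \cite{Schaef}). With this choice, applying \eqref{focusing} at the point $\theta_{-1}\w$ to $u=\be$ — legitimate since $U_{\theta_{-1}\w}(1)\,\be\neq 0$ by (A3) — and pairing with $\be^*\in(X^*)^+$ gives, via \eqref{dual-definition}, $\langle\be,U^*_\w(1)\be^*\rangle=\langle U_{\theta_{-1}\w}(1)\,\be,\be^*\rangle\ge\beta(\theta_{-1}\w,\be)>0$, so $U^*_\w(1)\,\be^*\neq 0$ for every $\w\in\Omega$.

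Next, fix $\w\in\Omega$, put $\sigma:=\theta_{-1}\w$, and take a nonzero $u^*\in(X^*)^+$. For each test vector $v\in X^+$ there are two cases. If $U_\sigma(1)\,v=0$, then by \eqref{dual-definition} both $\langle v,U^*_\w(1)u^*\rangle$ and $\langle v,U^*_\w(1)\be^*\rangle$ vanish. If $U_\sigma(1)\,v\neq 0$, then \eqref{focusing} gives $\beta(\sigma,v)\,\be\le U_\sigma(1)\,v\le\varkappa(\sigma)\,\beta(\sigma,v)\,\be$, and pairing with the positive functionals $u^*$ and $\be^*$ (using $\langle\be,\be^*\rangle=1$) turns this into
\[
\beta(\sigma,v)\langle\be,u^*\rangle\le\langle v,U^*_\w(1)u^*\rangle\le\varkappa(\sigma)\beta(\sigma,v)\langle\be,u^*\rangle,\qquad \beta(\sigma,v)\le\langle v,U^*_\w(1)\be^*\rangle\le\varkappa(\sigma)\beta(\sigma,v).
\]
Eliminating $\beta(\sigma,v)$ between these (the resulting inequalities holding trivially in the first case as well) yields, for every $v\in X^+$,
\[
\frac{\langle\be,u^*\rangle}{\varkappa(\sigma)}\,\langle v,U^*_\w(1)\be^*\rangle\le\langle v,U^*_\w(1)u^*\rangle\le\varkappa(\sigma)\,\langle\be,u^*\rangle\,\langle v,U^*_\w(1)\be^*\rangle.
\]
Since $a^*\le b^*$ in $X^*$ means exactly $\langle v,a^*\rangle\le\langle v,b^*\rangle$ for all $v\in X^+$, this is precisely \eqref{focusing*} with
\[
\beta^*(\w,u^*):=\frac{\langle\be,u^*\rangle}{\varkappa(\theta_{-1}\w)},\qquad\varkappa^*(\w):=\bigl(\varkappa(\theta_{-1}\w)\bigr)^2,
\]
valid whenever $\langle\be,u^*\rangle>0$; in the complementary case $\langle\be,u^*\rangle=0$ the same estimate forces $U^*_\w(1)u^*=0$, which is the dual analogue of the dichotomy already built into (A3) (cf.\ Remark \ref{rm:dichotomy}).

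Finally I would check that $\varkappa^*$ is admissible: $\w\mapsto\theta_{-1}\w$ is $(\mathfrak F,\mathfrak F)$\nobreakdash-measurable, hence $\varkappa^*$ is $(\mathfrak F,\mathfrak B(\R))$\nobreakdash-measurable with values in $[1,\infty)$; and from $\ln\varkappa^*=2\,(\ln\varkappa)\circ\theta_{-1}$ together with $\lnplus(2t)\le\ln 2+\lnplus t$ one gets $\lnplus\ln\varkappa^*\le\ln 2+(\lnplus\ln\varkappa)\circ\theta_{-1}$, which lies in $L_1\OFP$ since $\theta_{-1}$ is $\PP$\nobreakdash-preserving and $\lnplus\ln\varkappa\in L_1\OFP$. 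The one step that is not pure bookkeeping is the construction of the positive norming functional $\be^*$; I expect that (standard as it is) to be the main point, with the rest being careful tracking of the order, the duality \eqref{dual-definition}, and the shift of base point between $\w$ and $\theta_{-1}\w$.
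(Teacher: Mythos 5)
Your proposal takes essentially the same route as the paper's proof: fix $\be^*\in(X^*)^+$ with $\langle\be,\be^*\rangle=1$, $\n{\be^*}=1$; push~\eqref{focusing} at the shifted point $\theta_{-1}\w$ through the duality~\eqref{dual-definition} against $\be^*$ and $u^*$; then eliminate $\beta(\theta_{-1}\w,\cdot)$ to read off exactly the same $\beta^*(\w,u^*)=\langle\be,u^*\rangle/\varkappa(\theta_{-1}\w)$ and $\varkappa^*(\w)=\varkappa^2(\theta_{-1}\w)$. The only surface differences are cosmetic: you construct $\be^*$ by a separation argument, while the paper cites \cite[Theorem~5.4]{Schaef}; you pair both sides of~\eqref{focusing} against a generic $v\in X^+$ and handle $U_{\theta_{-1}\w}(1)v=0$ explicitly, while the paper works directly with $\langle U_{\theta_{-1}\w}(1)u,\cdot\rangle$ and notes in passing that the resulting inequality still holds when $U_{\theta_{-1}\w}(1)u=0$.

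One caveat about your closing remark on $\langle\be,u^*\rangle=0$. You correctly observe that for such $u^*$ your estimate forces $U^*_\w(1)u^*=0$, but you then characterize this as an acceptable ``dual dichotomy.'' Condition~(A3)* as written in the paper has no dichotomy: it demands $\beta^*(\w,u^*)>0$ for \emph{every} nonzero $u^*\in(X^*)^+$, and if $U^*_\w(1)u^*=0$ while $U^*_\w(1)\be^*>0$, the left inequality in~\eqref{focusing*} cannot hold with any $\beta^*>0$. So the case you flagged is not resolved by appealing to a dichotomy; it would actually break the conclusion unless $\langle\be,u^*\rangle>0$ for all nonzero $u^*\in(X^*)^+$, i.e.\ unless $\be$ is a quasi-interior point of $X^+$. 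Note, however, that the paper's own proof defines $\beta^*$ by the identical formula without addressing this case at all, so this is a latent point in the paper's argument as well, not a defect specific to your write-up; in the delay application of Section~\ref{sec-delay} the chosen $\be$ is quasi-interior, so the issue does not arise there. In short: your proof matches the paper's; your instinct that $\langle\be,u^*\rangle=0$ needs attention is right, but the fix is an additional hypothesis on $\be$, not a dichotomy in~(A3)*.
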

\begin{proof}
From~\eqref{focusing} we have
\begin{equation*}
\beta(\theta_{-1}\w,u)\,\be\leq U_{\theta_{-1}\w}(1)\,u\leq \varkappa(\theta_{-1}\w)\,\beta(\theta_{-1}\w,u)\,\be
\end{equation*}
provided that $U_{\theta_{-1}\w}(1) \, u \neq 0$. Fix any ${\bf e^*} \in (X^*)^+$ with $\langle \be,\be^* \rangle = 1$ and $\| \be^*\| = 1$ (the existence of such an $\mathbf{e}^{*}$ follows from \cite[Theorem~5.4]{Schaef}).  Hence,
\begin{equation*}
\beta(\theta_{-1}\w,u)\,\langle \be,\be^*\rangle\leq \langle U_{\theta_{-1}\w}(1)\,u, \be^*\rangle=\langle u, U^*_\w(1)\,\be^*\rangle
\end{equation*}
(in~particular, $U_\w^*(1)\, \be^* \ne 0$) and
\begin{equation*}
\langle U_{\theta_{-1}\w}(1)\,u,u^*\rangle \leq \varkappa(\theta_{-1}\w)\,\beta(\theta_{-1}\w,u)\,\langle \be, u^*\rangle\,\langle \be, \be^*\rangle\,.
\end{equation*}
Combining these two inequalities and denoting
\[\beta^*(\w,u^*)=\frac{\langle\be, u^*\rangle}{\varkappa(\theta_{-1}\w)}\;\text{ and }\; \varkappa^*(\w)=\varkappa^2(\theta_{-1}\w)\,,
\]
we obtain
\begin{equation*}
\langle U_{\theta_{-1}\w}(1)\,u,u^*\rangle \leq \varkappa^*(\w)\,\beta^*(\w,u^*)\langle u, U^*_\w(1)\,\be^*\rangle\,,
\end{equation*}
which also holds if $ U_{\theta_{-1}\w}(1)\,u=0$, and then
\begin{equation}\label{derecha}
U_\w^*(1)\,u^*\leq \varkappa^*(\w)\,\beta^*(\w,u^*)\,U^*_\w(1)\,{\bf{e^*}}\,.
\end{equation}
Analogously, from~\eqref{focusing} and $\langle\be,\be^*\rangle=1$ we deduce that
\begin{align*}
\langle U_{\theta_{-1}\w}(1)\,u, u^*\rangle & \geq \beta(\theta_{-1}\w,u)\,\langle\be,u^*\rangle = \frac{\beta(\theta_{-1}\w,u)}{\varkappa(\theta_{-1}\w)}\,\langle \be,u^*\rangle\,\varkappa(\theta_{-1}\w)\\
& \geq  \frac{\langle\be,u^*\rangle}{\varkappa(\theta_{-1}\w)}\,\langle U_{\theta_{-1}\w}(1)\,u,\be^*\rangle=\beta^*(\w,u^*)\,\langle U_{\theta_{-1}\w}(1)\,u,\be^*\rangle\,,
\end{align*}
that is,
\begin{equation*}
\beta^*(\w,u)\,U_\w^*(1)\,{\bf e^*}\leq U_\w^*(1)\,u^*\,,
\end{equation*}
which together with~\eqref{derecha} finishes the proof.
\end{proof}
A simple consequence is the following. As usual, we denote by $\lfloor t \rfloor$ the integer part of the real number $t$.
\begin{lema}
\label{lm:dual-injective}
Assume~\textup{(A3)}. Then for any $\omega \in \Omega$, any $t \ge 0$ and any nonzero $u^{*} \in (X^{*})^{+}$ there holds $U^{*}_{\omega}(t) \, u^{*} \ne 0$.
\end{lema}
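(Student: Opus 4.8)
The plan is to read the conclusion off the dual focusing condition. Since (A3) is assumed, Proposition~\ref{A3->A3*} supplies (A3)*: for every $\w\in\Omega$ we have $U^{*}_{\w}(1)\,\be^{*}\ne 0$, and for every nonzero $u^{*}\in(X^{*})^{+}$ the left-hand inequality in~\eqref{focusing*} gives $\beta^{*}(\w,u^{*})\,U^{*}_{\w}(1)\,\be^{*}\le U^{*}_{\w}(1)\,u^{*}$ with $\beta^{*}(\w,u^{*})>0$. By (A2)* the vector $U^{*}_{\w}(1)\,\be^{*}$ lies in $(X^{*})^{+}$, and it is nonzero, so $\beta^{*}(\w,u^{*})\,U^{*}_{\w}(1)\,\be^{*}$ is a nonzero element of $(X^{*})^{+}$. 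Were $U^{*}_{\w}(1)\,u^{*}=0$, this nonzero positive vector would satisfy $\le 0$, i.e.\ it would belong to $(X^{*})^{+}\cap(-(X^{*})^{+})=\{0\}$ (the dual cone being a proper cone in our setting), a contradiction. Hence $U^{*}_{\w}(1)\,u^{*}\ne 0$ for every $\w\in\Omega$ and every nonzero $u^{*}\in(X^{*})^{+}$.

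Next I would promote this to all positive integer times by induction on $n\ge 1$, the case $n=1$ being what was just shown. For the inductive step, the dual cocycle identity~\eqref{eq-cocycle-dual} applied with $t=n$, $s=1$ yields $U^{*}_{\w}(n+1)\,u^{*}=U^{*}_{\theta_{-n}\w}(1)\bigl(U^{*}_{\w}(n)\,u^{*}\bigr)$; by the induction hypothesis $U^{*}_{\w}(n)\,u^{*}$ is nonzero, and by (A2)* it belongs to $(X^{*})^{+}$, so applying the $n=1$ case at the base point $\theta_{-n}\w$ gives $U^{*}_{\w}(n+1)\,u^{*}\ne 0$.

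Finally, for an arbitrary $t\ge 0$: if $t=0$ then $U^{*}_{\w}(0)=\mathrm{Id}_{X^{*}}$ and there is nothing to prove, so assume $t>0$ and set $n:=\lfloor t\rfloor+1$, an integer with $n\ge 1$ and $n\ge t$. Then~\eqref{eq-cocycle-dual}, with the pair $(t,\,n-t)$, gives $U^{*}_{\w}(n)\,u^{*}=U^{*}_{\theta_{-t}\w}(n-t)\bigl(U^{*}_{\w}(t)\,u^{*}\bigr)$; since the left-hand side is nonzero by the previous step, $U^{*}_{\w}(t)\,u^{*}$ cannot vanish, which finishes the proof. I do not expect a genuine obstacle here: the only point requiring care is keeping the direction of the dual cocycle straight, and the argument is essentially an unwinding of (A3)* combined with the fact that $(X^{*})^{+}$ is a proper cone.
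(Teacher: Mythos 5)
Your proof is correct and follows the same route as the paper: establish the case $t=1$ from the left-hand inequality of (A3)* via Proposition~\ref{A3->A3*}, propagate to all $n\in\N$ by induction with the dual cocycle identity~\eqref{eq-cocycle-dual}, then deduce the general case by writing $U^{*}_{\w}(\lfloor t\rfloor+1)\,u^{*}=U^{*}_{\theta_{-t}\w}(\lfloor t\rfloor+1-t)\,U^{*}_{\w}(t)\,u^{*}$ and arguing by contradiction. You have merely spelled out the base case and induction step that the paper compresses into the phrase ``by induction.''
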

\begin{proof}
It follows from Proposition~\ref{A3->A3*}, by induction, that $U^{*}_{\omega}(n) \, u^{*} \ne 0$ for  each $n\in\N$.  Suppose to the contrary that $U^{*}_{\omega}(t) \, u^{*} = 0$ for some $\omega \in \Omega$, $t > 0$ and $u^{*} \in (X^{*})^{+}$.  Then, by~\eqref{eq-cocycle-dual}, $U^{*}_{\omega}(\lfloor t \rfloor + 1) \, u^{*} = U^{*}_{\theta_{-t}\omega}(\lfloor t \rfloor + 1 - t) \, U^{*}_{\omega}(t) \, u^{*} = 0$, a contradiction.
\end{proof}
\begin{nota}\label{timeT}
We can replace time 1 with some nonzero $T\in\R^+$ in (A1), (A3), and  (A1)*, (A3)*.
\end{nota}
\section{Generalized principal Floquet subspaces}\label{sec-GPFS}
The main result of the paper is the proof of the existence, under the new focusing assumption (A3), of a new version of the concept of generalized exponential separation for  a \mlsps.
We will preserve the structure and follow the arguments in~\cite{MiShPart1}, just introducing
the modifications which are required in this new situation. This section  will be devoted to the proof of the existence of a family of generalized principal Floquet subspaces.

\smallskip
As stated before, $X$ is an ordered separable Banach space such that $X^{*}$ is separable, with positive cone $X^+$ normal and reproducing, and recall that, from Lemma~\ref{A3->A3*}, once (A3) is assumed, assumption (A3)* holds.
\begin{defi}
\label{generalized-floquet-space}
A family of one\nobreakdash-\hspace{0pt}dimensional subspaces $\{\widetilde{E}(\w)\}_{\w \in \widetilde{\Omega}}$ of $X$ is called a family of {\em generalized principal Floquet subspaces} of $\Phi = ((U_\w(t)), (\theta_t))$ if $\widetilde{\Omega} \subset \Omega$ is invariant, $\PP(\widetilde{\Omega}) = 1$, and
\begin{itemize}
\item[{\rm (i)}]
$\widetilde{E}(\w) = \spanned{\{w(\w)\}}$ with $w \colon \widetilde{\Omega} \to X^+ \setminus \{0\}$ being $(\mathfrak{F}, \mathfrak{B}(X))$\nobreakdash-\hspace{0pt}measurable,
\item[{\rm (ii)}]
$U_{\w}(t) \widetilde{E}(\w) = \widetilde{E}(\theta_{t}\w)$, for any $\w \in \widetilde{\Omega}$ and any $t > 0$,
\item[{\rm (iii)}]
there is $\widetilde{\lambda} \in [-\infty, \infty)$ such that
\begin{equation*}
\widetilde{\lambda} = \lim_{t\to\infty} \frac{\ln{\n{U_\w(t)\,w(\w)}}}{t} \quad \text{ for each } \w \in \widetilde{\Omega},
\end{equation*}
\item[{\rm (iv)}] for each $\w\in\widetilde\Omega$ with $U_\w(1)\,u\neq 0$
\begin{equation*}
\limsup_{t\to\infty} \frac{\ln{\n{U_\w(t)\,u}}}{t} \le \widetilde{\lambda} \,.
\end{equation*}
\end{itemize}
$\widetilde{\lambda}$ is called the {\em generalized principal Lyapunov exponent} of $\Phi$ associated to the generalized principal Floquet subspaces $\{\widetilde{E}(\w)\}_{\w\in\widetilde{\Omega}}$.
\end{defi}
\par\smallskip
We recall the definitions of oscillation ratio, Birkhoff contraction ratio and projective diameter, needed in the proofs of our main theorems. See Definition~\ref{ProjectMetric} and Subsection~\ref{subsec-OBS} for previous definitions and notations.
\begin{defi}\label{OscilRatio}
Assume (A2) and let $\w\in\Omega$.
\begin{itemize}
\item[(1)] The {\em oscillation ratio\/} of $U_\w(1)$ is defined as
\[p(\w):= \sup_{\substack{u,\,v\in X^+\\ u\sim v\,,\; u\neq \alpha\,v}}\frac{\textit{osc}\,(U_\w(1))\,u/U_\w(1)\,v)}{\textit{osc}\,(u/v)}\,.\]
\item[(2)]  The {\em Birkhoff contraction ratio\/} of $U_\w(1)$ is defined as
\[q(\w):=\sup_{\substack{u,\,v\in X^+\\ u\sim v\,,\; u\neq \alpha\,v}} \frac{d(U_\w(1)\,u,U_\w(1)\,v)}{d(u,v)}\,.\]
\item[(3)]
The {\em projective diameter} of $U_\w(1)$ is defined as
\[\tau(\w):=\sup_{\substack{u,\,v\in X^+ \\ U_\w(1)\,u \sim U_\w(1)\,v}} d(U_\w(1)\,u,U_\w(1)\,v)\,.\]
\end{itemize}
The functions $p^*$, $q^*$ and $\tau^*$ for the dual $\Phi^*$ are defined in an analogous way.
\end{defi}
The following lemmas, proved in~\cite{MiShPart1} as Lemma 4.10 and 4.11 for different focusing conditions, hold for the new focusing conditions (A3) and (A3)*  with small changes, and hence their proofs are omitted. Recall that $\be\in X^+$ and $\be^*\in (X^*)^+$ are the positive vectors of condition (A3) and (A3)*, respectively, and notice that it is easy to check that
\begin{equation}\label{tau}
\tau(\w)=\sup_{\substack{u,\,v\in X^+ \\ U_\w(1)\,u \sim U_\w(1)\,v\\ U_\w(1)\,u\sim \be}} d(U_\w(1)\,u,U_\w(1)\,v)\,.
\end{equation}
\begin{lema}
\label{lemma4.10}
Under assumption {\rm (A3)}, for each $\w\in\Omega$ and each $u\in X^+$, such that $U_\w(1)\,u\sim \be$ and $u^*\in (X^*)^+\setminus \{0\}$ there holds  $U^*_\w(1)\,u^*\sim U^*_\w(1)\,\be^*$ and
\[d(U_\w(1)\,u,\be)\leq \ln \varkappa(\w), \quad d(U^*_\w(1)\,u^*,U^*_\w(1)\,\be^*)\leq \ln\varkappa^*(\w)\,. \]
Consequently, $\tau(\w)\leq 2\,\ln\varkappa(\w)$ and $\tau^*(\w)\leq 2\,\ln\varkappa^*(\w)$ for any $\w\in\Omega$.
\end{lema}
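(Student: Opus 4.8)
The plan is to read off both projective-distance bounds directly from the focusing inequalities \eqref{focusing} and \eqref{focusing*}, and then to deduce the bounds on the projective diameters by a triangle-inequality argument for the Hilbert projective metric.

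First I would establish the primal estimate. Since $U_\w(1)\,u \sim \be$ by hypothesis, in particular $U_\w(1)\,u \ne 0$, so (A3) provides a number $\beta := \beta(\w,u) > 0$ with $\beta\,\be \le U_\w(1)\,u \le \varkappa(\w)\,\beta\,\be$. By Definition~\ref{ProjectMetric} this forces $m(U_\w(1)\,u/\be) \ge \beta$ and $M(U_\w(1)\,u/\be) \le \varkappa(\w)\,\beta$, hence
\[
d(U_\w(1)\,u,\be) = \ln\frac{M(U_\w(1)\,u/\be)}{m(U_\w(1)\,u/\be)} \le \ln\varkappa(\w).
\]
For the dual estimate I would invoke the implication (A3) $\Rightarrow$ (A3)* (Proposition~\ref{A3->A3*}), together with the fact that $U^*_\w(1)\,\be^* \ne 0$ (which is part of (A3)*, or follows from Lemma~\ref{lm:dual-injective}). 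Then \eqref{focusing*}, applied to the nonzero $u^* \in (X^*)^+$, yields $\beta^* := \beta^*(\w,u^*) > 0$ with $\beta^*\,U^*_\w(1)\,\be^* \le U^*_\w(1)\,u^* \le \varkappa^*(\w)\,\beta^*\,U^*_\w(1)\,\be^*$; in particular $U^*_\w(1)\,u^* \sim U^*_\w(1)\,\be^*$, and exactly as in the primal case one obtains $d(U^*_\w(1)\,u^*,U^*_\w(1)\,\be^*) \le \ln\varkappa^*(\w)$.

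For the bound on $\tau(\w)$ I would argue as follows. If $u,v \in X^+$ are such that $U_\w(1)\,u \sim U_\w(1)\,v$, then both images are nonzero positive vectors, so by the dichotomy of Remark~\ref{rm:dichotomy} and \eqref{focusing} each of them is comparable with $\be$; this is precisely the reduction recorded in \eqref{tau}. Using the triangle inequality and symmetry of the projective metric, together with the primal estimate applied to $u$ and to $v$,
\[
d(U_\w(1)\,u,U_\w(1)\,v) \le d(U_\w(1)\,u,\be) + d(\be,U_\w(1)\,v) \le 2\,\ln\varkappa(\w),
\]
whence $\tau(\w) \le 2\,\ln\varkappa(\w)$. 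The bound $\tau^*(\w) \le 2\,\ln\varkappa^*(\w)$ follows in the same way, now comparing each $U^*_\w(1)\,u^*$ with the common vector $U^*_\w(1)\,\be^*$ via the dual estimate.

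There is no real obstacle here: the argument is a direct unwinding of the definitions of $m$, $M$, $d$ and of the focusing inequalities. The only points deserving care are the reduction of the supremum defining $\tau(\w)$ to pairs whose images are comparable with $\be$ (where \eqref{tau} and Remark~\ref{rm:dichotomy} are used), the verification that $U^*_\w(1)\,\be^* \ne 0$ so that the dual projective distances are well defined, and the use of the triangle inequality for the Hilbert projective metric, which is standard. For these reasons the detailed proof, which parallels \cite[Lemma~4.10]{MiShPart1}, is omitted.
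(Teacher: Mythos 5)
Your proof is correct and follows exactly the route the paper intends (the paper omits the proof, pointing to \cite[Lemma~4.10]{MiShPart1} with small changes): read $m$ and $M$ off the focusing inequalities \eqref{focusing} and \eqref{focusing*} to get the two projective-distance bounds, then pass to the diameter bounds via the triangle inequality for the Hilbert projective metric, using \eqref{tau} (for $\tau$) and Lemma~\ref{lm:dual-injective} together with \eqref{focusing*} (for $\tau^*$) to anchor every comparable image at $\be$ and $U^*_\w(1)\,\be^*$, respectively. No gaps.
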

\begin{lema} Under assumption {\rm (A3)}, for each $\w\in\Omega$
\[\tau(\w)<\infty\,,\quad \tau^*(\w)<\infty\,,\quad \text{and}\]
\[ p(\w)=q(\w)=\tanh\frac{\tau(\w)}{4}\;\; (<1)\,,\quad p^*(\w)=q^*(\w)=\tanh\frac{\tau^*(\w)}{4}\;\; (<1)\,.\]
\end{lema}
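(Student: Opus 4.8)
The plan is to separate the finiteness claims from the Birkhoff-type identities. Finiteness is already available: Lemma~\ref{lemma4.10} gives $\tau(\w)\le 2\ln\varkappa(\w)<\infty$ and $\tau^{*}(\w)\le 2\ln\varkappa^{*}(\w)<\infty$ for every $\w\in\Omega$, so that $\tanh(\tau(\w)/4)$ and $\tanh(\tau^{*}(\w)/4)$ are automatically in $[0,1)$ and only the equalities $p(\w)=q(\w)=\tanh(\tau(\w)/4)$ and $p^{*}(\w)=q^{*}(\w)=\tanh(\tau^{*}(\w)/4)$ remain. Since (A3)* holds whenever (A3) does (Proposition~\ref{A3->A3*}) and the dual halves of Lemma~\ref{lemma4.10} are at hand, I would prove the statement for $\Phi$ and obtain the one for $\Phi^{*}$ by running the identical argument on $\Phi^{*}$. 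So fix $\w$, put $L:=U_{\w}(1)$, and recall that by Remark~\ref{rm:dichotomy} and~\eqref{focusing} every nonzero element of $L(X^{+})$ is comparable to $\be$; in particular any two nonzero images are mutually comparable. If $\tau(\w)=0$ then all nonzero images are proportional, so $p(\w)=q(\w)=0=\tanh(\tau(\w)/4)$, and I may assume $\tau(\w)>0$.

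For the upper bounds I would run Birkhoff's canonical-decomposition argument. Let $u,v\in X^{+}$ be comparable with $u\ne\alpha v$ and $Lu\ne0$ (admissible pairs with $Lu=0$ contribute nothing to the suprema defining $p(\w)$, $q(\w)$), set $m:=m(u/v)$, $M:=M(u/v)$, and write $a:=u-mv$, $b:=Mv-u$, both in $X^{+}$; then $(M-m)u=Ma+mb$ and $(M-m)v=a+b$, so $Lu$, $Lv$ are nonnegative combinations of $La$, $Lb$ (if one of $La$, $Lb$ vanishes then $Lu$ is proportional to $Lv$ and the quotient is $0$, so assume $La,Lb\ne0$, whence $La\sim Lb$). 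By~\eqref{tau}, $d(La,Lb)\le\tau(\w)$; writing $\alpha:=m(La/Lb)$, $\beta:=M(La/Lb)$ (so $\beta/\alpha=e^{d(La,Lb)}\le e^{\tau(\w)}$) and using $\alpha Lb\le La\le\beta Lb$ one gets
\[
m(Lu/Lv)\ge\frac{m+M\alpha}{1+\alpha},\qquad M(Lu/Lv)\le\frac{m+M\beta}{1+\beta}.
\]
Subtracting and dividing by $\textit{osc}(u/v)=M-m$ gives $\textit{osc}(Lu/Lv)/\textit{osc}(u/v)\le(\beta-\alpha)/[(1+\alpha)(1+\beta)]$, whose supremum over $\alpha>0$ subject to $\beta/\alpha\le e^{\tau(\w)}$ is $\tanh(\tau(\w)/4)$; hence $p(\w)\le\tanh(\tau(\w)/4)$. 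Taking instead the quotient of the two displayed bounds, passing to logarithms and optimizing in the same fashion yields $d(Lu,Lv)\le\tanh(\tau(\w)/4)\,d(u,v)$, i.e.\ $q(\w)\le\tanh(\tau(\w)/4)$.

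For the reverse inequalities I would test on a one-parameter family approaching an extremal configuration for $\tau(\w)$. Given $\ep>0$, choose $x_{0},y_{0}\in X^{+}$ with $Lx_{0},Ly_{0}\ne0$ and $\sigma:=d(Lx_{0},Ly_{0})>\tau(\w)-\ep$; then $x_{0}$ and $y_{0}$ are linearly independent, and after replacing $y_{0}$ by a suitable positive multiple I may assume $m(Lx_{0}/Ly_{0})=m(Ly_{0}/Lx_{0})=e^{-\sigma/2}$. For $s\in(0,1)$ put $u_{s}:=x_{0}+sy_{0}$ and $v:=x_{0}+y_{0}$; since $sv\le u_{s}\le v$ one has $u_{s}\sim v$, and $u_{s}\ne\alpha v$. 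An explicit evaluation of $m$ and $M$ on these vectors — valid whether or not $x_{0}\sim y_{0}$ — yields $\textit{osc}(u_{s}/v)\le1-s$ and $\textit{osc}(Lu_{s}/Lv)=(1-s)\tanh(\sigma/4)$, so $\textit{osc}(Lu_{s}/Lv)/\textit{osc}(u_{s}/v)\ge\tanh(\sigma/4)$; letting $\ep\downarrow0$ gives $p(\w)\ge\tanh(\tau(\w)/4)$. Computing $d(Lu_{s},Lv)$ and $d(u_{s},v)$ along the same family and letting $s\uparrow1$ gives $\lim_{s\uparrow1}d(Lu_{s},Lv)/d(u_{s},v)\ge\tanh(\sigma/4)$, whence $q(\w)\ge\tanh(\tau(\w)/4)$. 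Combining with the upper bounds, $p(\w)=q(\w)=\tanh(\tau(\w)/4)<1$, and the statement for $\Phi^{*}$ follows as indicated.

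The main difficulty is organizational rather than conceptual: one has to carry out the two elementary maximizations and the explicit computations of $m(Lu/Lv)$, $M(Lu/Lv)$ and of the corresponding quantities for $u_{s}$, $v$ (together with their projective-distance analogues), while keeping track of the degenerate subcases ($Lu=0$, or some of the vectors involved failing to be comparable). Conceptually the whole lemma is Birkhoff's projective contraction theorem, and since assumptions (A3) and (A3)* enter only through the finiteness of the projective diameters supplied by Lemma~\ref{lemma4.10}, one may equally well cite the proof of \cite[Lemma~4.11]{MiShPart1} with the present hypotheses in place of those used there.
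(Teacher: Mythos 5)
Your proposal is correct and takes essentially the same approach as the paper: the paper omits the proof, referring to Lemma 4.11 of \cite{MiShPart1} ``with small changes,'' and your argument is precisely the Birkhoff projective-contraction computation underlying that lemma, with the required small change — excluding degenerate pairs $u\sim v$ with $U_\w(1)\,u=0$, which the weaker (A3) now allows — handled correctly. Your closing observation that (A3) enters only through the finiteness of $\tau(\w)$ from Lemma~\ref{lemma4.10} together with the dichotomy of Remark~\ref{rm:dichotomy} is exactly the point the paper is relying on.
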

As in Lemma 4.13 of~\cite{MiShPart1}, taking into account~\eqref{tau} and changing the dense countable subsets of the proof by $(v_j+\be/n)_{j,n}$,  the following result is proved.
\begin{lema}
Under assumption {\rm (A3)}, the functions
\begin{align*}
\bigl[ \, \Omega \ni \omega \mapsto \tau(\w)\in\R \,\bigr],&\quad \bigl[ \, \Omega \ni \omega \mapsto \tau^*(\w)\in\R \,\bigr]\\
\bigl[ \, \Omega \ni \omega \mapsto p(\w)\in\R\,\bigr],& \quad \bigl[ \, \Omega \ni \omega \mapsto p^*(\w)\in\R \,\bigr]\\
\bigl[ \, \Omega \ni \omega \mapsto q(\w)\in\R\,\bigr],& \quad \bigl[ \, \Omega \ni \omega \mapsto q^*(\w)\in\R \,\bigr]
\end{align*}
are $(\mathfrak{F},\mathfrak{B}(\R))$\nbd-measurable.
\end{lema}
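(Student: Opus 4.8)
The plan is to realize each of the six functions either as a countable supremum of $(\mathfrak{F},\mathfrak{B}(\R))$-measurable functions of $\w$, or as the composition of such a function with a continuous map, following the scheme of \cite[Lemma~4.13]{MiShPart1} with the modification announced above. It is enough to prove that $\w\mapsto\tau(\w)$ and $\w\mapsto\tau^{*}(\w)$ are measurable: by the preceding lemma we have $p(\w)=q(\w)=\tanh(\tau(\w)/4)$ and $p^{*}(\w)=q^{*}(\w)=\tanh(\tau^{*}(\w)/4)$ for every $\w\in\Omega$, so once $\tau$ (resp.\ $\tau^{*}$) is known to be measurable, $p$ and $q$ (resp.\ $p^{*}$ and $q^{*}$) are measurable, being compositions of $\tau$ (resp.\ $\tau^{*}$) with the continuous function $t\mapsto\tanh(t/4)$.

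For $\tau$, I would fix a countable dense subset $\{v_{j}\}_{j\in\N}$ of $X^{+}$ (which exists since $X$, hence $X^{+}$, is separable) and set $w_{j,n}:=v_{j}+\be/n$ for $j,n\in\N$. The decisive feature of this choice, and the one place where (A3) rather than (A3-O) is used, is that $w_{j,n}\ge\be/n$ is a nonzero positive vector, so by (A2) and the condition $U_{\w}(1)\,\be\neq 0$ from (A3) we get $U_{\w}(1)\,w_{j,n}\ge(1/n)\,U_{\w}(1)\,\be\neq 0$, whence, by the second alternative of (A3), $U_{\w}(1)\,w_{j,n}\sim\be$ for \emph{every} $\w\in\Omega$ and all $j,n$. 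I then claim that, for every $\w\in\Omega$,
\begin{equation*}
\tau(\w)=\sup_{j,k,n,m\in\N}d\bigl(U_{\w}(1)\,w_{j,n},\,U_{\w}(1)\,w_{k,m}\bigr).
\end{equation*}
The inequality ``$\ge$'' is immediate from \eqref{tau}, since every pair $(w_{j,n},w_{k,m})$ is admissible there. For ``$\le$'', take $u,v\in X^{+}$ with $U_{\w}(1)\,u\sim U_{\w}(1)\,v\sim\be$, choose $w_{j_{\ell},n_{\ell}}\to u$ and $w_{k_{\ell},m_{\ell}}\to v$ in norm, and note that by continuity of $U_{\w}(1)$ the images converge to $U_{\w}(1)\,u$, $U_{\w}(1)\,v$, all these vectors lying in the single component $C_{\be}$. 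Because $X^{+}$ is closed, on $C_{\be}\times C_{\be}$ the map $(a,b)\mapsto m(a/b)$ is upper semicontinuous and $(a,b)\mapsto M(a/b)$ is lower semicontinuous, hence $d(a,b)=\ln M(a/b)-\ln m(a/b)$ is lower semicontinuous; therefore $d(U_{\w}(1)\,u,U_{\w}(1)\,v)\le\liminf_{\ell}d(U_{\w}(1)\,w_{j_{\ell},n_{\ell}},U_{\w}(1)\,w_{k_{\ell},m_{\ell}})$, and taking the supremum over all such $u,v$ and invoking \eqref{tau} yields ``$\le$''.

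It then remains to check that each function $\w\mapsto d(U_{\w}(1)\,w_{j,n},U_{\w}(1)\,w_{k,m})$ is measurable, for then $\tau$ is measurable as a countable supremum of measurable functions. For fixed $j,n$, $\w\mapsto U_{\w}(1)\,w_{j,n}$ is $(\mathfrak{F},\mathfrak{B}(X))$-measurable by the joint measurability of $\Phi$. Writing $a(\w):=U_{\w}(1)\,w_{j,n}$ and $b(\w):=U_{\w}(1)\,w_{k,m}$, the set $\{\alpha\in\R\colon \alpha\,b(\w)-a(\w)\in X^{+}\}$ is the closed half-line $[M(a(\w)/b(\w)),\infty)$ (recall $a(\w)\sim b(\w)\sim\be$), so $M(a(\w)/b(\w))=\inf\{\alpha\in\mathbb{Q}\colon \alpha>0,\ \alpha\,b(\w)-a(\w)\in X^{+}\}$; since $X^{+}$ is closed and $a,b$ are measurable, each set $\{\w\colon \alpha\,b(\w)-a(\w)\in X^{+}\}$ lies in $\mathfrak{F}$, so $\w\mapsto M(a(\w)/b(\w))$ is measurable, and likewise $\w\mapsto m(a(\w)/b(\w))$; hence $\w\mapsto d(a(\w),b(\w))$ is measurable. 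The measurability of $\tau^{*}$ is obtained in the same way, working with the dual semiflow $\Phi^{*}$, with (A3)* and $\be^{*}$ in place of (A3) and $\be$, and with $w^{*}_{j,n}:=v^{*}_{j}+\be^{*}/n$ drawn from a countable dense subset of $(X^{*})^{+}$; the fixed component $C_{\be}$ is now replaced by $C_{U^{*}_{\w}(1)\,\be^{*}}$, which by (A3)* and Lemma~\ref{lm:dual-injective} contains $U^{*}_{\w}(1)\,u^{*}$ for every nonzero $u^{*}\in(X^{*})^{+}$ and every $\w$, in particular $U^{*}_{\w}(1)\,w^{*}_{j,n}$ for every $\w$.

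The only genuinely new point compared with \cite[Lemma~4.13]{MiShPart1}, and the one I expect to need the most care, is precisely this verification that under the \emph{weakened} focusing hypothesis the shifted vectors $w_{j,n}$ (and $w^{*}_{j,n}$) are still carried by $U_{\w}(1)$ (resp.\ $U^{*}_{\w}(1)$) into a \emph{single} component for all $\w$, which is what makes the lower semicontinuity of $d$ usable in the ``$\le$'' half of the countable-supremum identity. The remaining ingredients --- the semicontinuity of $m$ and $M$ coming from the closedness of the cone, and the rational approximation giving measurability of $\w\mapsto M(a(\w)/b(\w))$ --- are routine and as in \emph{loc.\ cit.}
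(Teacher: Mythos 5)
Your proposal is correct and follows essentially the same route as the paper: the paper itself states that the result is proved as in \cite[Lemma~4.13]{MiShPart1}, using the reformulation~\eqref{tau} and replacing the countable dense subset of $X^{+}$ by the shifted vectors $(v_{j}+\be/n)_{j,n}$, which is exactly what you do (and your identification of this shift as the one genuinely new point is the right reading of the situation). The supporting details you fill in --- $U_\w(1)(v_j+\be/n)\ge(1/n)\,U_\w(1)\be\neq 0$ forcing membership in $C_\be$, lower semicontinuity of $d$ via closedness and normality of the cone, and rational approximation to get measurability of $\w\mapsto M(a(\w)/b(\w))$ and $\w\mapsto m(a(\w)/b(\w))$ --- are all sound and are the standard ingredients the paper is implicitly invoking.
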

Next we consider the set of positive vectors
\begin{equation*}
\Sigma := \{u \in C_\be \mid \n{u} = 1 \} = C_\be \cap S_1(X^+)\,,
\end{equation*}
where $C_\be$ denotes the component of $\be$ defined in~\eqref{component}.
\begin{lema}
\label{lm:Sigma-complete}
Assume~\textup{(A3)}.  Then $d$ is a metric on $\Sigma$, and $(\Sigma, d)$ is a complete metric space.
\end{lema}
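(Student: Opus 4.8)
The plan is to check the metric axioms directly from Definition~\ref{ProjectMetric}, and then to obtain completeness by transporting a $d$-Cauchy sequence to a $\n{\cdot}$-Cauchy sequence via Lemma~\ref{lm:projective-vs-norm}, passing to the norm limit, and finally verifying that this limit lies in $\Sigma$ and is also the $d$-limit.

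First I would record that $\sim$ is an equivalence relation — reflexivity and symmetry are immediate, and transitivity is an elementary manipulation of the defining inequalities — so that all elements of $\Sigma \subset C_\be$ are pairwise comparable; consequently, for $u,v \in \Sigma$ both $m(u/v) > 0$ and $M(u/v) < \infty$ hold, and $d(u,v) \in [0,\infty)$ is well defined. Symmetry of $d$ follows from $m(u/v) = 1/M(v/u)$ and $M(u/v) = 1/m(v/u)$; the triangle inequality follows from the super/submultiplicativity $m(u/w) \ge m(u/v)\,m(v/w)$ and $M(u/w) \le M(u/v)\,M(v/w)$ of the bounds. Finally, if $d(u,v) = 0$ then $m(u/v) = M(u/v) =: \alpha > 0$, so $\alpha v \le u \le \alpha v$, i.e.\ $u = \alpha v$; taking norms and using $\n{u} = \n{v} = 1$ forces $\alpha = 1$, hence $u = v$. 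This establishes that $d$ is a metric on $\Sigma$.

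For completeness, let $(u_n)$ be $d$-Cauchy in $\Sigma$. By Lemma~\ref{lm:projective-vs-norm}, $\n{u_n - u_m} \le 3(e^{d(u_n,u_m)} - 1)$, so $(u_n)$ is Cauchy in $X$ and converges in norm to some $u$; continuity of the norm gives $\n{u} = 1$, and closedness of $X^+$ gives $u \in X^+$. The crucial remaining point is that a small $d$-distance yields two-sided order bounds with constants close to $1$: if $d(u_n, u_m) \le \ep$, then from $m(u_n/u_m)\,u_m \le u_n \le M(u_n/u_m)\,u_m$, the monotonicity of the norm (applied to $0 \le m(u_n/u_m)\,u_m \le u_n$ and to $0 \le u_n \le M(u_n/u_m)\,u_m$, which forces $m(u_n/u_m) \le 1 \le M(u_n/u_m)$) and $\ln(M(u_n/u_m)/m(u_n/u_m)) \le \ep$, one obtains $e^{-\ep}\,u_m \le u_n \le e^{\ep}\,u_m$. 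Now fix $N$ with $d(u_n,u_m) \le \ep$ for all $n,m \ge N$; holding $n \ge N$ fixed and letting $m \to \infty$, using $u_m \to u$ in norm and closedness of $X^+$, I get $e^{-\ep}\,u \le u_n \le e^{\ep}\,u$. Taking $\ep = 1$ and $n = N$ this yields $e^{-1}u \le u_N \le e\,u$, so $u \sim u_N \sim \be$ and hence $u \in \Sigma$; for arbitrary $\ep$ it yields $u_n \sim u$ together with $d(u_n, u) \le 2\ep$ for all $n \ge N$, so $u_n \to u$ in $(\Sigma, d)$. Thus $(\Sigma, d)$ is complete.

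I expect the only genuinely delicate point to be the passage to the limit inside order inequalities, which is exactly where closedness of the cone $X^+$ and monotonicity of the norm — both part of the standing assumptions — are used; the rest is bookkeeping with the quantities $m$, $M$ and $d$, and no property of $\Phi$ beyond the comparability of all elements of $\Sigma$ to $\be$ (a consequence of (A3)) is needed.
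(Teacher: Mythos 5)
Your proof is correct but takes a more self-contained route than the paper's, which simply invokes Eveson's Theorem~1.2.1 after observing that normality of $X^+$, monotonicity of the norm, and the resulting ``regular comparability'' ($m(u/v)\le 1\le M(u/v)$ for $u,v\in\Sigma$) put one in the hypotheses of that theorem. You re-derive the same conclusions from scratch: the metric axioms by direct computation with $m$ and $M$, and completeness by transporting a $d$-Cauchy sequence to a norm-Cauchy sequence via Lemma~\ref{lm:projective-vs-norm}, passing to the norm limit, and then using the two-sided order bounds $e^{-\ep}u_m\le u_n\le e^{\ep}u_m$ (preserved in the limit by closedness of $X^+$) to conclude that the limit lies in $\Sigma$ and is also the $d$-limit. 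Your observation $m(u_n/u_m)\le 1\le M(u_n/u_m)$ is exactly the regular comparability the paper cites. The one step you leave implicit is that the sup defining $m$ and the inf defining $M$ are actually attained — needed both to write $m(u_n/u_m)\,u_m\le u_n\le M(u_n/u_m)\,u_m$ and, in the $d(u,v)=0$ argument, $\alpha v\le u\le\alpha v$; this holds because $X^+$ is closed and is worth a sentence, but is harmless. Your version has the advantage of being independent of the Eveson reference, while the paper's is shorter.
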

\begin{proof}
As stated in Eveson~\cite{Evenson}, since $X^{+}$ is normal (hence almost Archimedean) and the norm on $X$ is monotone, any two vectors $u$, $v$ of $\Sigma\subset C_\be$ are {\em regularly comparable\/}, i.e., they are comparable and $m(u/v)\leq 1\leq M(u/v)$ (see Definition~\ref{ProjectMetric} for notation).  Consequently, from~\cite[Theorem~1.2.1]{Evenson} the projective distance $d$ defined in~\eqref{projdist} is a metric on $\Sigma$ and the metric space $(\Sigma, d)$ is complete.
\end{proof}
\begin{lema}
\label{lm:comparable-with-e}
Under assumption {\rm (A3)},
\[ U_\w(t)\,u\in C_\be\quad \text{ for  each}\;\; t\geq 0\,,\;\w\in\Omega \;\;\text{and} \;\;u\in C_\be\,.\]
\end{lema}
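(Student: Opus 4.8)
The plan is to prove the statement for $t=1$ first and then bootstrap to all $t \ge 0$ using the cocycle property. So fix $\w \in \Omega$ and $u \in C_\be$; I want to show $U_\w(1)\,u \sim \be$. Since $u \in C_\be$ means $u \sim \be$, in particular $u \in X^+ \setminus \{0\}$, so the focusing dichotomy in (A3) applies. First I would rule out the case $U_\w(1)\,u = 0$: because $u \sim \be$, there is $\underline{\alpha} > 0$ with $\underline{\alpha}\,\be \le u$, hence by positivity (A2) we get $0 \le \underline{\alpha}\,U_\w(1)\,\be \le U_\w(1)\,u$; since $U_\w(1)\,\be \ne 0$ by the hypothesis in (A3) and $\underline{\alpha} > 0$, this forces $U_\w(1)\,u \ne 0$. (Alternatively, this is exactly the dichotomy of Remark~\ref{rm:dichotomy}: $U_\w(t)\,\be > 0$ for all $t$, and $u \ge \underline{\alpha}\,\be$ gives $U_\w(t)\,u > 0$.)

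Next, with $U_\w(1)\,u \ne 0$ established, the focusing inequality~\eqref{focusing} yields $\beta(\w,u)\,\be \le U_\w(1)\,u \le \varkappa(\w)\,\beta(\w,u)\,\be$ with $\beta(\w,u) > 0$ and $\varkappa(\w) \in [1,\infty)$. This says precisely that $\underline{\alpha}\,\be \le U_\w(1)\,u \le \overline{\alpha}\,\be$ with $\underline{\alpha} = \beta(\w,u) > 0$ and $\overline{\alpha} = \varkappa(\w)\,\beta(\w,u) > 0$, i.e., $U_\w(1)\,u$ is comparable to $\be$, so $U_\w(1)\,u \in C_\be$. In fact~\eqref{focusing} gives more — it shows the whole image $U_\w(1)(C_\be \cup \{0\})$, and indeed $U_\w(1)(X^+ \setminus \ker U_\w(1))$, lands inside $C_\be$; but for this lemma I only need the restriction to $C_\be$.

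Finally I would extend to arbitrary $t \ge 0$. For $t = 0$ the claim is trivial since $U_\w(0) = \mathrm{Id}_X$. For $0 < t < 1$: applying the case just proved to the time translate, I observe $U_\w(t)\,u$ and then $U_{\theta_t\w}(1-t)\,U_\w(t)\,u = U_\w(1)\,u \in C_\be$ by the cocycle property~\eqref{eq-cocycle}; combined with $U_\w(t)\,u \ge \underline{\alpha}\,U_\w(t)\,\be$ and the fact (Remark~\ref{rm:dichotomy}) that $U_\w(t)\,\be > 0$, one needs that $U_\w(t)\,u$ is sandwiched between multiples of $\be$ — here I would invoke that $U_\w(t)\,u$ with $U_{\theta_t\w}(1-t)(U_\w(t)\,u) \ne 0$ sits in $X^+\setminus\ker$, but the cleanest route is simply: write $t = n + r$ with $n = \lfloor t \rfloor$ and $0 \le r < 1$; by the $t=1$ case and induction via the cocycle property, $U_\w(n)\,u \in C_\be$ whenever $n \ge 1$; if $n \ge 1$ apply the $t<1$-type argument once more, and if $n = 0$ we are in the $0 \le t < 1$ regime. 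To close the $0 \le r < 1$ gap I use that $v := U_{\theta_{n}\w}(r)\,(U_\w(n)\,u)$ satisfies $U_{\theta_{n+r}\w}(1-r)\,v = U_\w(n+1)\,u \in C_\be \subset X^+\setminus\{0\}$, so $v \notin \ker U_{\theta_{n+r}\w}(1)$ in the relevant sense, and then the focusing condition~\eqref{focusing} applied at the point $\theta_{r}(\theta_n\w)$ — or rather the observation that $v$ dominates a positive multiple of $\be$ from below and is dominated by applying $\varkappa$ to a later iterate — gives $v \in C_\be$.

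The main obstacle is exactly this last sub-unit-time step: (A3) directly controls only the time-$1$ map, and for $0 < t < 1$ the image $U_\w(t)\,u$ need not be governed by any focusing inequality at all. The lower bound $U_\w(t)\,u \ge \underline{\alpha}\,U_\w(t)\,\be$ is easy, but to get an upper bound of the form $U_\w(t)\,u \le c\,\be$ one must reason backwards: since $U_\w(1)\,u = U_{\theta_t\w}(1-t)\,(U_\w(t)\,u)$ is comparable to $\be$ and, applying the $t=1$ focusing at $\theta_t\w$ to the vector $U_\w(t)\,u$ is not allowed (wrong time), the honest fix is to prove the lemma simultaneously with the observation that $\be$ itself generates an invariant "comparability component" — i.e., note $U_\w(t)\,\be \sim \be$ for $0 \le t \le 1$ follows from the sandwich $U_{\theta_t\w}(1-t)(U_\w(t)\,\be) = U_\w(1)\,\be \sim \be$ plus monotonicity run in both time directions along the entire orbit — and then use $u \sim \be \Rightarrow U_\w(t)\,u \sim U_\w(t)\,\be \sim \be$, where the first comparability is preserved by the positive operator $U_\w(t)$ and the second is the just-established fact. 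I would therefore organize the proof as: (1) $t=1$ case via~\eqref{focusing}; (2) $U_\w(t)\,\be \in C_\be$ for $0 \le t \le 1$ by the backward sandwich; (3) general $u \in C_\be$ and $0 \le t \le 1$ since positive maps preserve $\sim$; (4) all $t \ge 0$ by writing $t = n + r$ and iterating the cocycle property.
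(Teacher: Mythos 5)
You correctly identify the obstacle — (A3) controls only the time-$1$ map, so for $0 < t < 1$ you get no direct focusing inequality — but your proposed resolution does not close the gap. The relation $U_{\theta_t\w}(1-t)\bigl(U_\w(t)\,\be\bigr) = U_\w(1)\,\be \sim \be$ says that the \emph{image} of $U_\w(t)\,\be$ under a further positive map lies in $C_\be$; this gives no information about $U_\w(t)\,\be$ itself, because a positive operator is neither injective nor order-bounded below, so $\alpha\,\be \le U_{\theta_t\w}(1-t)\,v \le \beta\,\be$ does not produce a sandwich $\alpha'\,\be \le v \le \beta'\,\be$. Your phrase ``monotonicity run in both time directions along the entire orbit'' hints vaguely at the right idea without carrying it out: the semiflow runs only forward in the fiber, and $\be$ does not canonically generate an entire orbit, so there is no ``backward'' monotonicity available.

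The fix the paper uses is a shift in the \emph{base}, not in the fiber. First one shows $U_\w(t)\,\be \in C_\be$ for all $t \ge 1$: by induction and~\eqref{focusing}, $U_\w(n)\,\be \in C_\be$ for every integer $n\ge 1$, and for non-integer $t\ge 1$ the dichotomy in (A3) applied to $U_\w(t)\,\be = U_{\theta_{t-1}\w}(1)\,U_\w(t-1)\,\be$ forces either $U_\w(t)\,\be = 0$ or $U_\w(t)\,\be \in C_\be$, the first case being impossible since it would give $U_\w(n_1+1)\,\be = U_{\theta_t\w}(n_1+1-t)\,U_\w(t)\,\be = 0$ for $n_1=\lfloor t\rfloor$, contradicting the integer-time result. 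Then for arbitrary $t\ge 0$ one works from $\theta_{-1}\w$: both $U_{\theta_{-1}\w}(1)\,\be$ and $U_{\theta_{-1}\w}(t+1)\,\be$ lie in $C_\be$ (the latter because $t+1\ge 1$), so writing $\widetilde\alpha\,\be \le U_{\theta_{-1}\w}(1)\,\be \le \widetilde\beta\,\be$ and $\alpha\,\be \le U_{\theta_{-1}\w}(t+1)\,\be \le \beta\,\be$, the cocycle identity $U_{\theta_{-1}\w}(t+1)\,\be = U_\w(t)\,U_{\theta_{-1}\w}(1)\,\be$ and monotonicity of $U_\w(t)$ yield $\widetilde\alpha\,U_\w(t)\,\be \le U_{\theta_{-1}\w}(t+1)\,\be$ and $U_{\theta_{-1}\w}(t+1)\,\be \le \widetilde\beta\,U_\w(t)\,\be$, hence $(\alpha/\widetilde\beta)\,\be \le U_\w(t)\,\be \le (\beta/\widetilde\alpha)\,\be$. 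Your steps (1), (3), (4) are fine; step (2) must be repaired by replacing the forward factorization $U_\w(1)=U_{\theta_t\w}(1-t)\circ U_\w(t)$, which puts the unknown operator $U_\w(t)$ on the \emph{inside} (where monotonicity cannot reach it), by the backward-shifted factorization $U_{\theta_{-1}\w}(t+1)=U_\w(t)\circ U_{\theta_{-1}\w}(1)$, which places $U_\w(t)$ on the \emph{outside} so that monotonicity can be applied to the sandwich of its argument.
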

\begin{proof}
First we check that $U_\w(t)\,\be\in C_\be$ for each $t\geq 0$. From (A3) we know that $U_\w(1)\,\be\neq 0$, and then~\eqref{focusing} implies that $U_\w(1)\,\be\in C_\be$ for each $\w\in\Omega$. From the cocycle property~\eqref{eq-cocycle} we deduce that $U_\w(2)\,\be=U_{\theta_1\w}(1)\,U_\w(1)\,\be$, which together with (A2) yields $U_\w(2)\,\be\in C_\be$. In a recursive way we obtain that
\begin{equation}
\label{natural}
U_\w(n)\,\be\in C_\be\quad \text{ for each } \w\in\Omega \text{ and } n\in\N\,.
\end{equation}
Now take $t\geq 1$. Since $U_\w(t)\,\be=U_{\theta_{t-1}\w} (1)\,U_\w(t-1)\,\be\,,$
from the focusing condition (A3) we have two options $U_\w(t)\,\be=0$ or  $U_\w(t)\,\be\in C_\be$.
 Assume that $U_\w(t)\,\be=0$ and take $n_1\in\N$ such that $n_1\leq t\leq n_1+1$. Again~\eqref{eq-cocycle} provides
\[U_\w(n_1+1)\,\be=U_{\theta_{t}\w}(n_1+1-t)\,U_\w(t)\,\be=0\,,\]
which contradicts~\eqref{natural} and proves that $U_\w(t)\,\be\in C_\be$ for $t\geq 1$.\par\smallskip
Next, if $t\geq 0$, we have proved that $U_{\theta_{-1}\w}(1)\,\be$ and $U_{\theta_{-1}\w}(t+1)\,\be\in C_\be$, that is,
\[\widetilde \alpha\, \be\leq U_{\theta_{-1}\w}(1)\,\be\leq \widetilde\beta\, \be \quad \text{and} \quad  \alpha\, \be\leq  U_{\theta_{-1}\w}(t+1)\,\be\leq \beta\, \be\,, \]
and consequently, from $U_{\theta_{-1}\w}(t+1)\,\be=U_\w(t)\,U_{\theta_{-1}\w}(1)\,\be$  and the monotonicity property (A2) we deduce that
\[\alpha\, \be\leq \widetilde\beta \,U_\w(t)\,\be\quad \text{and}\quad \beta\,\be\geq \widetilde\alpha \, U_\w(t)\,\be\,\]
i.e., $U_\w(t)\,\be\in C_\be$, as claimed.\par\smallskip
Finally, if $u\in C_\be$, it is immediate to check from $U_\w(t)\,\be\in C_\be$ and (A2) that  $U_\w(t)\,u\in C_\be$, which finishes the proof.
\end{proof}
As a consequence of this lemma, under assumption (A3), the map
\begin{equation}\label{defiw}
\begin{array}{lccc}
\mathcal U_\w(t)\colon & \Sigma &\longrightarrow & \Sigma \\[.1cm]
                        & u & \mapsto & \displaystyle{\frac{U_\w(t)\,u}{\n{U_\w(t)\,u}}}
\end{array}
\end{equation}
is well defined for each $t\geq 0$ and $\w\in\Omega$.  Moreover, it is continuous for the projective distance because
$d(\mathcal U_\w(t)\,u,\mathcal U_\w(t)\,v)\leq d(u,v)$, as can be easily deduced from Lemmas~4.5 and~4.9 of~\cite{MiShPart1}.  Furthermore,  as a consequence of~\eqref{eq-cocycle}
\begin{equation}
\label{cocycle2}
\mathcal U_{\theta_{s}\w}(t) \circ \mathcal U_{\w}(s)= \mathcal U_{\w}(t+s)\,,\quad \textrm{for each }\,\w \in \Omega \textrm{ and } s,\,t \in \R^{+}\,.
\end{equation}
\par\smallskip
We omit the proof of the next result which is completely analogous to the first part of the proof of Proposition~5.3 of~\cite{MiShPart1}. It follows from~\eqref{cocycle2}, the properties of the distance, the definition of $q$ (see Definition~\ref{OscilRatio}) and Lemma~\ref{lemma4.10}. As before, we denote by $\lfloor t \rfloor$ the integer part of the real number $t$.
\begin{prop}\label{prop5.3} Under assumption {\rm (A3)},
\begin{itemize}
\item[(i)]
for each $\w\in\Omega$, $t\geq 2$ and $u$, $\widetilde u \in \Sigma$
\begin{equation*}
d\bigl(\mathcal U_{\theta_{-t}\w}(t) \, u, \mathcal U_{\theta_{-t}\w}(t)\,\widetilde u \big) \leq 2 \, \ln\varkappa\big(\theta_{-\lfloor t \rfloor} \w \bigr) \, q\bigl(\theta_{-\lfloor t \rfloor+1}\w \big) \cdots q\big(\theta_{-1}\w \bigr) \,,
\end{equation*}
\item[(ii)]
for each $\w \in \Omega$, $t \geq 2$ and $u$, $\widetilde u \in \Sigma$
\begin{equation*}
d\bigl(\mathcal U_\w(t) \, u, \mathcal U_\w(t) \, \widetilde u \bigr) \leq 2 \, \ln\varkappa(\w) \, q\bigl(\theta_1\w \bigr) \cdots q\bigl(\theta_{\lfloor t \rfloor-1}\w \bigr)\,.
\end{equation*}
\end{itemize}
\end{prop}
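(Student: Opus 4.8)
The plan is to push a projective–distance estimate through the cocycle factorization \eqref{cocycle2}, using three facts already on record. \textbf{(a)} Every map $\mathcal U_\w(s)$ is non-expansive for $d$ on $\Sigma$ (noted right after \eqref{defiw}). \textbf{(b)} Each time-one map $\mathcal U_\w(1)$ contracts $d$ with ratio $q(\w)<1$: by Definition~\ref{OscilRatio}, the subsequent lemma giving $q(\w)=\tanh(\tau(\w)/4)$ with $\tau(\w)<\infty$, and scale-invariance of $d$, one has $d(\mathcal U_\w(1)u,\mathcal U_\w(1)v)\le q(\w)\,d(u,v)$ whenever $u\sim v$; the case $u=\alpha v$ with $u,v\in\Sigma$ excluded from the definition of $q$ forces $u=v$, where the inequality is trivial. \textbf{(c)} By Lemma~\ref{lemma4.10}, together with Lemma~\ref{lm:comparable-with-e} which guarantees $U_\w(1)u\sim\be$ for $u\in C_\be\supset\Sigma$, the image $\mathcal U_\w(1)(\Sigma)$ lies in the $d$-ball of radius $\ln\varkappa(\w)$ about $\be$ (note $\be\in\Sigma$ since $\n{\be}=1$ and $\be\sim\be$), hence has $d$-diameter at most $2\ln\varkappa(\w)$. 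Throughout, $\mathcal U_\w(s)(\Sigma)\subset\Sigma\subset C_\be$ by Lemma~\ref{lm:comparable-with-e}, so every intermediate vector stays in $C_\be$ and the hypotheses of (b) and (c) keep applying at each step.

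For part (ii), set $n=\lfloor t\rfloor\ge 2$ and $\delta=t-n\in[0,1)$ and factor, via \eqref{cocycle2},
\[
\mathcal U_\w(t)=\mathcal U_{\theta_n\w}(\delta)\circ\mathcal U_{\theta_{n-1}\w}(1)\circ\cdots\circ\mathcal U_{\theta_1\w}(1)\circ\mathcal U_\w(1).
\]
Given $u,\widetilde u\in\Sigma$, the innermost map $\mathcal U_\w(1)$ sends them, by (c), to a pair at $d$-distance at most $2\ln\varkappa(\w)$; the successive maps $\mathcal U_{\theta_1\w}(1),\dots,\mathcal U_{\theta_{n-1}\w}(1)$ multiply this estimate, by (b), by the factors $q(\theta_1\w),\dots,q(\theta_{n-1}\w)$; and the final fractional map $\mathcal U_{\theta_n\w}(\delta)$ does not increase it, by (a). This is precisely the bound claimed in (ii).

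For part (i) the reasoning is identical in spirit, but the fractional time must be peeled off at the $\theta_{-t}\w$ end, so that the $\varkappa$- and $q$-factors are anchored at integer translates of $\w$ rather than of $\theta_{-t}\w$; with the same $n,\delta$ write, via \eqref{cocycle2},
\[
\mathcal U_{\theta_{-t}\w}(t)=\mathcal U_{\theta_{-1}\w}(1)\circ\cdots\circ\mathcal U_{\theta_{-(n-1)}\w}(1)\circ\mathcal U_{\theta_{-n}\w}(1)\circ\mathcal U_{\theta_{-t}\w}(\delta),
\]
in which the innermost map is non-expansive (by (a)), $\mathcal U_{\theta_{-n}\w}(1)$ maps $\Sigma$ into a set of $d$-diameter at most $2\ln\varkappa(\theta_{-n}\w)$ (by (c)), and the remaining $n-1$ time-one maps contract by $q(\theta_{-(n-1)}\w),\dots,q(\theta_{-1}\w)$ (by (b)); multiplying gives exactly $2\ln\varkappa(\theta_{-\lfloor t\rfloor}\w)\,q(\theta_{-\lfloor t\rfloor+1}\w)\cdots q(\theta_{-1}\w)$. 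The only genuine care needed is this index bookkeeping — choosing the correct end at which to split the fractional piece so that the $q$-ranges match those in the statement — together with checking the degenerate case $n=2$, where a single $q$-factor survives and the factorizations shorten correctly; there is no substantive analytic obstacle beyond the (immediate) verification, via Lemma~\ref{lm:comparable-with-e}, that the intermediate vectors remain in $C_\be$.
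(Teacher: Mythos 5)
Your proposal is correct, and it follows exactly the route the paper indicates (the paper omits the proof, pointing to the cocycle property \eqref{cocycle2}, the properties of $d$, the definition of $q$, and Lemma~\ref{lemma4.10}); the only wrinkle worth noting is that the non-expansiveness of the innermost fractional map $\mathcal U_{\theta_{-t}\w}(\delta)$ in part (i) is not actually used — all that matters is that it sends $\Sigma$ into $\Sigma$ (Lemma~\ref{lm:comparable-with-e}), after which the diameter bound from Lemma~\ref{lemma4.10} takes over.
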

\smallskip
As a consequence, the following contraction property follows, whose proof is also omitted because it follows the arguments of Proposition~5.4 of~\cite{MiShPart1}.
\begin{prop}
\label{prop5.4}
Under assumption {\rm (A3)}, let $I:=\int_\Omega\ln q\, d\PP<0$. Then, there is an invariant set $\bar\Omega_1\subset \Omega$ with $\PP(\bar\Omega_{1}) = 1$ such that
\begin{itemize}
\item[(1)]
for each $I<J<0$ and  $\w\in\bar\Omega_1$, there is a $C_1(J,\w)>0$ such that
\[d\bigl(\mathcal U_{\theta_{-t}\w}(t)\,u,\, \mathcal U_{\theta_{-t}\w}(t)\,\widetilde u\bigr)\leq C_1(J,\w)\,e^{Jt}\]
whenever $t\geq 3$ and $u$, $\widetilde u\in\Sigma$,
\item[(2)]
for each $I<J<0$ and  $\w\in\bar\Omega_1$, there is a $C_2(J,\w)>0$ such that
\[d\bigl(\mathcal U_\w(t)\,u,\, \mathcal U_\w(t)\,\widetilde u\bigr)\leq C_2(J,\w)\,e^{Jt}\]
whenever $t\geq 2$ and $u$, $\widetilde u\in\Sigma$.
\end{itemize}
\end{prop}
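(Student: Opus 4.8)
The plan is to run the classical Birkhoff projective-contraction argument, now fed by the estimates of Proposition~\ref{prop5.3} rather than by a uniform contraction coefficient. Passing to logarithms in Proposition~\ref{prop5.3}(i) and using the elementary bound $\ln\varkappa\le e^{\lnplus\ln\varkappa}$, one has, for $\w\in\Omega$, $t\ge 2$ and all $u,\widetilde u\in\Sigma$,
\[
d\bigl(\mathcal{U}_{\theta_{-t}\w}(t)\,u,\,\mathcal{U}_{\theta_{-t}\w}(t)\,\widetilde u\bigr)\le 2\,\exp\!\bigl(\lnplus\ln\varkappa(\theta_{-\lfloor t\rfloor}\w)\bigr)\,\exp\!\Bigl(\sum_{k=1}^{\lfloor t\rfloor-1}\ln q(\theta_{-k}\w)\Bigr),
\]
and, from Proposition~\ref{prop5.3}(ii), $d(\mathcal{U}_{\w}(t)\,u,\mathcal{U}_{\w}(t)\,\widetilde u)\le 2\ln\varkappa(\w)\,\exp\!\bigl(\sum_{k=1}^{\lfloor t\rfloor-1}\ln q(\theta_{k}\w)\bigr)$ for $t\ge 2$. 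Thus it suffices to control (a) the ergodic sums of $\ln q$ along the $\theta$-orbit and (b), only in the backward case, the growth of $\lnplus\ln\varkappa$ along the orbit.

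For (a): by the lemma preceding Proposition~\ref{prop5.3}, $q(\w)=\tanh(\tau(\w)/4)<1$ for every $\w$, so $\ln q$ is measurable and $\le 0$; hence $(\ln q)^{+}\equiv 0\in L_1\OFP$, Birkhoff's ergodic theorem applies to the measure-preserving maps $\theta_{\mp1}$, and by ergodicity of $(\theta_t)$ the $\PP$-a.e.\ limits of $\tfrac1n\sum_{k=1}^{n}\ln q(\theta_{\mp k}\w)$ both equal the constant $I\in[-\infty,0)$, the reduction from the flow to its time-one sampling being done as in the corresponding step of~\cite{MiShPart1}. For (b): since $\lnplus\ln\varkappa\ge0$ and, by~(A3), $\lnplus\ln\varkappa\in L_1\OFP$, the standard consequence of Birkhoff (or of Borel--Cantelli) yields $\tfrac1n\lnplus\ln\varkappa(\theta_{-n}\w)\to0$ $\PP$-a.e., i.e.\ $\lnplus\ln\varkappa(\theta_{-\lfloor t\rfloor}\w)=o(t)$. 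Let $\bar\Omega_1$ be a flow-invariant set of full measure contained in the intersection of the full-measure sets on which these statements hold. Fix $\w\in\bar\Omega_1$ and $I<J<0$; pick $J'$ with $I<J'<J$ and put $\delta:=J-J'>0$. Then there is $N(J,\w)$ such that, for $t\ge N(J,\w)$, both $\sum_{k=1}^{\lfloor t\rfloor-1}\ln q(\theta_{-k}\w)<J'(\lfloor t\rfloor-1)$ and $\lnplus\ln\varkappa(\theta_{-\lfloor t\rfloor}\w)<\delta t$, so that the display above gives
\[
d\bigl(\mathcal{U}_{\theta_{-t}\w}(t)\,u,\,\mathcal{U}_{\theta_{-t}\w}(t)\,\widetilde u\bigr)\le 2\,e^{\delta t}\,e^{J'(\lfloor t\rfloor-1)}\le 2\,e^{-2J'}\,e^{(J'+\delta)t}=2\,e^{-2J'}\,e^{Jt},
\]
using $\lfloor t\rfloor-1\ge t-2$ and $J'<0$.

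It remains to dispose of the bounded range $3\le t<N(J,\w)$: there the right-hand side of Proposition~\ref{prop5.3}(i) depends only on $\lfloor t\rfloor$ and is finite, hence bounded by some $C(\w)$, and since $J<0$ we have $C(\w)\le \bigl(C(\w)e^{-JN(J,\w)}\bigr)e^{Jt}$ on that range. Taking $C_1(J,\w):=\max\{2e^{-2J'},\,C(\w)e^{-JN(J,\w)}\}$ establishes~(1). Part~(2) follows by the same scheme applied to Proposition~\ref{prop5.3}(ii) and the forward $\theta$-orbit; it is even simpler, since there the factor $2\ln\varkappa(\w)$ does not depend on $t$ and is absorbed directly into the constant, and the threshold $t\ge 2$ already suffices. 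The one genuinely delicate aspect is the interplay between the exponential decay rate and the very weak, \emph{doubly}-logarithmic integrability allowed for $\varkappa$ in~(A3): this is precisely why $\ln\varkappa(\theta_{-\lfloor t\rfloor}\w)$ must be controlled only as $e^{o(t)}$ and why one has to give up an $\varepsilon=J-J'$ of the rate to absorb it; a secondary point of care is the passage from the ergodicity of the continuous flow $(\theta_t)$ to the convergence of its time-one Birkhoff averages. Everything else is routine bookkeeping.
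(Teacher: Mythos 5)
Your proposal is correct and follows essentially the approach the paper implicitly invokes (it refers the reader to Proposition~5.4 of \cite{MiShPart1}): take logarithms in Proposition~\ref{prop5.3}, apply the Birkhoff ergodic theorem to $\ln q$ (noting $(\ln q)^{+}\equiv 0\in L_1\OFP$), control the backward-orbit factor $\ln\varkappa(\theta_{-\lfloor t\rfloor}\w)$ via $\ln\varkappa\le e^{\lnplus\ln\varkappa}$ together with the $\tfrac1n f(\theta^{\mp n}\w)\to 0$ consequence of integrability, sacrifice $\delta=J-J'$ of the rate to absorb the $e^{o(t)}$ factor, and then enlarge the constant to cover the finitely many initial integer levels of $\lfloor t\rfloor$. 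Your explicit flagging of the flow-to-time-one-sampling issue is also exactly the delicate point the reference handles, so the deferral is appropriate.
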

\smallskip
Proposition~\ref{prop5.4}(1) ensures that for any $\w \in \bar\Omega_1$ the following exists
\begin{equation}
\label{w(w)}
w(\w) := \lim_{s\to\infty} \mathcal{U}_{\theta_{-s}\w}(s) \, \be,
\end{equation}
where the limit is taken in $d$.  Since, by Lemma~\ref{lm:Sigma-complete}, $(\Sigma, d)$ is a complete metric space, $w(\w)$ belongs to $\Sigma$.  Further, it follows from Lemma~\ref{lm:projective-vs-norm} that the above limit can be taken in the $X$\nobreakdash-\hspace{0pt}norm.  Moreover, since the functions $[\w\mapsto \mathcal U_{\theta_{-n}\w}(n)\,\be]$ are $(\mathfrak{F},
\mathfrak{B}(X))$-measurable, the function $w\colon \bar\Omega_1\to X$ is measurable.
\par\smallskip
The next theorem shows the existence of generalized Floquet subspaces and principal Lyapunov exponent, and the uniqueness of entire positive orbits, which is the equivalent result to Theorem~3.6 of~\cite{MiShPart1} for the new focusing.
\begin{teor}\label{teor3.6}
Under assumptions  {\rm (A1)} and {\rm (A3)}, if $\bar\Omega_1$ is the invariant set of {\rm Proposition~\ref{prop5.4}}, there is  an $(\mathfrak{F},\mathfrak{B}(X))$-measurable function
\[
w\colon\bar\Omega_1\to \Sigma=C_\be\cap S_1(X^+),\;\w\mapsto w(\w)
 \]
 defined by~\eqref{w(w)} such that
\begin{itemize}
\item[(1)] for each $\w\in\bar\Omega_1$ and $t\geq 0$,
\begin{equation}\label{prop_w}
w(\theta_t\w)=\frac{U_\w(t)\,w(\w)}{\n{U_\w(t)\,w(\w)}}\,;
\end{equation}
\item[(2)]
for each $\w\in\bar\Omega_1$, the map $w_\w\colon \R\to X^+$ defined by
\begin{equation}
\label{eq:definition-of-w}
w_\w(t)
= \begin{cases}
\displaystyle\frac{w(\theta_t\w)}{\n{U_{\theta_t\w}(-t) \, w(\theta_t\w)}} & \quad \text{for } t\leq 0\,,
\\[.4cm]
\;U_\w(t)\,w(\w)& \quad \text{for } t\geq 0\,; \end{cases}
\end{equation}
is a positive entire orbit of $U_\w$, unique up to multiplication by a positive scalar;
\item[(3)]
there are an invariant set $\widetilde\Omega_1 \subset \bar\Omega_1$ with $\PP(\widetilde\Omega_1) = 1$ and  a $\bar\lambda_1 \in [-\infty,\infty)$ such that
\begin{equation*}
\bar\lambda_1 = \lim_{t\to\pm\infty} \frac{1}{t}\ln\n{w_\w(t)} = \int_\Omega \ln\n{w_{\omega'}(1)}\,d\PP(\omega')
\end{equation*}
for each $\w \in \widetilde\Omega_1$;
\item[(4)]
for each $\w \in \widetilde\Omega_1$ and $u \in X^+$ with $U_\w(1) \,u \neq 0$
\begin{equation}
\label{eqLyap}
\lim_{t\to\infty} \frac{1}{t}\ln\n{U_\w(t)\,u} = \bar \lambda_1\,;
\end{equation}
\item[(5)]
for each $\w \in \widetilde\Omega_1$ and $u \in X$
\begin{equation}
\label{ineqLyap}
\limsup_{t\to\infty}\frac{1}{t}\ln\n{U_\w(t)\,u}\leq \bar \lambda_1\,;
\end{equation}
that is,  $\{E_1(\w)\}_{\w \in \widetilde\Omega_1}$, with $E_1(\w) = \spanned\{w(\w)\}$, is a family of generalized principal Floquet subspaces, and $\bar\lambda_1$ is the generalized principal Lyapunov exponent.
\end{itemize}
\end{teor}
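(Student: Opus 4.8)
The plan is to follow, step by step, the proof of Theorem~3.6 of~\cite{MiShPart1}, inserting at each place the modifications forced by the weaker focusing hypothesis~(A3) (under which a positive vector may be sent to $0$ by $U_\w(1)$), and to establish the five items in order on the set $\bar\Omega_1$ of Proposition~\ref{prop5.4}, later shrunk to a smaller invariant full-measure set $\widetilde\Omega_1$. For item~(1), i.e.~\eqref{prop_w}, I would fix $\w\in\bar\Omega_1$, $t\ge0$, $s>t$, apply $\be$ to the cocycle identity~\eqref{cocycle2} written as $\mathcal U_\w(t)\circ\mathcal U_{\theta_{-(s-t)}\w}(s-t)=\mathcal U_{\theta_{-(s-t)}\w}(s)$, and use $\theta_{-(s-t)}\w=\theta_{-s}(\theta_t\w)$: as $s\to\infty$ the argument $\mathcal U_{\theta_{-(s-t)}\w}(s-t)\,\be$ tends in $d$ to $w(\w)$ and the right-hand side $\mathcal U_{\theta_{-s}(\theta_t\w)}(s)\,\be$ tends in $d$ to $w(\theta_t\w)$, both by~\eqref{w(w)}; since $\mathcal U_\w(t)$ is $d$-continuous (it is $1$-Lipschitz, see the comment after~\eqref{defiw}) and limits in the complete metric space $(\Sigma,d)$ of Lemma~\ref{lm:Sigma-complete} are unique, one gets $w(\theta_t\w)=\mathcal U_\w(t)\,w(\w)$. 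For item~(2), positivity of $w_\w(t)$ is immediate: for $t\ge0$ from $w(\w)\in C_\be$ and Lemma~\ref{lm:comparable-with-e}, for $t\le0$ because $w_\w(t)$ is a positive multiple of $w(\theta_t\w)\in\Sigma$ (the denominator in~\eqref{eq:definition-of-w} is nonzero by~\eqref{prop_w}). The entire-orbit identity $w_\w(s+t)=U_{\theta_s\w}(t)\,w_\w(s)$, $t\ge0$, is checked in the cases $s\ge0$ (the cocycle property~\eqref{eq-cocycle}), $s+t\le0$, and $s<0\le s+t$; in the last two one applies $U_{\theta_\sigma\w}(\cdot)$ with a suitable nonnegative time to both sides, reduces both images to a common vector via~\eqref{prop_w} and~\eqref{eq-cocycle}, and then uses that both sides are positive multiples of the same $w(\theta_{s+t}\w)$, on which that operator is injective. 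For uniqueness, I would take any positive entire orbit $v_\w$; since $v_\w(\sigma)=U_{\theta_{\sigma-1}\w}(1)\,v_\w(\sigma-1)\ne0$ for all $\sigma$, the dichotomy in~\eqref{focusing} forces $v_\w(\sigma)\in C_\be$, so $v_\w(\sigma)/\n{v_\w(\sigma)}\in\Sigma$; then, for fixed $t$ and $s\ge3$, writing $v_\w(t)/\n{v_\w(t)}=\mathcal U_{\theta_{t-s}\w}(s)\bigl(v_\w(t-s)/\n{v_\w(t-s)}\bigr)$ and, by~\eqref{prop_w}, $w(\theta_t\w)=\mathcal U_{\theta_{t-s}\w}(s)\,w(\theta_{t-s}\w)$, Proposition~\ref{prop5.4}(1) (with base point $\theta_t\w\in\bar\Omega_1$) gives $d\bigl(v_\w(t)/\n{v_\w(t)},w(\theta_t\w)\bigr)\to0$, hence $v_\w(t)=c(t)\,w_\w(t)$ with $c(t)>0$, and $c(t)$ is constant because both $v_\w$ and $w_\w$ are entire orbits.

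For item~(3), set $f(\w):=\ln\n{w_\w(1)}=\ln\n{U_\w(1)\,w(\w)}$, a real number since $U_\w(1)\,w(\w)\in C_\be$; because $\n{w(\w)}=1$, one has $f^+\le\lnplus\n{U_\w(1)}\in L_1\OFP$ by~(A1), so $\bar\lambda_1:=\int_\Omega f\,d\PP\in[-\infty,\infty)$ is well defined. From~\eqref{prop_w} one gets $w_\w(b)=\n{w_\w(a)}\,w_{\theta_a\w}(b-a)$ for $a\le b$, hence the telescoping relations $\ln\n{w_\w(n)}=\sum_{k=0}^{n-1}f(\theta_k\w)$ and $\ln\n{w_\w(-n)}=-\sum_{k=1}^{n}f(\theta_{-k}\w)$; Birkhoff's theorem (valid for $f$ with $f^+\in L_1$, cf.~\cite{Kre}) and ergodicity of $(\theta_t)$ then give, on an invariant full-measure $\widetilde\Omega_1\subset\bar\Omega_1$, that $\tfrac1n\ln\n{w_\w(\pm n)}\to\bar\lambda_1$. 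To pass to real $t\to\pm\infty$, for $t>0$ with $n=\lfloor t\rfloor$ I would sandwich $\ln\n{w_\w(t)}$ between $\ln\n{w_\w(n+1)}-\lnplus\n{U_{\theta_t\w}(n+1-t)}$ and $\ln\n{w_\w(n)}+\lnplus\n{U_{\theta_n\w}(t-n)}$, controlling the two corrections by the two integrability functions of~(A1) evaluated at $\theta_n\w$, which grow sublinearly along the orbit by~\cite{Kre}; the case $t\to-\infty$ is symmetric. This gives item~(3) together with $\bar\lambda_1=\int_\Omega\ln\n{w_{\omega'}(1)}\,d\PP(\omega')$.

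For items~(4) and~(5), let $u\in X^+$ with $U_\w(1)\,u\ne0$; by Remark~\ref{rm:dichotomy}, $U_\w(t)\,u>0$ for all $t\ge0$, and $\mathcal U_\w(1)\,u\in\Sigma$ by~\eqref{focusing}. Since $\mathcal U_\w(1)\,u$ and $w(\theta_1\w)$ lie in $\Sigma$ they are regularly comparable, $m\,w(\theta_1\w)\le\mathcal U_\w(1)\,u\le M\,w(\theta_1\w)$ with $m\le1\le M$ and $\delta_0:=\ln(M/m)<\infty$; applying $U_{\theta_1\w}(t-1)$ (monotone by~(A2)) and taking monotone norms yields $\bigl|\ln\n{U_\w(t)\,u}-\ln\n{U_\w(1)\,u}-\ln\n{w_{\theta_1\w}(t-1)}\bigr|\le\delta_0$ for all $t\ge1$, so dividing by $t$ and using item~(3) at $\theta_1\w\in\widetilde\Omega_1$ gives~\eqref{eqLyap}. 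If $U_\w(1)\,u=0$ then $U_\w(t)\,u=0$ for $t\ge1$ by Remark~\ref{rm:dichotomy}, whence $\limsup_t\tfrac1t\ln\n{U_\w(t)\,u}=-\infty$. For arbitrary $u\in X$, write $u=u_1-u_2$ with $u_1,u_2\in X^+$ ($X^+$ is reproducing) and use $\n{U_\w(t)\,u}\le\n{U_\w(t)\,u_1}+\n{U_\w(t)\,u_2}$ together with the two previous cases to obtain~\eqref{ineqLyap}. Finally, $w$ is $(\mathfrak{F},\mathfrak{B}(X))$-measurable (noted after~\eqref{w(w)}), $U_\w(t)E_1(\w)=E_1(\theta_t\w)$ by~\eqref{prop_w}, and items~(3) and~(5) supply the exponent estimates, so $\{E_1(\w)=\spanned\{w(\w)\}\}_{\w\in\widetilde\Omega_1}$ is a family of generalized principal Floquet subspaces with generalized principal Lyapunov exponent $\bar\lambda_1$ in the sense of Definition~\ref{generalized-floquet-space}.

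The main obstacle, and the only genuinely new point with respect to~\cite{MiShPart1}, is bookkeeping the vectors that get annihilated: one must use Remark~\ref{rm:dichotomy} to guarantee that $\be$, every positive entire orbit, and every $u\in X^+$ not killed at time $1$ all evolve inside the single component $C_\be$, so that the contraction estimates of Proposition~\ref{prop5.4} and the norm-versus-projective-distance estimate of Lemma~\ref{lm:projective-vs-norm} apply verbatim, while in item~(5) the part of $u$ sent to $0$ by $U_\w(1)$ must be split off and discarded; a secondary point needing care is the integer-to-real interpolation in item~(3), which uses both integrability functions of~(A1) and must allow $\bar\lambda_1=-\infty$.
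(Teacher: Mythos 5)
Your proposal is correct and follows essentially the same route as the paper's proof: backward limit and cocycle identity for (1), case analysis plus the contraction estimate of Proposition~\ref{prop5.4}(1) for (2), Birkhoff and integer-to-real interpolation via the two (A1) integrability functions for (3), the focusing-induced comparability in $\Sigma$ for (4), and the reproducing property of $X^+$ together with dichotomy for (5). The only deviations are cosmetic — you invoke the $1$-Lipschitz property of $\mathcal U_\w(t)$ instead of a second application of Proposition~\ref{prop5.4}(1) in (1), phrase the entire-orbit verification via scalar matching rather than direct computation in (2), and use the triangle inequality on $u=u_1-u_2$ instead of the paper's $|u|=u^++v^+$ bound in (5) — none of which changes the substance of the argument.
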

\begin{proof} (1)  From relation~\eqref{cocycle2} and the definition of $w(\w)$  we deduce that
\begin{align*}
\mathcal U_\w(t)\,w(\w) &=\mathcal U_\w(t)\left( \lim_{s\to\infty} \mathcal U_{\theta_{-s}\w}(s)\,\be\right)=\lim_{s\to\infty} \bigl( \mathcal U_\w(t) \circ \mathcal U_{\theta_{-s}\w}(s) \bigr)\, \be
\\
& = \lim_{s\to\infty}\mathcal U_{\theta_{-s}\w}(s+t)\,\be=\lim_{s\to\infty} \bigl( \mathcal U_{\theta_{t-s}\w}(s) \circ \mathcal U_{\theta_{-s}\w}(t)\bigr) \, \be \,,
\end{align*}
for each $\w \in \bar\Omega_1$ and $t \geq 0$. Moreover, from Proposition~\ref{prop5.4}(1)
\[
d\bigl(\mathcal U_{\theta_{t-s}\w}(s) \, \be, \, \mathcal U_{\theta_{t-s}\w}(s)\bigl(\mathcal U_{\theta_{-s}\w}(t)\,\be\bigr)\bigr)\leq C_1(J,\theta_{t}\omega)\,e^{J s}\,,
\]
from which it follows that $\mathcal U_\w(t)\,w(\w)=\lim_{s\to\infty} \mathcal U_{\theta_{t-s}\w}(s)\,\be = w(\theta_t\w)$, as stated.

\smallskip
(2) First notice that if $t\leq 0$ and $\w\in\bar\Omega_1$
\begin{equation}
\label{tle0}
U_{\theta_t\w}(-t)\,w_\w(t) = \frac{U_{\theta_t\w}(-t)\,w(\theta_t\w)} {\n{U_{\theta_t\w}(-t)\,w(\theta_t\w)}} = w(\w)\,.
\end{equation}
Now we check that $w_\w$ is an entire orbit, i.e.,  $w_\w(s + t) = U_{\theta_{s}\w}(t)\, w_\w(s)$  for each $s \in \R$ and $t\geq 0$. If $t\geq 0$ and $s\geq 0$ it is immediate. If $t\geq 0$, $s\leq 0$ and $t+s\geq 0$, from~\eqref{eq-cocycle} and~\eqref{tle0}
\begin{equation*}
U_{\theta_{s}\w}(t) \, w_\w(s) = U_\w(t+s)(U_{\theta_{-s}\w}(-s) \, w_\w(s)) = U_\w(t+s)\,w(\w) = w_\w(t+s) \, .
\end{equation*}
Next, since $\theta_{t+s}\w=\theta_t(\theta_s\w)$, if $t\geq 0$, $s\leq 0$ and $t+s\leq 0$, from
\[w_\w(t+s)=\frac{w(\theta_{t+s}\w)}{\n{U_{\theta_{t+s}\w}(-t-s)\,w(\theta_{t+s}\w)}}\,,\quad w(\theta_{t+s}\w)=\frac{U_{\theta_s\w}(t)\,w(\theta_s)} {\n{U_{\theta_s\w}(t)\,w(\theta_s)}}\,,\]
and   $U_{\theta_{t+s}\w}(-t-s)\circ U_{\theta_s\w}(t)=U_{\theta_s\w}(-s)$, we deduce that
\[w_\w(t+s)=\frac{U_{\theta_s\w}(t)\,w(\theta_s\w)}{\n{U_{\theta_s\w}(-s)\,w(\theta_s\w)}}= U_{\theta_s\w}(t)\,\w_\w(s)\,,\]
and  $w_\w$ is an entire positive orbit, as claimed.\par\smallskip
Finally we check the uniqueness. Let $v_\w$ be another entire positive orbit of $U_\w$. From $v_\w(s + t) = U_{\theta_{s}\w}(t)\, v_\w(s)$ we deduce that $v_\w(s)=U_{\theta_{s-1}\w}(1)\,v_\w(s-1)$, and since $v_\w(s)\in X^+\setminus\{0\}$ for each $s\in \R$, the focusing condition (A3) yields $v_\w(s)\in C_\be$ for each $s\in\R$. Therefore,
\[u=\frac{v_\w(-s+t)}{\n{v_\w(-s+t)}}\in \Sigma\,,\]
and from Proposition~\ref{prop5.4}(1) and the definition of $w(\w)$ it follows that
\[0=\lim_{s\to\infty} d\bigl(\mathcal U_{\theta_{-s}(\theta_t\w)}(s)\,u,w(\theta_t\w)\bigr)=
\lim_{s\to\infty} d\left(\frac{v_\w(t)}{\n{v_\w(t)}},w(\theta_t\w)\right)\]
for each $t\in\R$ and $\w\in\bar\Omega_1$.  Since, by Lemma~\ref{lm:Sigma-complete}, $d$ is a metric on $\Sigma$, $v_\w(t)=\n{v_\w(t)}\,w(\theta_t\w)$ for each $t \in \R$.  From this we deduce that $v_\w(0)=\n{v_\w(0)}\,w(\w)$, and hence
\[v_\w(t)=U_\w(t)\,v_\w(0)= \n{v_\w(0)}\,U_\w(t)\,w(\w)=\n{v_{\w}(0)}\,w_\w(t)\,,\]
i.e., they coincide up to multiplication by a positive scalar, as stated.\par\smallskip
(3) Since $\ln\n{w_\w(1)}= \ln \n{U_\w(1)\,w(\w)}$, from assumption (A1) we deduce that the map
$[ \, \Omega \ni \omega\mapsto \lnplus \n{w_\w(1)}\,] \in L_1\OFP$. Moreover,
\[\ln\n{w_\w(t+s)}=\ln\n{w_{\theta_s\w}(t)}+\ln\n{w_\w(s)}\quad \textrm{ for each } t,\,s\in \R.\]
Therefore, the application of Birkhoff ergodic theorem to $\left(\OFP,(\theta_n)_{n\in\Z}\right)$ and $\ln\n{w_\w(1)}$ provides a subset $\Omega_1'\subset \bar\Omega_1$ with $\theta_1(\Omega_1')=\Omega_1'$ and $\PP(\Omega_1')=1$, and an invariant measurable map $g$ with $g^+\in L_1\OFP$ such that
\[\lim_{n\to\pm\infty}\frac{1}{n}\sum_{k=1}^n\ln\n{w_{\theta_k\w}(1)}=\lim_{n\to\pm\infty} \frac{1}{n}\ln\n{w_\w(n)} =g(\w)\]
for each $\w\in\Omega_1'$ and
\begin{equation}\label{birkdisc}
\int_\Omega\ln\n{w_{\omega'}(1)}\, d\PP(\omega') = \Int_\Omega g(\omega') \, d\PP(\omega')\, .
\end{equation}
\par\smallskip
Next we take the invariant set  $\widetilde\Omega_1 = \cup_{s\in[0,1]}\theta_s\Omega_1'$. From assumption~(A1), as in Lemma~3.4 of Lian and Lu~\cite{Lian-Lu}, we check that if $\w\in\widetilde\Omega_1$ with $\w=\theta_{s_\w}\widetilde\w$ for some $s_\w\in[0,1)$ and  $\widetilde\w\in\Omega_1'$
\begin{align*}
\limsup_{t\to\pm\infty}\frac{1}{t}\ln\n{w_\w(t)}&\leq \lim_{n\to\pm\infty} \frac{1}{n}\ln\n{w_{\widetilde\w}(n)}=g(\widetilde \w)\,, \\
\liminf_{t\to\pm\infty}\frac{1}{t}\ln\n{w_\w(t)}&\geq \lim_{n\to\pm\infty}
\frac{1}{n}\ln\n{w_{\widetilde\w}(n)}=g(\widetilde \w)\,,
\end{align*}
that is, there exists the limit and coincide with $g(\widetilde\w)$,
\[\lim_{t\to\pm\infty}\frac{1}{t}\ln\n{w_\w(t)} = g(\widetilde \w)\,. \]
Finally, since the function on the left is invariant and hence constant a.e., from~\eqref{birkdisc} we conclude that
\begin{equation*}
\lim_{t\to\pm\infty}\frac{1}{t}\ln\n{w_\w(t)} = \int_\Omega\ln\n{w_{\omega'}(1)} \, d\PP(\omega')
\end{equation*}
for each $\w \in \widetilde\Omega_1$, as stated.
\par\smallskip
(4) The focusing condition (A3) yields $\beta(\w,u)\,\be\leq U_\w(1)\,u\leq \varkappa(\w)\,\beta(\w,u)\,\be$ and, together with
$\beta(\w,w(\w))\,\be\leq U_\w(1)\,w(\w)\leq \varkappa(\w)\,\beta(\w,w(\w))\,\be\,$, we deduce that
\[
0< U_\w(1)\,u\leq \frac{\varkappa(\w)\,\beta(\w,u)}{\beta(\w,w(\w))}\,U_\w(1)\,w(\w)\leq \varkappa^2(\w)\, U_\w(1)\,u\,.
\]
The monotonicity assumption (A2) and $U_\w(t)=U_{\theta_1 \w}(t-1)\circ U_\w(1)$ implies that
\[
0< U_\w(t)\,u\leq \frac{\varkappa(\w)\,\beta(\w,u)}{\beta(\w,w(\w))}\,U_\w(t)\,w(\w)\leq \varkappa^2(\w)\, U_\w(t)\,u\quad \text{ for } t\geq 1\,,\]
and the normal character of the positive cone $X^+$ provides
\[
\n{U_\w(t)\,u}\leq \frac{\varkappa(\w)\,\beta(\w,u)}{\beta(\w,w(\w))}\,\n{U_\w(t)\,w(\w)}\leq \varkappa^2(\w)\, \n{U_\w(t)\,u}\quad \text{ for } t\geq 1\,,
\]
from which we conclude that
\begin{equation*}
\lim_{t\to\infty} \frac{1}{t}\ln\n{U_\w(t)\,u} = \lim_{t\to\infty} \frac{1}{t}\ln\n{U_\w(t)\,w(\w)} = \bar\lambda_1\,,
\end{equation*}
as claimed.\par\smallskip
(5) Since we are assuming that the cone $X^+$ is reproducing, i.e., $X=X^+-X^+$, we can decompose
$u\in X$  as $u=u^+-v^+$ for some $u^+$ and $v^+\in X^+$. Thus, denoting $|u|=u^++v^+\in X^+$, we have $-|u|\leq u\leq |u|$, and again the monotonicity assumption (A2) yields
\[-U_\w(t)\,|u| \leq U_\w(t)\,u\leq U_\w(t)\,|u|\,,\quad t\geq 0\,.\] Therefore, we deduce that $0\leq U_\w(t)\,|u|+U_\w(t)\,u\leq 2\,U_\w(t)\,|u|$, hence, the normal character of the cone provides $\n{U_\w(t)\,u}\leq 3\,\n{U_\w(t)\,|u|}$, and inequality~\eqref{ineqLyap} follows from relation~\eqref{eqLyap}.
\end{proof}
\begin{nota}
\label{rm:compar-with-MiSh_1}
When one replaces (A3) with (A3-O), part~(4) of Theorem~\ref{teor3.6} takes the form
\begin{enumerate}
\item[(4)]
{\em for each $\w \in \widetilde\Omega_1$ and $u \in X^+ \setminus \{0\}$
\begin{equation}
\label{eqLyap-1}
\lim_{t\to\infty} \frac{1}{t} \ln\n{U_\w(t) \, u} = \bar\lambda_1\,.
\end{equation}
}
\end{enumerate}
Consequently, Theorem~\ref{teor3.6} is an extension of Theorem~2.2 in~\cite{MiShPart3}.
\end{nota}
\begin{nota}\label{lyapunovexponent}
Note that as in Proposition~2.2 of Mierczy\'nski and Shen~\cite{MiShPart3}, from  the existence of the family of generalized Floquet subspaces and the generalized principal Lyapunov exponent $\bar\lambda_1$ obtained in the previous Theorem~\ref{teor3.6}, it follows that
\begin{equation*}
\lim_{t\to\infty}\frac{\ln\n{U_\w(t)}}{t} = \bar\lambda_1
\end{equation*}
for each $\w\in\widetilde\Omega_1$.
\end{nota}
The following theorem provides a counterpart of Theorem~\ref{teor3.6} for the dual system. As in~\cite{MiShPart1}, we define, for $t \geq 0$ and $\w \in \Omega$
\begin{equation}\label{defiw*}
\begin{array}{lccc}
\mathcal U_\w^*(t)\colon & S_1((X^*)^+) &\longrightarrow & S_1((X^*)^+) \\[.1cm]
& u^* & \mapsto & \displaystyle{\frac{U_\w^*(t)\,u^*}{\n{U_\w^*(t)\,u^*}}}\,.
\end{array}
\end{equation}
By Lemma~\ref{lm:dual-injective}, the above mappings are well defined.  Further, it follows from~\eqref{eq-cocycle-dual} that
\begin{equation}
\label{cocycle2-dual}
\mathcal{U}^{*}_{\theta_{-t}\w}(s) \circ \mathcal{U}^{*}_{\w}(t) = \mathcal{U}^{*}_{\w}(t+s) \,, \quad \textrm{for each } \, \w \in \Omega \textrm{ and } s, \, t \in \R^{+} \,.
\end{equation}
\begin{lema}
\label{lm:auxiliary}
Assume~\textup{(A3)*}.  For any $s \ge 1$ and any nonzero $u^{*} \in (X^{*})^{+}$ there holds $U_{\theta_s\w}^*(s) \, u^{*} \in C^{*}_{U^{*}_{\omega}(1)\, \mathbf{e}^{*}}$, where
\begin{equation}
C^{*}_{U^{*}_{\omega}(1) \mathbf{e}^{*}} := \{ v^{*} \in (X^{*})^{+} \setminus \{0\} \mid v^{*} \sim U^{*}_{\omega}(1)\, \mathbf{e}^{*} \}
\end{equation}
denotes the component of $U^{*}_{\omega}(1) \,\mathbf{e}^{*}$ in $(X^{*})^{+}$.
\end{lema}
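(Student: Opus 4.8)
The plan is to follow the proof of Lemma~\ref{lm:comparable-with-e} in structure, replacing (A3) by (A3)*, the monotonicity (A2) by (A2)*, the cocycle identity~\eqref{eq-cocycle} by its dual~\eqref{eq-cocycle-dual}, and the dichotomy of Remark~\ref{rm:dichotomy} by Lemma~\ref{lm:dual-injective}: in the dual system no nonzero positive functional is ever annihilated, so the branch ``$U^{*}_{\w}(t)\,u^{*}=0$'' simply never occurs and there is nothing to rule out.

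First I would reduce the assertion to the reference functional $\be^{*}$. Fix $\w\in\Omega$, $s\ge 1$ and a nonzero $u^{*}\in(X^{*})^{+}$, and use~\eqref{eq-cocycle-dual} with the splitting $s=(s-1)+1$ to write
\[
U^{*}_{\theta_{s}\w}(s)=U^{*}_{\theta_{1}\w}(1)\circ U^{*}_{\theta_{s}\w}(s-1).
\]
By Lemma~\ref{lm:dual-injective} the functionals $v^{*}:=U^{*}_{\theta_{s}\w}(s-1)\,u^{*}$ and $U^{*}_{\theta_{s}\w}(s-1)\,\be^{*}$ are nonzero, so~\eqref{focusing*}, applied at the base point $\theta_{1}\w$ first to $v^{*}$ and then to $U^{*}_{\theta_{s}\w}(s-1)\,\be^{*}$, shows that $U^{*}_{\theta_{s}\w}(s)\,u^{*}$ and $U^{*}_{\theta_{s}\w}(s)\,\be^{*}$ are comparable. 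Hence it suffices to prove that $U^{*}_{\theta_{s}\w}(s)\,\be^{*}\in C^{*}_{U^{*}_{\w}(1)\,\be^{*}}$ for every $s\ge 1$.

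For that last point I would treat integer times first: iterating~\eqref{eq-cocycle-dual} gives $U^{*}_{\theta_{n}\w}(n)=U^{*}_{\theta_{1}\w}(1)\circ U^{*}_{\theta_{2}\w}(1)\circ\cdots\circ U^{*}_{\theta_{n}\w}(1)$ for every $n\in\N$, and a recursive application of~\eqref{focusing*} together with (A2)*, using Lemma~\ref{lm:dual-injective} to keep every intermediate functional nonzero, shows that $U^{*}_{\theta_{n}\w}(n)\,\be^{*}$ stays in one fixed component of $(X^{*})^{+}$ for all $n\ge 1$ --- this is the counterpart of~\eqref{natural}. To pass to a real $s\ge 1$, choose $n\in\N$ with $n\le s\le n+1$; by~\eqref{eq-cocycle-dual} one has $U^{*}_{\theta_{n+1}\w}(n+1)=U^{*}_{\theta_{s}\w}(s)\circ U^{*}_{\theta_{n+1}\w}(n+1-s)$, and since $U^{*}_{\theta_{n+1}\w}(n+1-s)\,\be^{*}$ is nonzero (Lemma~\ref{lm:dual-injective}), the reduction of the previous paragraph --- applied with $u^{*}=U^{*}_{\theta_{n+1}\w}(n+1-s)\,\be^{*}$ --- forces $U^{*}_{\theta_{s}\w}(s)\,\be^{*}$ and $U^{*}_{\theta_{n+1}\w}(n+1)\,\be^{*}$ into the same component; combined with the integer case this gives the claim, and the reduction step then upgrades it to an arbitrary nonzero $u^{*}$.

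The step I expect to be the real obstacle is the bookkeeping of the $\theta$-shifts in the dual cocycle when one insists on identifying that fixed component with $C^{*}_{U^{*}_{\w}(1)\,\be^{*}}$: because $\Phi^{*}$ is a semiflow over $(\theta_{-t})$, peeling a time-one factor off $U^{*}_{\theta_{s}\w}(s)$ produces $U^{*}_{\theta_{1}\w}(1)$ rather than $U^{*}_{\w}(1)$, so one must argue that the component so obtained is nevertheless the one anchored at $\w$. The role played by the ``$C_{\be}$ is absorbed by every $U_{\w}(t)$'' property in Lemma~\ref{lm:comparable-with-e} has to be reconstructed here for $\Phi^{*}$, and keeping track of exactly which $\theta$-shift appears at each factorization is where essentially all of the care goes.
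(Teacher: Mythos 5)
The paper proves this in three lines: apply the dual cocycle once to peel off a one-step factor, invoke Lemma~\ref{lm:dual-injective} to conclude the intermediate functional is nonzero, and then apply~\eqref{focusing*}. Your ``step~1'' is exactly this argument, and it already finishes the proof --- once you have
\[
U^{*}_{\theta_{s}\w}(s)\,u^{*} \;=\; U^{*}_{\theta_{1}\w}(1)\Bigl(U^{*}_{\theta_{s}\w}(s-1)\,u^{*}\Bigr),
\]
with the inner vector nonzero by Lemma~\ref{lm:dual-injective}, the inequality~\eqref{focusing*} at the base point $\theta_{1}\w$ pins $U^{*}_{\theta_{s}\w}(s)\,u^{*}$ between positive multiples of $U^{*}_{\theta_{1}\w}(1)\,\mathbf{e}^{*}$, which is precisely membership in a fixed component, for every real $s \ge 1$ and every nonzero $u^{*}$ simultaneously. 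Steps~2 and 3 are therefore redundant: the passage through integer times and the interpolation to real $s$ add nothing, and indeed your step~3 merely re-invokes step~1, which would let you short-circuit the entire construction.

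The $\theta$-shift you flag as ``the real obstacle'' is a genuine observation, but you should not try to resolve it, because it cannot be resolved as stated: the component produced by this argument is $C^{*}_{U^{*}_{\theta_{1}\w}(1)\,\mathbf{e}^{*}}$, not $C^{*}_{U^{*}_{\w}(1)\,\mathbf{e}^{*}}$. The paper's proof writes the split as $U^{*}_{\theta_s\w}(s) = U^{*}_{\w}(1) \circ U^{*}_{\theta_{s-1}\w}(s-1)$, but a direct check using~\eqref{dual-definition} shows that the right-hand side equals $U^{*}_{\theta_{s-1}\w}(s)$, so there is an off-by-one slip in the paper (either in the statement's anchor $\w$ or in the proof's factorization). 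This is harmless downstream --- what matters in defining $w^{*}(\w)$ via~\eqref{w^*(w)} is only that the sequence $\mathcal{U}^{*}_{\theta_s\w}(s)\,\mathbf{e}^{*}$ eventually lies in one fixed component of $(X^{*})^{+}$, and that is what both the paper's intent and your step~1 deliver. The right response is to state the conclusion with the shifted anchor (or, equivalently, restate the lemma for $U^{*}_{\theta_{s-1}\w}(s)$), not to attempt to identify the two components, which there is no reason to expect.

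In short: your core argument is correct and matches the paper's, but you buried a one-line proof under an unnecessary integer/real reduction, and the shift you correctly noticed is a typo in the paper rather than a gap you need to fill.
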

\begin{proof}
By the cocycle property~\eqref{eq-cocycle-dual}, $U_{\theta_s\w}^*(s) \, u^{*} = U^{*}_{\omega}(1) \bigl( U^{*}_{\theta_{s - 1}\omega}(s - 1) \, u^{*} \bigr)$.  Lemma \ref{lm:dual-injective} gives that $U^{*}_{\theta_{s - 1}\omega}(s - 1) \, u^{*} \in (X^{*})^{+} \setminus \{0\}$.  Now we need to apply~\eqref{focusing*}.
\end{proof}
\smallskip
The following are counterparts of Propositions~\ref{prop5.3} and~\ref{prop5.4} for the dual system.
\begin{prop}
\label{prop5.3-dual}
Under assumption \textup{(A3)*},
\begin{itemize}
\item[(i)]
for each $\w \in \Omega$, $t \geq 2$ and $u^{*}$, $\widetilde{u}^{*} \in S_1((X^{*})^{+})$
\begin{equation*}
 d \bigl(\mathcal{U}^{*}_{\theta_{t}\w}(t) \, u^{*}, \, \mathcal{U}^{*}_{\theta_{t}\w}(t) \, \widetilde{u}^{*} \bigr) \leq 2 \, \ln\varkappa^{*}\bigl(\theta_{\lfloor t \rfloor }\w\bigr) \, q^{*} \bigl(\theta_{ \lfloor t \rfloor - 1}\w\bigr) \cdots q^{*}\bigl(\theta_{1}\w\bigr)\,,
\end{equation*}
\item[(ii)]
for each $\w\in\Omega$, $t\geq 2$ and $u^{*}$, $\widetilde{u}^{*} \in S_1((X^{*})^{+})$
\begin{equation*}
d\bigl(\mathcal{U}^{*}_\w(t) \, u^{*},\, \mathcal{U}^{*}_\w(t) \, \widetilde{u}^{*} \bigr) \leq
2 \, \ln\varkappa^{*}(\w) \, q^{*}\bigl(\theta_{-[t]+1}\w \bigr)\cdots q^{*} \bigl(\theta_{-1}\w\bigr) \,.
\end{equation*}
\end{itemize}
\end{prop}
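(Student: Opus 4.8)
Here is the plan of proof.

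The plan is to argue exactly as for Proposition~\ref{prop5.3} (and for~\cite[Proposition~5.3]{MiShPart1}), with $\Phi^{*}$ in place of $\Phi$, the dual cocycle identity~\eqref{cocycle2-dual} in place of~\eqref{cocycle2}, and the dual conclusions of Lemma~\ref{lemma4.10} and Lemma~\ref{lm:auxiliary} in place of the primal ones. Three facts will be used repeatedly. First, each $\mathcal{U}^{*}_{\w}(s)$ is non-expansive for the metric $d$ on comparable pairs, which follows from~(A2)* exactly as the analogous statement for $\mathcal{U}_{\w}(s)$ follows from~\cite[Lemmas~4.5 and~4.9]{MiShPart1}. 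Second, $d\bigl(\mathcal{U}^{*}_{\w}(1)\,u^{*},\mathcal{U}^{*}_{\w}(1)\,v^{*}\bigr)\le q^{*}(\w)\,d(u^{*},v^{*})$ whenever $u^{*},v^{*}\in(X^{*})^{+}\setminus\{0\}$ are comparable, by the definition of $q^{*}$ together with $q^{*}(\w)=\tanh(\tau^{*}(\w)/4)<1$. Third, the ``collapsing'' estimate inside Lemma~\ref{lemma4.10}: for every nonzero $u^{*}\in(X^{*})^{+}$ one has $U^{*}_{\w}(1)\,u^{*}\sim U^{*}_{\w}(1)\,\be^{*}$ and $d\bigl(U^{*}_{\w}(1)\,u^{*},U^{*}_{\w}(1)\,\be^{*}\bigr)\le\ln\varkappa^{*}(\w)$, so that by the triangle inequality $d\bigl(\mathcal{U}^{*}_{\w}(1)\,u^{*},\mathcal{U}^{*}_{\w}(1)\,v^{*}\bigr)\le 2\ln\varkappa^{*}(\w)$ for \emph{any} two nonzero $u^{*},v^{*}\in(X^{*})^{+}$, comparable or not. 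This last observation is exactly where the dual argument parts company with the primal one: the data in Proposition~\ref{prop5.3-dual} range over all of $S_{1}((X^{*})^{+})$ and so need not be comparable, which forces every composition, read from right to left, to begin with a block of length at least $1$, on which Lemma~\ref{lemma4.10} — with the help of Lemma~\ref{lm:dual-injective} when the block has non-integer length — simultaneously produces comparability and a uniform bound.

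For part~(ii) I would set $n=\lfloor t\rfloor\ge 2$ and let $r=t-n\in[0,1)$ be the fractional part of $t$, and iterate~\eqref{cocycle2-dual} to write
\[
\mathcal{U}^{*}_{\w}(t)=\mathcal{U}^{*}_{\theta_{-n}\w}(r)\circ\mathcal{U}^{*}_{\theta_{-(n-1)}\w}(1)\circ\cdots\circ\mathcal{U}^{*}_{\theta_{-1}\w}(1)\circ\mathcal{U}^{*}_{\w}(1),
\]
each further application of~\eqref{cocycle2-dual} prepending one unit factor whose base point is obtained from the previous one by $\theta_{-1}$. Applying the collapsing estimate to the rightmost factor $\mathcal{U}^{*}_{\w}(1)$ bounds the projective distance of the two images by $2\ln\varkappa^{*}(\w)$ and makes them comparable; the $n-1$ subsequent unit factors contract this distance in turn by $q^{*}(\theta_{-1}\w),\dots,q^{*}(\theta_{-(n-1)}\w)$; and the leftmost factor $\mathcal{U}^{*}_{\theta_{-n}\w}(r)$ is non-expansive. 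Multiplying the ratios yields the asserted bound $2\ln\varkappa^{*}(\w)\,q^{*}(\theta_{-[t]+1}\w)\cdots q^{*}(\theta_{-1}\w)$.

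Part~(i) will not follow by merely quoting~(ii) at the base point $\theta_{t}\w$, because there the $\varkappa^{*}$- and $q^{*}$-factors would be read along $\theta_{t}\w,\theta_{t-1}\w,\dots$ rather than along the integer translates $\theta_{\lfloor t\rfloor}\w,\theta_{\lfloor t\rfloor-1}\w,\dots,\theta_{1}\w$ demanded by the statement. Instead I would fold the fractional time into the first block and, again by~\eqref{cocycle2-dual}, decompose
\[
\mathcal{U}^{*}_{\theta_{t}\w}(t)=\mathcal{U}^{*}_{\theta_{1}\w}(1)\circ\mathcal{U}^{*}_{\theta_{2}\w}(1)\circ\cdots\circ\mathcal{U}^{*}_{\theta_{\lfloor t\rfloor-1}\w}(1)\circ\mathcal{U}^{*}_{\theta_{t}\w}(1+\{t\}),
\]
with $\{t\}=t-\lfloor t\rfloor$; the base-point bookkeeping uses $\theta_{-(j+1+\{t\})}\theta_{t}\w=\theta_{\lfloor t\rfloor-1-j}\w$, which makes the successive factor base points equal $\theta_{\lfloor t\rfloor-1}\w,\dots,\theta_{1}\w$. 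Since $1+\{t\}\ge 1$ and, by~\eqref{eq-cocycle-dual}, $U^{*}_{\theta_{t}\w}(1+\{t\})=U^{*}_{\theta_{\lfloor t\rfloor}\w}(1)\circ U^{*}_{\theta_{t}\w}(\{t\})$ with $U^{*}_{\theta_{t}\w}(\{t\})$ non-vanishing on $(X^{*})^{+}\setminus\{0\}$ by Lemma~\ref{lm:dual-injective}, Lemma~\ref{lemma4.10} applied at the base point $\theta_{\lfloor t\rfloor}\w$ shows that the two images of $u^{*}$ and $\widetilde{u}^{*}$ under the first block lie in $C^{*}_{U^{*}_{\theta_{\lfloor t\rfloor}\w}(1)\,\be^{*}}$ and are within $2\ln\varkappa^{*}(\theta_{\lfloor t\rfloor}\w)$ of each other; the remaining $\lfloor t\rfloor-1$ unit factors then contract by $q^{*}(\theta_{\lfloor t\rfloor-1}\w),\dots,q^{*}(\theta_{1}\w)$, which is precisely~(i). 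The only genuine obstacle is this shift bookkeeping, combined with the need — which does not arise in Proposition~\ref{prop5.3}, where the inputs already belong to $\Sigma\subset C_{\be}$ — to package the fractional time inside a block of length $\ge 1$ so that the collapsing estimate can be invoked and possibly non-comparable initial data do no harm.
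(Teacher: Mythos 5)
Your proof is correct and follows essentially the same route the paper intends: the paper states the result as the dual counterpart of Proposition~\ref{prop5.3}, whose own proof is omitted with a pointer to the argument in~\cite[Prop.~5.3]{MiShPart1} (iterate the cocycle into unit blocks, use the focusing bound from Lemma~\ref{lemma4.10} for the first full block to produce comparability and a uniform initial distance $2\ln\varkappa^{*}$, then contract by $q^{*}$ through the remaining unit blocks, with the fractional-length piece handled by non-expansivity). Your careful bookkeeping of base points in part~(i), the observation that inputs in $S_1((X^*)^+)$ need not be comparable (unlike $\Sigma\subset C_\be$ in the primal case) so that the first unit block must both bound and comparabilize, and the appeal to Lemma~\ref{lm:dual-injective} for the fractional step, are exactly the adaptations the paper leaves implicit.
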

\begin{prop}
\label{prop5.4-dual}
Under assumption \textup{(A3)*}, let $I:=\int_\Omega\ln q\, d\PP < 0$. Then, there is an invariant set $\bar\Omega^{*}_1 \subset \Omega$ with $\PP(\bar\Omega^{*}_1) = 1$ such that
\begin{itemize}
\item[(1)]
for each $I < J < 0$ and  $\w \in \bar\Omega^{*}_1$, there is a $C_3(J,\w) > 0$ such that
\begin{equation*}
d\bigl(\mathcal{U}^{*}_{\theta_{t}\w}(t) \, u, \, \mathcal{U}^{*}_{\theta_{t}\w}(t) \, \widetilde{u}^{*} \bigr)\leq C_3(J,\w) \, e^{J t}
\end{equation*}
whenever $t \geq 3$ and $u^{*}$, $\widetilde{u}^{*} \in S_1((X^{*})^{+})$,
\item[(2)]
for each $I < J < 0$ and  $\w \in \bar\Omega^{*}_1$, there is a $C_4(J,\w) > 0$ such that
\begin{equation*}
d\bigl(\mathcal{U}^{*}_\w(t) \, u^{*},\, \mathcal{U}^{*}_\w(t) \, \widetilde{u}^{*} \bigr) \leq C_4(J,\w) \, e^{J t}
\end{equation*}
whenever $t \geq 2$ and $u^{*}$, $\widetilde{u}^{*} \in S_1((X^{*})^{+})$.
\end{itemize}
\end{prop}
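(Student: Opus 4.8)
The plan is to follow closely the (omitted) proof of Proposition~\ref{prop5.4}, replacing Proposition~\ref{prop5.3} by its dual Proposition~\ref{prop5.3-dual} and running the same ergodic estimates on the dual data $q^{*}$ and $\varkappa^{*}$. Two facts about the base dynamics are prepared first. Since $q^{*}(\w)=\tanh(\tau^{*}(\w)/4)\in(0,1)$, the function $[\,\w\mapsto\ln q^{*}(\w)\,]$ is $(\mathfrak{F},\mathfrak{B}(\R))$-measurable and nonpositive, and $I^{*}:=\int_{\Omega}\ln q^{*}\,d\PP$ satisfies $I^{*}\le I$ — the dual counterpart of the identity $I=\int_{\Omega}\ln q\,d\PP$, obtained as in~\cite{MiShPart1} from the fact that $U^{*}_{\w}(1)$ is the adjoint of $U_{\theta_{-1}\w}(1)$ together with the $\PP$-invariance of $\theta_{1}$. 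Birkhoff's ergodic theorem applied to the ergodic system $(\OFP,(\theta_{n})_{n\in\Z})$ and to $\ln q^{*}$, in both time directions, gives a full-measure invariant set on which $\tfrac1n\sum_{k=1}^{n}\ln q^{*}(\theta_{\pm k}\w)\to I^{*}\le I$. Secondly, $\varkappa^{*}\ge1$ and $\lnplus\ln\varkappa^{*}\in L_1\OFP$, so the standard sublinear-growth argument gives a full-measure invariant set on which $\tfrac1n\lnplus\ln\varkappa^{*}(\theta_{\pm n}\w)\to0$. Let $\bar\Omega^{*}_1$ be a $\theta$-invariant full-measure set contained in the intersection of these two sets.

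For part~(2), fix $\w\in\bar\Omega^{*}_1$ and $I<J<0$ (so also $J\in(I^{*},0)$). By Proposition~\ref{prop5.3-dual}(ii), for $t\ge2$,
\begin{equation*}
d\bigl(\mathcal{U}^{*}_\w(t)\,u^{*},\,\mathcal{U}^{*}_\w(t)\,\widetilde{u}^{*}\bigr)\le 2\,\ln\varkappa^{*}(\w)\,\prod_{k=1}^{\lfloor t \rfloor-1} q^{*}(\theta_{-k}\w).
\end{equation*}
The backward Birkhoff limit yields $N=N(J,\w)$ with $\prod_{k=1}^{n}q^{*}(\theta_{-k}\w)\le e^{Jn}$ whenever $n\ge N$; since $\lfloor t \rfloor\ge t-1$ and $J<0$, the right-hand side is $\le 2\,\ln\varkappa^{*}(\w)\,e^{-2J}e^{Jt}$ once $\lfloor t \rfloor-1\ge N$. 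For the finitely many remaining $t$ one uses $q^{*}(\cdot)<1$, so the product is $\le1$ and the left-hand side is $\le2\,\ln\varkappa^{*}(\w)<\infty$. Choosing $C_4(J,\w)>0$ to dominate both regimes gives the claim.

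Part~(1) is identical except that the prefactor in Proposition~\ref{prop5.3-dual}(i) is evaluated at the moving base point $\theta_{\lfloor t \rfloor}\w$:
\begin{equation*}
d\bigl(\mathcal{U}^{*}_{\theta_{t}\w}(t)\,u^{*},\,\mathcal{U}^{*}_{\theta_{t}\w}(t)\,\widetilde{u}^{*}\bigr)\le 2\,\ln\varkappa^{*}(\theta_{\lfloor t \rfloor}\w)\,\prod_{k=1}^{\lfloor t \rfloor-1} q^{*}(\theta_{k}\w).
\end{equation*}
Given $I<J<0$, pick $J'\in(I,J)$ and $\ep\in(0,J-J')$. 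The forward Birkhoff limit gives $\prod_{k=1}^{n}q^{*}(\theta_{k}\w)\le e^{J'n}$ for $n$ large, while the sublinear growth of $\ln\varkappa^{*}$ along the orbit gives $\ln\varkappa^{*}(\theta_{m}\w)\le e^{\ep m}$ for $m$ large; multiplying, the right-hand side is eventually $\le 2\,e^{-J'}e^{(\ep+J')\lfloor t \rfloor}\le C\,e^{Jt}$, since $\ep+J'<J<0$. The remaining $3\le t\le t_0(J,\w)$ are absorbed as in part~(2), using $q^{*}<1$ and the finiteness of $\varkappa^{*}$ on the finitely many base points involved, which yields $C_3(J,\w)>0$.

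The only step that is not a verbatim transcription of the primal argument is the estimate $\int_{\Omega}\ln q^{*}\,d\PP\le I$, which ensures that the \emph{same} threshold $I$ governs the dual contraction rate; the secondary technical point is the moving prefactor $\varkappa^{*}(\theta_{\lfloor t \rfloor}\w)$ in part~(1), handled precisely because $\lnplus\ln\varkappa^{*}$ is integrable and hence grows sublinearly along almost every orbit. All remaining ingredients — measurability of $q^{*}$ and $\varkappa^{*}$, and the bookkeeping with $\lfloor t \rfloor$ — are inherited from Proposition~\ref{prop5.3-dual} and the corresponding steps in~\cite{MiShPart1}.
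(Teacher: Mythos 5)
Your overall plan is the right one and matches what the paper intends: the omitted proof of Proposition~\ref{prop5.4-dual} is supposed to be a mutatis mutandis copy of the proof of Proposition~\ref{prop5.4}, with Proposition~\ref{prop5.3-dual} replacing Proposition~\ref{prop5.3}, the discrete Birkhoff ergodic theorem applied to $\ln q^{*}$ in both time directions, and the sublinear-growth argument on $\lnplus\ln\varkappa^{*}$ used to absorb the moving prefactor $\varkappa^{*}(\theta_{\lfloor t\rfloor}\w)$ in part~(1). Your bookkeeping with $\lfloor t\rfloor$ and the disposal of the finitely many small $t$ are fine.

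The one step you have not actually established is precisely the one you flag yourself as non-verbatim: the inequality $\int_{\Omega}\ln q^{*}\,d\PP\le I$. You offer ``the adjoint relation plus $\PP$-invariance'' as an explanation, but that only converts the claim into the statement $q^{*}(\w)\le q(\theta_{-1}\w)$ (equivalently $q\bigl(U_{\theta_{-1}\w}(1)^{*}\bigr)\le q\bigl(U_{\theta_{-1}\w}(1)\bigr)$), and you never justify that. It does \emph{not} follow from the bounds recorded in this paper: Lemma~\ref{lemma4.10} gives $\tau^{*}(\w)\le 2\ln\varkappa^{*}(\w)$ and, via Proposition~\ref{A3->A3*}, $\varkappa^{*}(\w)=\varkappa^{2}(\theta_{-1}\w)$, so $\tau^{*}(\w)\le 4\ln\varkappa(\theta_{-1}\w)$, which is a \emph{weaker} upper bound than the one for $\tau(\theta_{-1}\w)\le 2\ln\varkappa(\theta_{-1}\w)$ and proves nothing in the desired direction. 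What is actually being used here is the classical duality of Birkhoff contraction ratios: for a positive operator $T$ on a normal reproducing cone one has $\tau(T^{*})=\tau(T)$, hence $q(T^{*})=q(T)=\tanh(\tau(T)/4)$ (this follows from the cross-ratio representation of the Hilbert metric, and is the content that lets~\cite{MiShPart1} state the dual proposition with the same $I$). With that in hand $q^{*}(\w)=q(\theta_{-1}\w)$, so $\int_{\Omega}\ln q^{*}\,d\PP=I$ by $\PP$-invariance, and your argument goes through. You should either state and cite this duality explicitly or reduce your claim to it; as written, the inequality is asserted, not proved, and it is exactly the load-bearing point where the dual proof differs from the primal one.

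A second, smaller point: you invoke Birkhoff for the \emph{discrete} system $(\theta_{n})_{n\in\Z}$, whose ergodicity is not assumed, so the a.e.\ limit of $\tfrac1n\sum_{k=1}^{n}\ln q^{*}(\theta_{\pm k}\w)$ is a priori a $\theta_{1}$-invariant function rather than the constant $\int\ln q^{*}\,d\PP$. This is resolved exactly as in the paper's proof of Theorem~\ref{teor3.6}(3): pass to the set $\bigcup_{s\in[0,1]}\theta_{s}\Omega'$, verify that the limiting function is $(\theta_{t})_{t\in\R}$-invariant, and invoke ergodicity of the continuous-time flow to conclude it is a.e.\ constant. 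You should make this standard step explicit rather than writing the limit directly as $I^{*}$.

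Everything else — the forward vs.\ backward directions matching parts (1) and (2), the $\ep$--$J'$ split to dominate the moving $\ln\varkappa^{*}(\theta_{\lfloor t\rfloor}\w)$, and the finite-range estimate for $2\le t$ (resp.\ $3\le t$) below the Birkhoff threshold — is correct and agrees with the paper's approach.
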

Proposition~\ref{prop5.4-dual}(1) ensures that for any $\w \in \bar\Omega^{*}_1$ the following exists
\begin{equation}
\label{w^*(w)}
w^*(\w) := \lim\limits_{s\to \infty} \mathcal{U}_{\theta_s\w}^*(s) \, \be^*,
\end{equation}
where the limit is taken in $d$.  Since, by Lemma~\ref{lm:auxiliary}, $\mathcal{U}_{\theta_s\w}^*(s)\,\be^*$ belongs, for any $s \ge 1$, to $C^{*}_{U^{*}_{\omega}(1) \,\mathbf{e}^{*}} \cap S_1((X^{*})^{+})$, and since, by a counterpart of Lemma~\ref{lm:Sigma-complete}, $(C^{*}_{U^{*}_{\omega}(1)\, \mathbf{e}^{*}} \cap S_1((X^{*})^{+}), d)$ is a complete metric space, $w^{*}(\w)$ belongs to $(C^{*}_{U^{*}_{\omega}(1)\, \mathbf{e}^{*}} \cap S_1((X^{*})^{+})$.  Further, it follows from a counterpart of Lemma~\ref{lm:projective-vs-norm} that the above limit can be taken in the $X^{*}$\nobreakdash-\hspace{0pt}norm.  Moreover, since the functions $[\w \mapsto \mathcal{U}^{*}_{\theta_{n}\w}(n) \, \be^*]$ are $(\mathfrak{F}, \mathfrak{B}(X^{*}))$\nobreakdash-\hspace{0pt}measurable, the function $w^{*} \colon \bar\Omega_1 \to X^{*}$ is measurable.  We want to remark that now $w^*(\w)$ does not necessarily belong to $C_{\be^*}$, because of the different focusing condition~\eqref{focusing*}.
\par\smallskip
Moreover, we claim that,  if $\bar\Omega_1$ is the invariant set of Proposition~\ref{prop5.4} and $w$ is defined by~\eqref{w(w)}, for each $\w\in\bar\Omega_1\cap\bar\Omega_1^*$
\begin{equation}\label{inequww*}
0<\langle w(\w),w^*(\w)\rangle\leq 1\,.
\end{equation}
The right inequality is immediate because they are unitary vectors. Concerning the left one, since $w(\w)\in C_\be$ there is an $\alpha>0$ such that $\alpha\,\be\leq w(\w)$ and, consequently
$\langle w(\w),w^*(\w)\rangle\geq \alpha\,\langle\be,w^*(\w)\rangle$. Next we check that $\langle\be,w^*(\w)\rangle>0$. Otherwise, from $\langle\be,w^*(\w)\rangle=0$  we would deduce that $\langle \be,U_\w^*(1)\,w^*(\w)\rangle=0$, and from relation~\eqref{focusing*} that $\langle \be,U_\w^*(1)\,\be^*\rangle=0$, that is, $\langle U_{\theta_{-1}\w}(1)\,\be,\be^*\rangle=0$. This contradicts that $U_{\theta_{-1}\w}(1)\,\be\in C_\be$ (see~\eqref{focusing}) and $\langle \be,\be^*\rangle=1$, and proves the assertion.
\begin{teor}\label{teor3.7}
 Under assumptions {\rm (A1)*} and {\rm (A3)*}, if $\bar\Omega_1^*$ is the invariant set of~\eqref{w^*(w)}, there is  an $(\mathfrak{F},\mathfrak{B}(X^*))$-measurable function
\[
w^*\colon\bar\Omega_1^*\to S_1((X^*)^+),\;\w\mapsto w^*(\w)
\]
 defined by~\eqref{w^*(w)} such that
\begin{itemize}
\item[(1)]
for each $\w \in \bar\Omega_1^*$ and $t \geq 0$,
\begin{equation}\label{prop_w^*}
w^*(\theta_{-t}\w)=\frac{U_\w^*(t)\,w^*(\w)}{\n{U_\w^*(t)\,w^*(\w)}}\,;
\end{equation}
\item[(2)]
for each $\w\in\bar\Omega_1^*$, the map $w_\w^*\colon \R\to X^+$ defined by
\begin{equation}
\label{defiw*w(t)}
w_\w^*(t)=\begin{cases} \displaystyle\frac{w^*(\theta_{-t}\w)} {\n{U^{*}_{\theta_{-t}\w}(-t) \, w^*(\theta_{-t}\w)}} & \quad \text{for } t < 0\,,\\[.4cm]
 \;U_\w^*(t)\,w^*(\w)& \quad \text{for } t\geq 0\,; \end{cases}
\end{equation}
is a positive entire orbit of $U_\w^*$, unique
up to multiplication by a positive scalar;
\item[(3)]
there are an invariant set $\widetilde\Omega_1^*\subset\bar\Omega_1^*$ with $\PP(\widetilde\Omega_1^*) = 1$ and  a $\bar\lambda_1^* \in [-\infty,\infty)$ such that
\begin{equation*}
\bar\lambda_1^* = \lim_{t\to\pm\infty}\frac{1}{t}\ln\n{w_\w^*(t)} = \int_\Omega\ln\n{w^{*}_{\omega'}(1)} \, d\PP(\omega')
\end{equation*}
for each $\w \in \widetilde\Omega_1^*$;
\item[(4)]
The generalized principal Lyapunov exponent $\bar\lambda_1$ obtained in {\rm Theorem~\ref{teor3.6}} coincides with $\bar\lambda_1^*$.
\end{itemize}
\end{teor}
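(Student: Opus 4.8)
The plan is to deduce parts~(1)--(3) by repeating, \emph{mutatis mutandis}, the proof of Theorem~\ref{teor3.6} for the dual system, and then to prove the genuinely new assertion~(4) by a duality pairing argument. For~(1)--(3): in the proof of Theorem~\ref{teor3.6} one replaces $\Phi$, $(\theta_t)$, assumptions~(A1) and~(A3), Propositions~\ref{prop5.3} and~\ref{prop5.4}, and the space $\Sigma=C_\be\cap S_1(X^+)$ by, respectively, $\Phi^*$, $(\theta_{-t})$, assumptions~(A1)* and~(A3)*, Propositions~\ref{prop5.3-dual} and~\ref{prop5.4-dual}, and the space $C^{*}_{U^{*}_{\w}(1)\,\be^*}\cap S_1((X^*)^+)$; by Lemma~\ref{lm:auxiliary} the normalized dual cocycle carries $S_1((X^*)^+)$ into this space after one time step, and by the obvious counterpart of Lemma~\ref{lm:Sigma-complete} the space is complete for $d$. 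Then~\eqref{prop_w^*} follows from~\eqref{cocycle2-dual} and Proposition~\ref{prop5.4-dual}(1) exactly as~\eqref{prop_w} followed from~\eqref{cocycle2} and Proposition~\ref{prop5.4}(1); the facts that $w_\w^*$ is a positive entire orbit of $U_\w^*$ and is unique up to a positive scalar use only~\eqref{prop_w^*}, the cocycle property~\eqref{eq-cocycle-dual}, and the remark that by~(A3)* every positive entire orbit of $U_\w^*$ lies, from time~$1$ on, in $C^{*}_{U^{*}_{\w}(1)\,\be^*}$; and the existence of $\bar\lambda_1^*$ with its integral representation is the Birkhoff ergodic theorem applied to $[\,\w\mapsto\ln\n{w_\w^*(1)}\,]$ (whose positive part lies in $L_1\OFP$ by~(A1)*, since $\n{w_\w^*(1)}=\n{U_\w^*(1)\,w^*(\w)}\le\n{U_\w^*(1)}$), together with the flow-invariance and hence a.e.\ constancy of $\lim_{t\to\pm\infty}\tfrac1t\ln\n{w_\w^*(t)}$, just as in Theorem~\ref{teor3.6}(3).

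The new content is~(4), which I would prove on the invariant full-measure set $\widetilde\Omega:=\widetilde\Omega_1\cap\widetilde\Omega_1^*$ by means of the exact duality identity
\begin{equation*}
\n{w_\w(n)}\,\langle w(\theta_n\w),w^*(\theta_n\w)\rangle=\n{w_{\theta_n\w}^*(n)}\,\langle w(\w),w^*(\w)\rangle ,\qquad \w\in\widetilde\Omega,\ n\in\N .
\end{equation*}
To get it, note that $U_\w(n)\,w(\w)=\n{w_\w(n)}\,w(\theta_n\w)$ by Theorem~\ref{teor3.6}(1); pairing this with $w^*(\theta_n\w)$ and moving $U_\w(n)$ onto the dual side via the definition~\eqref{dual-definition} yields $\n{w_\w(n)}\langle w(\theta_n\w),w^*(\theta_n\w)\rangle=\langle w(\w),U_{\theta_n\w}^*(n)\,w^*(\theta_n\w)\rangle$, and $U_{\theta_n\w}^*(n)\,w^*(\theta_n\w)=\n{w_{\theta_n\w}^*(n)}\,w^*(\w)$ by~\eqref{prop_w^*}.

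Taking logarithms in the identity, dividing by $n$ and letting $n\to\infty$: $\tfrac1n\ln\n{w_\w(n)}\to\bar\lambda_1$ by Theorem~\ref{teor3.6}(3); $\tfrac1n\ln\langle w(\w),w^*(\w)\rangle\to0$ trivially; and $\tfrac1n\ln\n{w_{\theta_n\w}^*(n)}\to\bar\lambda_1^*$ because, from the definition of $w_\w^*$ at negative times (Theorem~\ref{teor3.7}(2)) together with~\eqref{prop_w^*}, one has $\n{w_{\theta_n\w}^*(n)}=\n{w_\w^*(-n)}^{-1}$, while $\tfrac1n\ln\n{w_\w^*(-n)}\to-\bar\lambda_1^*$ by Theorem~\ref{teor3.7}(3). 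Hence the identity forces $\tfrac1n\ln\langle w(\theta_n\w),w^*(\theta_n\w)\rangle$ to converge, necessarily to $\bar\lambda_1^*-\bar\lambda_1$; since $\langle w(\theta_n\w),w^*(\theta_n\w)\rangle\le1$ by~\eqref{inequww*} (note $\theta_n\w\in\widetilde\Omega\subset\bar\Omega_1\cap\bar\Omega_1^*$), this limit is $\le0$, so $\bar\lambda_1^*\le\bar\lambda_1$ — with the understanding that if $\bar\lambda_1=-\infty$ the same identity forces $\bar\lambda_1^*=-\infty$.

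The reverse inequality is obtained symmetrically: pairing $U_{\theta_{-n}\w}(n)\,w(\theta_{-n}\w)=\n{w_\w(-n)}^{-1}\,w(\w)$ (Theorem~\ref{teor3.6}(1) in backward time) with $w^*(\w)$ and again using~\eqref{dual-definition} and~\eqref{prop_w^*} gives $\n{w_\w(-n)}^{-1}\langle w(\w),w^*(\w)\rangle=\n{w_\w^*(n)}\,\langle w(\theta_{-n}\w),w^*(\theta_{-n}\w)\rangle$; since $\tfrac1n\ln\n{w_\w(-n)}\to-\bar\lambda_1$, $\tfrac1n\ln\n{w_\w^*(n)}\to\bar\lambda_1^*$, and $\langle w(\theta_{-n}\w),w^*(\theta_{-n}\w)\rangle\le1$, we get $\bar\lambda_1\le\bar\lambda_1^*$, hence $\bar\lambda_1=\bar\lambda_1^*$. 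The step I expect to be the main obstacle is exactly this limit passage: the measurable factor $\langle w(\cdot),w^*(\cdot)\rangle$ need not have an integrable logarithm, so one cannot conclude directly that $\tfrac1n\ln\langle w(\theta_n\w),w^*(\theta_n\w)\rangle\to0$; the point is that this is unnecessary, since the exact identity together with the one-sided bound $\langle\cdot,\cdot\rangle\le1$ furnishes one inequality and its backward-time version the other.
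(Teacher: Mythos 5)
Your proposal is correct and follows essentially the same route as the paper: parts~(1)--(3) are obtained by transposing the argument of Theorem~\ref{teor3.6} to the dual cocycle (using Lemma~\ref{lm:auxiliary}, the dual counterpart of Lemma~\ref{lm:Sigma-complete}, and Propositions~\ref{prop5.3-dual}--\ref{prop5.4-dual}), and part~(4) rests on the exact duality identity your display encodes, which is precisely the paper's relation~\eqref{UU*}, combined with the backward-time expression~\eqref{lambda-2} for $\bar\lambda_1^*$ (equivalently your observation $\n{w^*_{\theta_n\w}(n)}=\n{w^*_\w(-n)}^{-1}$) and the one-sided bound~\eqref{inequww*}. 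Your remark that one cannot conclude $\tfrac1n\ln\langle w(\theta_n\w),w^*(\theta_n\w)\rangle\to0$ directly but only needs the one-sided bound is exactly the point the paper exploits, applying the identity at $\w$ and at $\theta_{-t}\w$ to get the two inequalities.
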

\begin{proof}  (1)  From relation~\eqref{cocycle2-dual} and the definition of $w^{*}(\w)$  we deduce that
\begin{align*}
\mathcal{U}^{*}_\w(t) \, w(\w) & = \mathcal{U}^{*}_\w(t) \left( \lim_{s \to \infty} \mathcal{U}^{*}_{\theta_{s}\w}(s) \, \mathbf{e}^{*} \right) = \lim_{s\to\infty} \bigl( \mathcal{U}^{*}_\w(t) \circ \mathcal{U}^{*}_{\theta_{s}\w}(s) \bigr) \, \mathbf{e}^{*}
\\
& = \lim_{s\to\infty} \mathcal{U}^{*}_{\theta_{s}\w}(s+t) \, \mathbf{e}^{*} = \lim_{s\to\infty} \bigl( \mathcal{U}^{*}_{\theta_{s-t}\w}(s) \circ \mathcal U_{\theta_{s}\w}(t) \bigr) \, \mathbf{e}^{*} \,,
\end{align*}
for each $\w \in \bar\Omega^{*}_1$ and $t \geq 0$. Moreover, from Proposition~\ref{prop5.4-dual}(1)
\begin{equation*}
d\bigl(\mathcal{U}^{*}_{\theta_{s-t}\w}(s) \, \mathbf{e}^{*}, \, \mathcal{U}^{*}_{\theta_{s-t}\w}(s) \bigl( \mathcal{U}^{*}_{\theta_{s}\w}(t) \, \mathbf{e}^{*} \bigr) \bigr) \leq C_3(J, \theta_{-t}\omega) \, e^{Js}\,,
\end{equation*}
from which it follows that $\mathcal{U}^{*}_\w(t) \, w^{*}(\w) = \lim_{s\to\infty} \mathcal{U}^{*}_{\theta_{s-t}\w}(s) \, \mathbf{e}^{*} = w^{*}(\theta_{-t}\w)$, as stated.
\par
\smallskip
(2) The fact that $w^{*}_{\w}$ is a positive entire orbit follows along the lines of the proof of Theorem~\ref{teor3.6}(2).
\par
\smallskip
We check the uniqueness. Let $v^{*}_\w$ be another entire positive orbit of $U^{*}_\w$. It follows from Proposition~\ref{prop5.4-dual}(1)  and the definition of $w^*$ that
\begin{equation*}
0 = \lim_{s \to \infty} d\left(\mathcal{U}^{*}_{\theta_{s}(\theta_t\w)}(s) \, \frac{v^{*}_\w(s+t)}{\n{v^{*}_\w(s+t)}}, w^{*}(\theta_t\w) \right) =
\lim_{s\to\infty} d\left(\frac{v^{*}_\w(t)}{\n{v^{*}_\w(t)}}, w^{*}(\theta_t\w)\right)
\end{equation*}
for each $t\in\R$ and $\w\in\bar\Omega^{*}_1$.  By Lemma~\ref{lm:auxiliary},
\begin{equation*}
\mathcal{U}^{*}_{\theta_{s}(\theta_t\w)}(s) \, \frac{v^{*}_\w(s+t)}{\n{v^{*}_\w(s+t)}} \in C^{*}_{U^{*}_{\theta_{t - 1}\omega}(1)\, \mathbf{e}^{*}}
\end{equation*}
for $s \ge 1$, and by the counterpart of Lemma~\ref{lm:Sigma-complete}, $(C^{*}_{U^{*}_{\theta_{t - 1}\omega}(1) \mathbf{e}^{*}}, d)$ is a complete metric space.  Consequently, $v^{*}_\w(t)=\n{v^{*}_\w(t)}\,w^{*}(\theta_t\w)$ for each $t \in \R$.  From this we deduce that $v^{*}_\w(0) = \n{v^{*}_\w(0)} \, w^{*}(\w)$, and hence
\begin{equation*}
v^{*}_\w(t) = U^{*}_\w(t) \, v^{*}_\w(0) = \n{v^{*}_\w(0)} \, U^{*}_\w(t) \, w^{*}(\w) = \n{v^{*}_{\w}(0)} \, w^{*}_\w(t)\,,
\end{equation*}
i.e., they coincide up to multiplication by a positive scalar, as stated.
\par
\smallskip
(3) The proof goes along the lines of the proof of Theorem~\ref{teor3.6}(3).
\par
\smallskip
(4) Let $\w \in \widetilde{\Omega}_1 \cap \widetilde{\Omega}_1^*$.  First note that, with the help of~\eqref{eq:definition-of-w} and~\eqref{defiw*w(t)},
\begin{align}
\label{lambda}
\bar\lambda_1 & = \lim_{t\to\infty}\frac{1}{t}\ln\n{w_\w(t)} = \lim_{t\to\infty} \frac{1}{t}\ln\n{U_\w(t)\,w(\w)}\,,
\\
\label{lambda-1}
\bar{\lambda}_1 &  = \lim_{t \to \infty} \frac{1}{-t} \ln\n{w_{\w}(-t)} = \lim_{t \to \infty} \frac{1}{t}\ln\n{U_{\theta_{-t}\omega}(t)\,w(\theta_{-t}\omega)}\,,
\\
\label{lambda-2}
\bar{\lambda}_1^* &  = \lim_{t \to \infty} \frac{1}{-t} \ln\n{w_\w^*(-t)} = \lim_{t\to\infty} \frac{1}{t} \ln\n{U_{\theta_t\w}^*(t)\,w^*(\theta_t\w)}\,,
\\
\label{lambda-3}
\bar{\lambda}_1^* &  = \lim_{t \to \infty} \frac{1}{t} \ln\n{w_\w^*(t)} = \lim_{t\to\infty} \frac{1}{t} \ln\n{U_{\w}^*(t)\,w^*(\w)}\,.
\end{align}
Moreover, from relations~\eqref{dual-definition}, \eqref{prop_w} and~\eqref{prop_w^*} we obtain
\begin{align}\label{UU*}
\n{U_w(t)\,w(\w)}\,\langle w(\theta_t\w),w^*(\theta_t\w)\rangle & = \langle U_\w(t)\,w(\w),w^*(\theta_t\w) \rangle \nonumber
\\
& = \langle w(\w),U_{\theta_t\w}^{*}(t)\,w^*(\theta_t\w) \rangle
\\
& = \n{U_{\theta_tw}^*(t)\,w^*(\theta_t \w)}\,\langle w(\w),w^*(\w)\rangle\,,\nonumber
\end{align}
which together with $\lim_{t\to\infty}(1/t)\ln \langle w(\w),w^*(\w)\rangle=0$, \eqref{lambda-3}, \eqref{lambda} and~\eqref{inequww*} provides that
\begin{equation}\label{desilambdas}
\begin{split}
\bar\lambda_1^* &=\lim_{t\to\infty}\left(\frac{1}{t}\ln \n{U_w(t)\,w(\w)}+\frac{1}{t}\ln \langle w(\theta_t\w),w^*(\theta_t\w)\rangle \right)\\
&\leq \lim_{t\to\infty}\frac{1}{t}\ln\n{U_\w(t)\,w(\w)} +\limsup_{t\to\infty} \frac{1}{t}\ln \langle w(\theta_t\w),w^*(\theta_t\w)\rangle\leq \bar\lambda_1\,.
\end{split}
\end{equation}
Analogously, using that
\begin{equation*}
\n{U_w^*(t)\,w^*(\w)}\,\langle w(\theta_{-t}\w),w^*(\theta_{-t}\w)\rangle = \n{U_{\theta_{-t}w}(t)\,w(\theta_{-t} \w)}\,\langle w(\w),w^*(\w)\rangle\,,
\end{equation*}
which follows by changing $\w$ to $\theta_{-t}\w$ in~\eqref{UU*}, we deduce, with the help of~\eqref{lambda-1} and~\eqref{lambda-3}, that $\bar\lambda_1 \leq \bar\lambda_1^*$, which finishes the proof.
\end{proof}
\begin{nota}
In view of Proposition~\ref{A3->A3*}, (A1)* and (A3)* can be replaced in Theorem~\ref{teor3.7} by (A1) and (A3).  Therefore, Theorem~\ref{teor3.7} is new even in the case of assumption~(A3-O).
\end{nota}
\section{Generalized exponential separation}\label{sec-GES}
As stated before, $X$ is an ordered separable Banach space such that $X^{*}$ is separable, with positive cone $X^+$ normal and reproducing, and recall that, once (A3) is assumed, assumptions (A2), (A2)* and (A3)* hold.
\par
\smallskip
In this section we will prove the existence of a generalized exponential separation of type II, now introduced and important for cases in which the previous concept of generalized exponential separation does not apply, as \mlsps s  induced by delay differential equations.
\begin{defi}
\label{def:exponential-separation}
The \mlsps  $\;\Phi = ((U_\w(t)), \allowbreak (\theta_t))$ is said to admit a {\em generalized exponential separation of type} II if there are a family of generalized principal Floquet subspaces $\{\widetilde{E}(\w)\}_{\w \in \widetilde{\Omega}}$,
and a family of one\nbd-codimensional closed vector subspaces $\{\widetilde{F}(\w)\}_{\w \in \widetilde{\Omega}}$ of $X$,  satisfying
\begin{itemize}
\item[{\rm (i)}]
$\widetilde{F}(\w)\cap X^+=\{u\in X^+ \mid U_\w(1)\,u=0\}$,
\item[{\rm (ii)}]
$X=\widetilde{E}(\w)\oplus \widetilde{F}(\w)$ for any $\w\in\widetilde{\Omega}$, where the decomposition is invariant, and the family of projections associated with the decomposition is strongly measurable and tempered,
\item[{\rm (iii)}]
there exists $\widetilde{\sigma}\in (0,\infty]$ such that
\[ \lim_{t\to\infty}\frac{1}{t}\ln \frac{\n{U_\w(t)|_{\widetilde{F}(\w)}}}{\n{U_\w(t)\,w(\w)}}=-\widetilde{\sigma}
\]
for each $\w\in\widetilde{\Omega}$.
\end{itemize}
We say that $\{\widetilde{E}(\cdot),\widetilde{F}(\cdot),\widetilde{\sigma}\}$ {\em generates a generalized exponential separation of type}~II.
\end{defi}
Note that the only difference with the definition of generalized exponential separation given in~\cite{MiShPart1} is that in this case $\widetilde{F}(\w)$  contains those positive vectors $u>0$ for which $U_\w(1)\,u=0$ because $U_\w(1)$ is not assumed to be injective.\par\smallskip
Next we consider $\widetilde{\Omega}_1$ and $\widetilde\Omega_1^*$, the invariant sets of Theorem~\ref{teor3.6} and~\ref{teor3.7}, and we define for each $\w\in \widetilde{\Omega}_1 \cap \widetilde\Omega_1^*$
\[F_1(\w) = \{u \in X \mid \langle u, w^*(\w) \rangle = 0 \}\,.\]
From~\eqref{inequww*}, each $u \in X$  can be decomposed as $u=\alpha\, w(\w)+u-\alpha\, w(\w)$ with \[\alpha=\frac{\langle u,w^*(\w)\rangle}{\langle w(\w),w^*(\w)\rangle}\,,\]
and hence,
\[ X=E_1(\w)\oplus F_1(\w)\,,\]
where, as denoted in Theorem~\ref{teor3.6}, $E_1(\w)=\spanned\{w(\w)\}$. As a consequence, the following result holds.
\begin{lema}\label{lema5.6}
The family $\{P(\w)\}_{\w\in \widetilde{\Omega}_1 \cap \widetilde\Omega_1^*}$ of projections associated with the decomposition $E_1(\w)\oplus F_1(\w)=X$ is given by the formula
\begin{equation}\label{defiPw}
P(\w)\,u=u - \frac{\langle u,w^*(\w)\rangle}{\langle w(\w),w^*(\w)\rangle}\,w(\w)\,,\quad \w \in \widetilde{\Omega}_1 \cap \widetilde\Omega_1^*\,.
\end{equation}
\end{lema}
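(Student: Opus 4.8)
The plan is to verify directly that the right-hand side of \eqref{defiPw} is the linear projection of $X$ onto $F_1(\w)$ along $E_1(\w)$, relying on the direct-sum decomposition $X = E_1(\w)\oplus F_1(\w)$ already established in the paragraph preceding the statement. First I would fix $\w \in \widetilde{\Omega}_1 \cap \widetilde\Omega_1^*$ and observe that, by~\eqref{inequww*}, $\langle w(\w), w^*(\w)\rangle > 0$, so that the scalar $\alpha(\w,u) := \langle u, w^*(\w)\rangle / \langle w(\w), w^*(\w)\rangle$ is well defined and depends linearly on $u$; hence $u \mapsto u - \alpha(\w,u)\,w(\w)$ is a well-defined linear map of $X$ into itself.

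Next I would check the two defining properties of the projection onto $F_1(\w)$ along $E_1(\w)$. For any $u \in X$,
\[
\langle u - \alpha(\w,u)\,w(\w),\, w^*(\w)\rangle = \langle u, w^*(\w)\rangle - \alpha(\w,u)\,\langle w(\w),w^*(\w)\rangle = 0\,,
\]
so the map takes values in $F_1(\w)$; and if $u \in F_1(\w)$ then $\langle u,w^*(\w)\rangle = 0$, whence $\alpha(\w,u) = 0$ and the map fixes $u$. Evaluating at $u = w(\w)$ gives $\alpha(\w,w(\w)) = 1$, so the map annihilates $E_1(\w) = \spanned\{w(\w)\}$.

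Finally I would invoke uniqueness of the decomposition: writing $u = \alpha(\w,u)\,w(\w) + \bigl(u - \alpha(\w,u)\,w(\w)\bigr)$ exhibits $u$ as the sum of an element of $E_1(\w)$ and an element of $F_1(\w)$, and since $E_1(\w)\cap F_1(\w) = \{0\}$ (any $\beta\,w(\w) \in F_1(\w)$ forces $\beta\,\langle w(\w),w^*(\w)\rangle = 0$, hence $\beta = 0$), this decomposition is unique; therefore the $F_1(\w)$\nobreakdash-component of $u$ is $u - \alpha(\w,u)\,w(\w)$, which is precisely \eqref{defiPw}. I do not expect any genuine obstacle here: the argument is routine linear algebra, and the one point requiring care, namely $\langle w(\w),w^*(\w)\rangle \ne 0$, has already been secured in~\eqref{inequww*}. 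It is worth noting in passing that the explicit formula \eqref{defiPw} makes the strong measurability of $\{P(\w)\}$ transparent from the measurability of $w(\cdot)$ and $w^*(\cdot)$, a fact to be exploited later.
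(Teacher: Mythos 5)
Your argument is correct and follows the same route the paper takes: the paragraph immediately preceding the lemma already records the decomposition $u = \alpha\,w(\w) + (u - \alpha\,w(\w))$ with $\alpha = \langle u,w^*(\w)\rangle/\langle w(\w),w^*(\w)\rangle$ and deduces $X = E_1(\w)\oplus F_1(\w)$, from which the formula for $P(\w)$ is read off directly. You simply make explicit the routine verifications (range in $F_1(\w)$, kernel $E_1(\w)$, uniqueness of the splitting) that the paper leaves implicit, and you correctly identify~\eqref{inequww*} as the point that keeps the denominator nonzero.
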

\par
\smallskip
We omit the proof of the next result, based in Lemma 5.10 of~\cite{MiShPart1} with the corresponding modifications due to the different definition of the maps~\eqref{defiw} and~\eqref{w(w)}. See Definition~\ref{OscilRatio} for the oscillation of two vectors.
\begin{prop}
\label{prop5.10}
Under assumptions {\rm (A1)} and {\rm (A3)}, there exists an invariant subset $\widetilde \Omega_2 \subset \widetilde{\Omega}_1 \cap \widetilde\Omega_1^*$ of full measure $\PP(\widetilde\Omega_2) = 1$, with the property that for each $0>J >\int_\Omega\ln p\,d\PP$
and each $\w \in \widetilde \Omega_2$, there is a $C_{5}(\w,J)>0$ such that
\[
{\textit{osc}}\left( \frac{U_\w(t)\,u}{\n{w_\w(t)}}/w(\theta_t\w)\right)\leq C_{5}(J,\w) \, e^{Jt}
\]
whenever $u \in \Sigma = C_\be\cap S_1(X^+)$ and $t\geq 1$.
\end{prop}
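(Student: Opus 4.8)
The plan is to reduce the claim to an oscillation-contraction estimate for $U_\w(t)\,u$ against $U_\w(t)\,w(\w)$ and then to turn the product of Birkhoff contraction ratios into exponential decay via the ergodic theorem; this is the scheme of~\cite[Lemma~5.10]{MiShPart1}, the only adjustment being that the renormalized cocycle $\mathcal{U}_\w(t)$ now lives on $\Sigma$ rather than on all of $S_1(X^+)$, so the argument $u$ is restricted to $\Sigma$ from the outset. First I would observe that, since $w_\w(t)=U_\w(t)\,w(\w)$ for $t\ge 0$ and, by~\eqref{prop_w}, $w(\theta_t\w)=U_\w(t)\,w(\w)/\n{U_\w(t)\,w(\w)}$, and since ${\textit{osc}}(\lambda a/\lambda b)={\textit{osc}}(a/b)$ for $\lambda>0$, one has for $u\in\Sigma$ and $t\ge 0$
\[
{\textit{osc}}\left(\frac{U_\w(t)\,u}{\n{w_\w(t)}}/w(\theta_t\w)\right)={\textit{osc}}\bigl(U_\w(t)\,u\,/\,U_\w(t)\,w(\w)\bigr)\,,
\]
a quantity well defined since both vectors lie in $C_\be$ by Lemma~\ref{lm:comparable-with-e}. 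Writing $n=\lfloor t\rfloor\ge 1$ and $U_\w(t)=U_{\theta_n\w}(t-n)\circ U_{\theta_{n-1}\w}(1)\circ\cdots\circ U_{\theta_1\w}(1)\circ U_\w(1)$, I would apply the defining inequality of the oscillation ratio (Definition~\ref{OscilRatio}) at each of the $n-1$ unit steps making up $U_\w(n)$, use that a positive operator never increases the oscillation of two comparable vectors with comparable images (as follows from Lemmas~4.5 and~4.9 of~\cite{MiShPart1}, with Lemma~\ref{lm:comparable-with-e} keeping every iterate in $C_\be$), and absorb the leftover fractional factor $U_{\theta_n\w}(t-n)$ at no cost, obtaining
\[
{\textit{osc}}\bigl(U_\w(t)\,u\,/\,U_\w(t)\,w(\w)\bigr)\ \le\ \Bigl(\prod_{k=1}^{n-1}p(\theta_k\w)\Bigr)\,{\textit{osc}}\bigl(U_\w(1)\,u\,/\,U_\w(1)\,w(\w)\bigr)\,.
\]

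The hard part is bounding the residual factor ${\textit{osc}}(U_\w(1)\,u\,/\,U_\w(1)\,w(\w))$ by a quantity depending on $\w$ but not on $u\in\Sigma$. Here the focusing condition~(A3) is essential: applying~\eqref{focusing} to $u$ and to $w(\w)$ gives $\beta(\w,u)\,\be\le U_\w(1)\,u\le\varkappa(\w)\,\beta(\w,u)\,\be$ and $\beta(\w,w(\w))\,\be\le U_\w(1)\,w(\w)\le\varkappa(\w)\,\beta(\w,w(\w))\,\be$, whence
\[
\frac{\beta(\w,u)}{\varkappa(\w)\,\beta(\w,w(\w))}\,U_\w(1)\,w(\w)\ \le\ U_\w(1)\,u\ \le\ \frac{\varkappa(\w)\,\beta(\w,u)}{\beta(\w,w(\w))}\,U_\w(1)\,w(\w)\,,
\]
so that ${\textit{osc}}(U_\w(1)\,u\,/\,U_\w(1)\,w(\w))\le \bigl(\varkappa(\w)-\varkappa(\w)^{-1}\bigr)\,\beta(\w,u)/\beta(\w,w(\w))$. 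The dependence on $u$ then disappears, because $\beta(\w,u)\,\be\le U_\w(1)\,u$, together with $\n{\be}=1$ and monotonicity of the norm, forces $\beta(\w,u)\le\n{U_\w(1)\,u}\le\n{U_\w(1)}$ for every $u\in\Sigma$ --- this is exactly where the normalization $\n{u}=1$ built into $\Sigma$ is used.

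Finally I would invoke the Birkhoff ergodic theorem for $(\theta_n)_{n\in\Z}$. Since $p(\w)=q(\w)=\tanh(\tau(\w)/4)<1$ for every $\w$, one has $\int_\Omega\ln p\,d\PP\in[-\infty,0)$, and the ergodic theorem applied to $\ln p$ yields an invariant set $\widetilde\Omega_2\subset\widetilde\Omega_1\cap\widetilde\Omega_1^*$ with $\PP(\widetilde\Omega_2)=1$ such that for each $J$ with $\int_\Omega\ln p\,d\PP<J<0$ and each $\w\in\widetilde\Omega_2$ there is $C(J,\w)>0$ with $\prod_{k=1}^{n-1}p(\theta_k\w)\le C(J,\w)\,e^{Jn}$ for all $n\ge 1$; since $n=\lfloor t\rfloor$ satisfies $t-1<n\le t$ and $J<0$, this is at most $C(J,\w)\,e^{-J}\,e^{Jt}$. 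Chaining this with the estimates above gives, for a suitable $C_5(J,\w)>0$, the asserted bound for all $u\in\Sigma$ and $t\ge 1$. I expect the genuine obstacle to be precisely the uniform-in-$u$ control of the residual oscillation (the step $\beta(\w,u)\le\n{U_\w(1)}$, which relies on $\Sigma$ consisting of unit vectors); the remainder is essentially the bookkeeping already carried out in~\cite[Lemma~5.10]{MiShPart1}.
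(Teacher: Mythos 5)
Your proposal is correct and takes essentially the same route the paper indicates: the paper omits this proof and cites~\cite[Lemma~5.10]{MiShPart1}, whose argument is precisely the unit-step decomposition with the oscillation ratio that you carry out, and your only real modification (restricting $u$ to $\Sigma$ so that the focusing inequality applies and the residual ${\textit{osc}}(U_\w(1)u / U_\w(1)w(\w))$ is bounded uniformly over unit vectors) is exactly the adjustment the paper says is needed. The one place you compress a nontrivial step is the passage from the Birkhoff theorem for the time-$1$ map to the constant $\int_\Omega \ln p\,d\PP$: as in the proof of Theorem~\ref{teor3.6}(3), one first gets a $\theta_1$-invariant limit, then extends over $\cup_{s\in[0,1]}\theta_s$ and uses ergodicity of the flow to conclude it is constant a.e., but this is a routine piece of bookkeeping already present in the surrounding arguments.
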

\begin{lema}
\label{limit0}
Under assumptions {\rm (A1)} and {\rm (A3)}, let $\bar\lambda_1$ be the generalized principal Lyapunov exponent of {\rm Theorem~\ref{teor3.6}}. If $\bar\lambda_1 > -\infty$ then
\[\lim_{t\to\pm\infty} \frac{1}{t}\ln\langle w(\theta_t\w), w^*(\theta_t\w)\rangle =0\]
for each $\w \in \widetilde{\Omega}_1 \cap \widetilde{\Omega}_1^*$.
\end{lema}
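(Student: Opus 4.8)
The plan is to extract both one-sided limits directly from the identity~\eqref{UU*}. For $\w\in\widetilde\Omega_1\cap\widetilde\Omega_1^*$ and $t\geq 0$ all vectors occurring in~\eqref{UU*} are nonzero (by Lemmas~\ref{lm:comparable-with-e} and~\ref{lm:dual-injective}), so that identity can be rewritten as
\begin{equation*}
\langle w(\theta_t\w),w^*(\theta_t\w)\rangle=\frac{\n{U^*_{\theta_t\w}(t)\,w^*(\theta_t\w)}}{\n{U_\w(t)\,w(\w)}}\,\langle w(\w),w^*(\w)\rangle\,.
\end{equation*}
Applying $\frac1t\ln(\cdot)$ and letting $t\to\infty$: the term $\frac1t\ln\langle w(\w),w^*(\w)\rangle$ tends to $0$ because, by~\eqref{inequww*}, $\langle w(\w),w^*(\w)\rangle$ is a fixed positive number; by~\eqref{lambda} the denominator contributes $\bar\lambda_1$; and by~\eqref{lambda-2} together with Theorem~\ref{teor3.7}(4) the numerator contributes $\bar\lambda_1^*=\bar\lambda_1$. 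Since $\bar\lambda_1>-\infty$ by hypothesis, these two limits are finite, and subtracting them yields $\lim_{t\to\infty}\frac1t\ln\langle w(\theta_t\w),w^*(\theta_t\w)\rangle=0$.

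For the limit as $t\to-\infty$ I would repeat the computation after substituting $\theta_{-t}\w$ for $\w$ in~\eqref{UU*}; this is exactly the substituted identity already used in the proof of Theorem~\ref{teor3.7}(4), and it gives, for $t\geq 0$,
\begin{equation*}
\langle w(\theta_{-t}\w),w^*(\theta_{-t}\w)\rangle=\frac{\n{U_{\theta_{-t}\w}(t)\,w(\theta_{-t}\w)}}{\n{U^*_\w(t)\,w^*(\w)}}\,\langle w(\w),w^*(\w)\rangle\,.
\end{equation*}
Here~\eqref{lambda-1} shows the numerator contributes $\bar\lambda_1$ and~\eqref{lambda-3} shows the denominator contributes $\bar\lambda_1^*=\bar\lambda_1$, while the pairing factor is again asymptotically negligible; hence $\lim_{t\to\infty}\frac1t\ln\langle w(\theta_{-t}\w),w^*(\theta_{-t}\w)\rangle=0$, i.e.\ $\lim_{t\to-\infty}\frac1t\ln\langle w(\theta_t\w),w^*(\theta_t\w)\rangle=0$.

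I do not expect a real obstacle: the argument is bookkeeping once the relations~\eqref{lambda}--\eqref{lambda-3} and the equality $\bar\lambda_1=\bar\lambda_1^*$ of Theorem~\ref{teor3.7}(4) are invoked, all of which are available on $\widetilde\Omega_1\cap\widetilde\Omega_1^*$ (contained in the full-measure sets of Theorems~\ref{teor3.6}(3) and~\ref{teor3.7}(3), hence in $\bar\Omega_1\cap\bar\Omega_1^*$ where~\eqref{inequww*} holds). The only point worth stressing is the role of the hypothesis $\bar\lambda_1>-\infty$: it is precisely what makes the two norm-contributions finite, so that their difference is well defined; if $\bar\lambda_1=-\infty$ the right-hand side would be the indeterminate form $(-\infty)-(-\infty)$ and the conclusion can fail. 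Note also that only the strict positivity half of~\eqref{inequww*}, not the bound $\le 1$, is needed to make $\frac1t\ln\langle w(\w),w^*(\w)\rangle\to0$.
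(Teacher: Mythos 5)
Your proof is correct and takes essentially the same route as the paper's: both arguments are applications of identity~\eqref{UU*} together with the limits~\eqref{lambda}--\eqref{lambda-3}, the equality $\bar\lambda_1=\bar\lambda_1^*$ from Theorem~\ref{teor3.7}(4), and the finiteness $\bar\lambda_1>-\infty$. You merely streamline the presentation --- the paper first derives $\limsup_{t\to\infty}\tfrac1t\ln\langle w(\theta_t\w),w^*(\theta_t\w)\rangle=0$ from~\eqref{desilambdas} and then rules out a negative subsequential limit by contradiction, whereas you isolate $\langle w(\theta_t\w),w^*(\theta_t\w)\rangle$ in~\eqref{UU*} and take $\tfrac1t\ln$ directly to obtain the exact limit $\bar\lambda_1^*-\bar\lambda_1=0$, which is cleaner and, as you correctly note, makes the role of $\bar\lambda_1>-\infty$ transparent and dispenses with the upper bound in~\eqref{inequww*}.
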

\begin{proof}
We prove the result for $t \to \infty$, the other limit is completely analogous.  Since $\bar\lambda_1 = \bar\lambda_1^*$, from inequality~\eqref{desilambdas} we deduce that
\[
\limsup_{t\to\infty} \frac{1}{t}\ln \langle w(\theta_t\w),w^*(\theta_t\w)\rangle=0\,.
\]
Assume now on the contrary to the assertion of the lemma that there is a sequence $t_n\uparrow\infty$ such that
\[
\lim_{n\to\infty} \frac{1}{t_n}\ln \langle w(\theta_{t_n}\w),w^*(\theta_{t_n}\w)\rangle = a < 0\,.
\]
Again from~\eqref{desilambdas} we would obtain $\bar\lambda_1^*=\bar\lambda_1+a$, a contradiction.
\end{proof}
We include part of the proof of the next result, although similar to the proof of Proposition~5.11 of~\cite{MiShPart1}, to remark the differences derived from the new assumption (A3) and the fact that we do not have a Banach lattice but an ordered separable Banach space with positive cone $X^+$ normal and reproducing.
\begin{prop}
\label{prop5.11}
 Under assumptions {\rm (A1)} and {\rm (A3)}, let $\widetilde\Omega_2$ be the invariant subset of {\rm Proposition~\ref{prop5.10}}. If $\bar\lambda_1 > -\infty$, there exists a $\bar\sigma_2 > 0$ such that
\[
\limsup_{t\to\infty}\frac{1}{t}\ln\sup_{u\in X,\,\n{u}=1}\left\{\,\left\|\frac{U_\w(t)\,u}{\n{w_\w(t)}}-\frac{\langle u,w^*(\w)\rangle}{\langle w(\w),w^*(\w)\rangle}\,w(\theta_t\w)\right\|\,\right\}\leq -\bar\sigma_2
\]
for each $\w\in\widetilde\Omega_2$.
\end{prop}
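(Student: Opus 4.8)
The plan is to prove the estimate first on $\Sigma=C_\be\cap S_1(X^+)$, where Proposition~\ref{prop5.10} is available, and then to bootstrap to an arbitrary unit vector $u\in X$ via a decomposition into positive vectors. Fix $J$ with $0>J>\int_\Omega\ln p\,d\PP$. The first and main step is the claim that for every $\w'\in\widetilde\Omega_2$, every $u'\in\Sigma$ and every $s\ge 1$,
\[
\left\| \frac{U_{\w'}(s)\,u'}{\n{w_{\w'}(s)}} - \frac{\langle u',w^*(\w')\rangle}{\langle w(\w'),w^*(\w')\rangle}\,w(\theta_s\w') \right\| \;\le\; 2\,\textit{osc}\!\left( \frac{U_{\w'}(s)\,u'}{\n{w_{\w'}(s)}} \,\Big/\, w(\theta_s\w') \right),
\]
which Proposition~\ref{prop5.10} bounds by $2\,C_5(J,\w')\,e^{Js}$. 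To prove the displayed inequality, set $x=U_{\w'}(s)u'/\n{w_{\w'}(s)}$, $y=w(\theta_s\w')$ and $\alpha'=\langle u',w^*(\w')\rangle/\langle w(\w'),w^*(\w')\rangle$. By Lemma~\ref{lm:comparable-with-e}, $U_{\w'}(s)u'\in C_\be$, so $x$ and $y$ both lie in $C_\be$; hence $m:=m(x/y)$ and $M:=M(x/y)$ are finite, $m\,y\le x\le M\,y$, and since the norm is monotone and $\n{y}=1$ we get $\n{x-m\,y}\le(M-m)\n{y}=\textit{osc}(x/y)$. Pairing $m\,y\le x\le M\,y$ with $w^*(\theta_s\w')\in(X^*)^+$ and dividing by $\langle y,w^*(\theta_s\w')\rangle=\langle w(\theta_s\w'),w^*(\theta_s\w')\rangle>0$ (positive by~\eqref{inequww*}) shows that $\langle x,w^*(\theta_s\w')\rangle/\langle y,w^*(\theta_s\w')\rangle$ lies between $m$ and $M$; and, using~\eqref{dual-definition}, the fact that $U^*_{\theta_s\w'}(s)\,w^*(\theta_s\w')$ is a positive multiple of $w^*(\w')$ (Theorem~\ref{teor3.7}(1)), and the reciprocity identity~\eqref{UU*}, that ratio is exactly $\alpha'$. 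Therefore $|m-\alpha'|\le M-m=\textit{osc}(x/y)$, and the triangle inequality closes the step.

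Next I would extend this, first to an arbitrary $v\in X^+$ and $t\ge 2$:
\[
\left\| \frac{U_\w(t)\,v}{\n{w_\w(t)}} - \frac{\langle v,w^*(\w)\rangle}{\langle w(\w),w^*(\w)\rangle}\,w(\theta_t\w) \right\| \;\le\; \frac{2\,C_5(J,\theta_1\w)\,\n{U_\w(1)\,v}}{\n{w_\w(1)}}\,e^{J(t-1)}.
\]
If $U_\w(1)v=0$ both sides vanish, because~\eqref{dual-definition} and Theorem~\ref{teor3.7}(1) give $\langle v,w^*(\w)\rangle=\n{U^*_{\theta_1\w}(1)\,w^*(\theta_1\w)}^{-1}\langle U_\w(1)\,v,\,w^*(\theta_1\w)\rangle$; otherwise $U_\w(1)v\in C_\be$ by~(A3), so $U_\w(1)v/\n{U_\w(1)v}\in\Sigma$, and applying the first step at the base point $\theta_1\w\in\widetilde\Omega_2$ with time $s=t-1$, then multiplying by $\n{U_\w(1)v}$, yields the bound once one uses the cocycle property, the identity $\n{w_\w(t)}=\n{w_\w(1)}\,\n{w_{\theta_1\w}(t-1)}$, and once more~\eqref{dual-definition}, Theorem~\ref{teor3.7}(1) and the case $t=1$ of~\eqref{UU*} to rewrite the coefficient as $\langle v,w^*(\w)\rangle/\langle w(\w),w^*(\w)\rangle$. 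Then, since $X^+$ is normal and reproducing, there is a constant $M\ge 1$ depending only on $X$ and its norm such that each $u\in X$ can be written $u=u^+-u^-$ with $u^\pm\in X^+$ and $\n{u^\pm}\le M\,\n{u}$; applying the above to $u^+$ and $u^-$ and subtracting --- the principal terms combine into the required one because $\langle u^+,w^*(\w)\rangle-\langle u^-,w^*(\w)\rangle=\langle u,w^*(\w)\rangle$ --- we obtain, for $\n{u}=1$ and $t\ge 2$,
\[
\left\| \frac{U_\w(t)\,u}{\n{w_\w(t)}} - \frac{\langle u,w^*(\w)\rangle}{\langle w(\w),w^*(\w)\rangle}\,w(\theta_t\w) \right\| \;\le\; \frac{4\,M\,\n{U_\w(1)}\,C_5(J,\theta_1\w)}{\n{w_\w(1)}}\,e^{J(t-1)}.
\]

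To conclude, observe that $\n{w_\w(1)}=\n{U_\w(1)\,w(\w)}>0$ since $U_\w(1)\,w(\w)\in C_\be$ (Lemma~\ref{lm:comparable-with-e}), so the right-hand side above is a finite positive constant times $e^{J(t-1)}$; taking $\frac1t\ln$ of the supremum over $\n{u}=1$ and letting $t\to\infty$ bounds the $\limsup$ by $J$. Since $J\in(\int_\Omega\ln p\,d\PP,0)$ was arbitrary, the $\limsup$ is $\le\int_\Omega\ln p\,d\PP$, and one may take $\bar\sigma_2:=-\int_\Omega\ln p\,d\PP\in(0,\infty]$. (In the route parallel to~\cite{MiShPart1} that instead normalizes $U_\w(t)u$ and invokes Lemma~\ref{lm:projective-vs-norm}, the hypothesis $\bar\lambda_1>-\infty$ enters through Lemma~\ref{limit0}, forcing the subexponential behaviour of $\n{U_\w(t)u}/\n{w_\w(t)}$ needed to pass from the projective to the norm estimate.)

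The main obstacle is the first step: converting the \emph{oscillation} bound of Proposition~\ref{prop5.10} into a \emph{norm} bound for $x-\alpha'y$. This forces one to identify the correct scalar $\alpha'$, and the only device pinning it between $m(x/y)$ and $M(x/y)$ is the dual orbit $w^*$, through the reciprocity relation~\eqref{UU*}. A secondary, more computational difficulty is that $X$ is merely an ordered Banach space with normal reproducing cone, not a Banach lattice, so one cannot use $|u|$ or $u^\pm$ as lattice operations and must rely on the bounded positive decomposition above; and because~(A3), unlike~(A3-O), allows $U_\w(1)u^\pm=0$, the corresponding degenerate subcases must be handled separately.
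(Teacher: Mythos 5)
Your proof is correct, and it takes a genuinely different route at the key step. The paper first proves that $m\bigl(\widetilde U_\w(t)u/w(\theta_t\w)\bigr)$ and $M\bigl(\widetilde U_\w(t)u/w(\theta_t\w)\bigr)$ converge (Cauchy-style, via Proposition~\ref{prop5.10}) to a common limit $\mu(u,\w)$, converts the oscillation bound into a norm estimate $\n{\widetilde U_\w(t)u-\mu\,w(\theta_t\w)}\le 3C_5e^{Jt}$ via normality, and only then identifies $\mu(u,\w)=\langle u,w^*(\w)\rangle/\langle w(\w),w^*(\w)\rangle$ by pairing with $w^*(\theta_t\w)$ and invoking Lemma~\ref{limit0} --- this is precisely where $\bar\lambda_1>-\infty$ enters, since one needs $\langle w(\theta_t\w),w^*(\theta_t\w)\rangle$ to decay subexponentially in order to force the coefficient to vanish. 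You instead show directly, for each fixed time, that the scalar $\alpha'=\langle u,w^*(\w)\rangle/\langle w(\w),w^*(\w)\rangle$ is sandwiched between $m(x/y)$ and $M(x/y)$ --- the pairing with $w^*(\theta_s\w')$ together with the reciprocity identity~\eqref{UU*} gives $\langle x,w^*(\theta_s\w')\rangle/\langle y,w^*(\theta_s\w')\rangle=\alpha'$ exactly --- which bounds $\n{x-\alpha'y}\le 2\,\textit{osc}(x/y)$ with no limiting argument at all. This is cleaner (a constant $2$ rather than $3$, but more importantly no appeal to convergence of $m,M$), and it has the noteworthy side effect that the hypothesis $\bar\lambda_1>-\infty$ is never used: Lemma~\ref{limit0} drops out of the argument entirely, since $\langle w(\theta_s\w'),w^*(\theta_s\w')\rangle>0$ from~\eqref{inequww*} is all you need. (Your closing parenthetical slightly misdescribes the paper's own route --- it does not normalize $U_\w(t)u$ nor invoke Lemma~\ref{lm:projective-vs-norm} here, it uses normality directly on the unnormalized vectors --- but your identification of Lemma~\ref{limit0} as the locus of the $\bar\lambda_1>-\infty$ hypothesis is accurate.) The extension to general $u\in X^+$ via $U_\w(1)$, the treatment of the degenerate case $U_\w(1)u=0$, and the final positive decomposition $u=u^+-u^-$ with the bounded-decomposition constant from~\cite{AbAlBu} all match the paper's handling.
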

\begin{proof}
Denote $\widetilde U_\w(t)\,u:=U_\w(t)\,u/\n{w_\w(t)}$. As in Proposition~5.11 of~\cite{MiShPart1}, from Proposition~\ref{prop5.10} we prove that
$m(\widetilde U_\w(t)\,u/w(\theta_t\w))$ and $M(\widetilde U_\w(t)\,u/w(\theta_t\w))$ both converge
to a common limit denoted by $\mu(u,\w)$ and
 \begin{align*}
 \mu(u,\w)-m(\widetilde U_\w(t)\,u/w(\theta_t\w))&\leq C_{5}(J,\w)\,e^{Jt}\,,\\
 M(\widetilde U_\w(t)\,u/w(\theta_t\w))-\mu(u,\w)& \leq C_{5}(J,\w)\,e^{Jt}
 \end{align*}
 for each $t\geq 1$ and $u\in \Sigma=C_\be\cap S_1(X^+)$. Moreover, since
 \begin{align*}
 \bigl(m(\widetilde U_\w(t)\,u/w(\theta_t\w))-\mu(u,\w)\bigr)\,w(\theta_t\w)&\leq \widetilde U_\w(t)\,u-\mu(t,\w)\,w(\theta_t\w) \\
 &\leq \bigl(M(\widetilde U_\w(t)\,u/w(\theta_t\w))-\mu(u,\w)\bigr)\,w(\theta_t\w)\,,
 \end{align*}
 and $X^+$ is normal  we deduce that
 \begin{equation}\label{expodecay}
   \left\|\,\frac{U_\w(t)\,u}{\n{w_\w(t)}}-\mu(u,\w)\,w(\theta_t\w)\,\right\|\leq 3\, C_{5}(J,\w)\,e^{Jt}
 \end{equation}
 for each $t\geq 1$ and $u \in \Sigma = C_\be \cap S_1(X^+)$.
 Next we fix $u\in \Sigma$ and $\w\in \widetilde\Omega_2$. It is not hard to check, as in Proposition~5.11 of~\cite{MiShPart1}, that
 \begin{multline*}
 \left\langle \frac{U_\w(t)\,u}{\n{w_\w(t)}}-\mu(u,\w)\, w(\theta_t\w),w^*(\theta_t\w)\right\rangle \\ =\left(\frac{\langle u, w^*(\w)\rangle}{\langle w(\w),\w^*(\w)\rangle}-\mu(u,\w)\right) \langle w(\theta_t\w), w^*(\theta_t\w)\rangle\,,
 \end{multline*}
 which together with the exponential decay~\eqref{expodecay} and Lemma~\ref{limit0} provides
 \[\mu(u,\w)=\frac{\langle u, w^*(\w)\rangle}{\langle w(\w),\w^*(\w)\rangle}\,.\]
 Therefore, we have proved that
\begin{equation}\label{desiCe}
 \left\|\,\frac{U_\w(t)\,u}{\n{w_\w(t)}}- \frac{\langle u, w^*(\w)\rangle}{\langle w(\w),\w^*(\w)\rangle} \,w(\theta_t\w)\,\right\|\leq 3\, C_{5}(J,\w)\,\n{u}\,e^{Jt}
 \end{equation}
for each $t\geq 1$, $\w\in\widetilde\Omega_2$ and $u \in C_\be$. Now let $u \in X^{+}$. If $U_\w(1)\,u = 0$ the left-hand side of the previous inequality vanishes for each $t\geq 1$ because $U_\w(t)\,u=0$ and from~\eqref{prop_w^*} and~\eqref{dual-definition} we deduce that
\[\langle u,w^*(\w)\rangle=\left\langle u, \frac{U^*_{\theta_1\w}(1)\,w^*(\theta_1\w)}{\n{U^*_{\theta_1\w}(1)\,w^*(\theta_1\w)}} \right\rangle=\left\langle U_\w(1)\,u, \frac{w^*(\theta_1\w)}{\n{U^*_{\theta_1\w}(1)\,w^*(\theta_1\w)}}\right\rangle=0\,.\]
Next,  a straightforward computation shows that, if $\widetilde u=U_\w(1)\,u$ we have
\begin{equation}\label{desitm2}
\frac{U_\w(t)\,u}{\n{w_\w(t)}}-\mu(u,\w)\, w(\theta_t\w)=\frac{1}{\n{w_\w(1)}}\left( \frac{U_{\theta_1\w}(t-1)\,\widetilde u}{\n{w_{\theta_1\w}(t-1)}}-\mu(\widetilde u,\theta_1\w)\, w(\theta_{t-1}\theta_1\w)\right)
\end{equation}
for each $t\geq 2$. Therefore,  if $u\in X^+$ and $U_\w(1)\,u\neq 0$, from condition (A3), i.e.~\eqref{focusing}, we deduce that $\widetilde u=U_\w(1)\,u\in C_\be$, and consequently from~\eqref{desiCe}
\[ \left\|\,\frac{U_\w(t)\,u}{\n{w_\w(t)}}- \frac{\langle u, w^*(\w)\rangle}{\langle w(\w),\w^*(\w)\rangle} \,w(\theta_t\w)\,\right\|\leq \frac{3}{\n{w_\w(1)}}\, C_{5}(J,\theta_1\w)\,\n{U_\w(1)}\n{u}\,e^{J(t-1)}\]
for each $t\geq 2$.
\par
\smallskip
Finally, since $X^+$ is reproducing, i.e. $X = X^+ - X^+$, there is an $\alpha>0$ such that for each $u\in X$ there are $u^+,v^+ \in X^+$, not necessarily unique, such that $u = u^+ - v^+$,  $\n{u^+} \leq \alpha\n{u}$ and $\n{v^+} \leq \alpha\n{u}$ (see~\cite[Thm. 2.2]{AbAlBu}).  If we apply the previous inequalities to the decomposition $u^+$ and $v^+ \in X^+$ for $u\in X$ with $\n{u} = 1$ we get
\begin{equation}
\label{b(w,J)}
\left\|\,\frac{U_\w(t)\,u}{\n{w_\w(t)}}- \frac{\langle u, w^*(\w)\rangle}{\langle w(\w),\w^*(\w)\rangle} \,w(\theta_t\w)\,\right\|\leq  b(\w,J)\,e^{Jt}
\end{equation}
for each $t\geq 2$ and $\w\in\widetilde \Omega_2$, where $b(\w,J) = 6 \, \alpha \, C_{5}(J,\theta_1\w) \, e^{-J} \, \n{U_\w(1)}/\n{w_\w(1)}$, which finishes the proof.
\end{proof}
The next theorem shows the existence of a generalized exponential separation of type II. We maintain the notation of the previous results.
\begin{teor}
\label{thm:exponential-separation}
Under assumptions {\rm (A1)} and {\rm (A3)}, let $\bar\lambda_1$ be the generalized principal Lyapunov exponent of {\rm Theorem~\ref{teor3.6}} and assume that $\bar\lambda_1 > -\infty$.  Then there is an invariant set $\widetilde\Omega_0$ of full measure $\PP(\widetilde\Omega_0)=1$ such that
\begin{itemize}
\item[(1)]
The family $\{ P(\w)\}_{\w\in\widetilde\Omega_0}$ of projections associated with invariant decomposition $ E_1(\w)\oplus  F_1(\w)=X$ is strongly measurable and tempered.
\item[(2)]
$F_1(\w)\cap X^+=\{u\in X^+ \mid U_\w(1)\,u=0\}$ for any $\w\in\widetilde\Omega_0$.
\item[(3)]
For any $\w\in\widetilde\Omega_0$ and $u\in X\setminus F_1(\w)$ with $U_\w(1)\,u\neq 0$  there holds
 \[ \lim_{t\to\infty}\frac{1}{t}\ln\n{U_\w(t)}=\lim_{t\to\infty}\frac{1}{t}\ln \n{U_\w(t)\,u}=\bar\lambda_1\,.\]
 \item[(4)] There exists $\widetilde\sigma\in(0,\infty]$ and $\bar\lambda_2=\bar\lambda_1-\widetilde\sigma$ such that
\[ \lim_{t\to\infty}\frac{1}{t}\ln \frac{\n{U_\w(t)|_{F_1(\w)}}}{\n{U_\w(t)\,w(\w)}}=-\widetilde{\sigma}
\]
and
\[\lim_{t\to\infty}\frac{1}{t}\ln\n{U_\w(t)|_{F_1(\w)}}=\bar\lambda_2\]
for each $\w\in\widetilde\Omega_0$, that is, $\Phi$ admits a generalized exponential separation of type {\rm II}.
\end{itemize}
\end{teor}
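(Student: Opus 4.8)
The plan is to define $\widetilde\Omega_0$ as a $\theta$-invariant subset of $\widetilde\Omega_2$ (the set of Proposition~\ref{prop5.10}) of full measure on which all the almost-everywhere conclusions invoked below hold, and to take $\widetilde E(\w)=E_1(\w)=\spanned\{w(\w)\}$ and $\widetilde F(\w)=F_1(\w)=\{u\in X:\langle u,w^*(\w)\rangle=0\}$, then verify items (1)--(4). The whole argument rests on two identities, valid for every $u\in X$ and every $t\geq 0$. First, from~\eqref{dual-definition} and~\eqref{prop_w^*},
\begin{equation*}
\langle U_\w(t)\,u,w^*(\theta_t\w)\rangle=\n{U^{*}_{\theta_t\w}(t)\,w^*(\theta_t\w)}\,\langle u,w^*(\w)\rangle\,;
\end{equation*}
this shows at once that $U_\w(t)F_1(\w)\subset F_1(\theta_t\w)$, which together with $U_\w(t)E_1(\w)=E_1(\theta_t\w)$ from Theorem~\ref{teor3.6}(1) makes the decomposition $E_1(\w)\oplus F_1(\w)=X$ invariant. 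Second, writing $Q(\w)=\mathrm{Id}-P(\w)$ for the projection onto $E_1(\w)$ along $F_1(\w)$ and using $\n{w_\w(t)}\,w(\theta_t\w)=U_\w(t)\,w(\w)$,
\begin{equation*}
\frac{U_\w(t)\,u}{\n{w_\w(t)}}-\frac{\langle u,w^*(\w)\rangle}{\langle w(\w),w^*(\w)\rangle}\,w(\theta_t\w)=\frac{U_\w(t)\,P(\w)\,u}{\n{w_\w(t)}}\,,
\end{equation*}
so the vector appearing in Proposition~\ref{prop5.11} is exactly $\n{U_\w(t)\,P(\w)\,u}/\n{w_\w(t)}$.

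For part~(2), which is item~(i) of Definition~\ref{def:exponential-separation}: if $u\in X^+$ and $U_\w(1)\,u=0$, the first identity with $t=1$ gives $\langle u,w^*(\w)\rangle=0$, so $u\in F_1(\w)$; conversely, if $u\in F_1(\w)\cap X^+$ with $U_\w(1)\,u\neq 0$, then~\eqref{focusing} gives $\beta(\w,u)\,\be\leq U_\w(1)\,u$ with $\beta(\w,u)>0$, and pairing with $w^*(\theta_1\w)\in(X^*)^+$ yields, again via the first identity, $\langle u,w^*(\w)\rangle>0$ (using $\langle\be,w^*(\theta_1\w)\rangle>0$, proved in the paragraph leading to~\eqref{inequww*}), contradicting $u\in F_1(\w)$. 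For part~(1): strong measurability of $\{P(\w)\}$ is immediate from formula~\eqref{defiPw} and measurability of $w$, $w^*$; since $\n{w(\w)}=\n{w^*(\w)}=1$ and $0<\langle w(\w),w^*(\w)\rangle\leq 1$ by~\eqref{inequww*}, one has $1\leq\n{P(\w)}\leq 1+\langle w(\w),w^*(\w)\rangle^{-1}$, whence $0\leq\ln\n{P(\theta_t\w)}\leq\ln 2-\ln\langle w(\theta_t\w),w^*(\theta_t\w)\rangle$, and temperedness follows from Lemma~\ref{limit0} (here $\bar\lambda_1>-\infty$ is used).

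For part~(4), I would first combine Proposition~\ref{prop5.11} with the second identity and the sandwich
\begin{equation*}
\n{U_\w(t)|_{F_1(\w)}}\leq\sup_{\n{u}=1}\n{U_\w(t)\,P(\w)\,u}\leq\n{P(\w)}\,\n{U_\w(t)|_{F_1(\w)}}
\end{equation*}
(the left inequality since $P(\w)v=v$ for $v\in F_1(\w)$) to get $\limsup_{t\to\infty}\tfrac1t\ln\bigl(\n{U_\w(t)|_{F_1(\w)}}/\n{w_\w(t)}\bigr)\leq-\bar\sigma_2<0$. To upgrade this $\limsup$ to a genuine limit, I would note that $b(t,\w):=\ln\bigl(\n{U_\w(t)|_{F_1(\w)}}/\n{w_\w(t)}\bigr)$ is subadditive along the cocycle, $b(t+s,\w)\leq b(s,\w)+b(t,\theta_s\w)$, using the invariance $U_\w(s)F_1(\w)\subset F_1(\theta_s\w)$ and $\n{w_\w(t+s)}=\n{w_\w(s)}\,\n{w_{\theta_s\w}(t)}$, with $b(1,\cdot)^+\in L_1\OFP$ by~(A1) and $\ln\n{w_\w(1)}\in L_1\OFP$ (Theorem~\ref{teor3.6}(3), $\bar\lambda_1>-\infty$). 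Kingman's subadditive ergodic theorem applied to $(\theta_n)_{n\in\Z}$ then gives a constant $-\widetilde\sigma\in[-\infty,\infty)$ with $\lim_{n\to\infty}\tfrac1n b(n,\w)=-\widetilde\sigma$ a.e.; the passage to real time (as in the proof of Theorem~\ref{teor3.6}(3), following Lian and Lu~\cite{Lian-Lu}) promotes this to $\lim_{t\to\infty}\tfrac1t b(t,\w)=-\widetilde\sigma$ on $\widetilde\Omega_0$, and the $\limsup$ bound forces $\widetilde\sigma\geq\bar\sigma_2>0$. Setting $\bar\lambda_2:=\bar\lambda_1-\widetilde\sigma$ and adding $\tfrac1t\ln\n{w_\w(t)}\to\bar\lambda_1$ yields $\lim_{t\to\infty}\tfrac1t\ln\n{U_\w(t)|_{F_1(\w)}}=\bar\lambda_2$.

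Finally, for part~(3): $\lim_{t\to\infty}\tfrac1t\ln\n{U_\w(t)}=\bar\lambda_1$ is Remark~\ref{lyapunovexponent}; and if $u\notin F_1(\w)$ then $\langle u,w^*(\w)\rangle\neq 0$, so by Proposition~\ref{prop5.11} (applied to $u/\n{u}$) the vector $U_\w(t)\,u/\n{w_\w(t)}$ converges in norm to a nonzero scalar multiple of $w(\theta_t\w)$, hence, as $\n{w(\theta_t\w)}=1$, $\n{U_\w(t)\,u}/\n{w_\w(t)}\to|\langle u,w^*(\w)\rangle|/\langle w(\w),w^*(\w)\rangle\neq 0$, so $\tfrac1t\ln\n{U_\w(t)\,u}\to\bar\lambda_1$. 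Together with Theorem~\ref{teor3.6}(5), items (1), (2) and (4) show that $\{E_1(\cdot),F_1(\cdot),\widetilde\sigma\}$ generates a generalized exponential separation of type~II. The main obstacle is precisely the upgrade from $\limsup$ to $\lim$ in part~(4): the rest is bookkeeping, but there one must set up the subadditive cocycle $b(t,\w)$ and its integrability correctly and, in particular, accommodate the degenerate case $\bar\lambda_2=-\infty$ (equivalently $\widetilde\sigma=\infty$), which needs the form of Kingman's theorem permitting a $-\infty$ limit.
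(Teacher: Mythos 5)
Your proposal is correct and follows essentially the same route as the paper: parts (1)--(3) use the same decomposition $E_1(\w)\oplus F_1(\w)$, the projection formula~\eqref{defiPw}, Lemma~\ref{limit0}, the focusing structure for $F_1(\w)\cap X^+$, and Proposition~\ref{prop5.11}, just packaged slightly more compactly (e.g.\ your ``first identity'' is the paper's relation~\eqref{UU*} in disguise, and you use it to streamline the proof of (2)). For part (4) the paper defers to \cite[Thm.~3.8(4)]{MiShPart1}, so you have in fact reconstructed the omitted argument, and your reconstruction --- the sandwich $\n{U_\w(t)|_{F_1(\w)}}\le\sup_{\n{u}=1}\n{U_\w(t)P(\w)u}\le\n{P(\w)}\,\n{U_\w(t)|_{F_1(\w)}}$, the subadditive cocycle $b(t,\w)$, Kingman, and the Lian--Lu passage to continuous time --- is the intended one; the only caution is to spell out, as you flag, the integrability $\sup_{0\le s\le 1}b(s,\cdot)^+\in L_1\OFP$ and the $\widetilde\sigma=\infty$ case when applying the subadditive ergodic theorem.
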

\begin{proof}
(1) The strong measurability follows from~\eqref{defiPw} and the measurability of $w$ and $w^*$.
Next we show that it is a tempered family, i.e.~\eqref{tempered} holds. Let $\widetilde{\Omega}_1$ and $\widetilde\Omega_1^*$ be the invariant sets of Theorem~\ref{teor3.6} and~\ref{teor3.7}.
For $\w \in \widetilde{\Omega}_1 \cap \widetilde\Omega_1^*$ we define $\widetilde P(\w)= \mathrm{Id}_{X}- P(\w)$, that is,
\[
\widetilde P(\w)\,u=\frac{\langle u,w^*(\w)\rangle}{\langle w(\w),w^*(\w)\rangle}\,w(\w)\,,\quad u\in X\,.
\]
Therefore,  $1\leq\n{\widetilde P(\w)}\leq 1/\langle w(\w),w^*(\w)\rangle$  and we deduce that
$0\leq \ln \n{\widetilde P(\w)}\leq -\ln\langle w(\w),w^*(\w)\rangle$. Since $\bar\lambda_1\neq-\infty$, Lemma~\ref{limit0} shows that
\[ \lim_{t\to\pm\infty} \frac{\ln\n{\widetilde P(\theta_t\w)}}{t}=0\quad \text{ for }\w\in \widetilde\Omega_1 \cap \widetilde\Omega_1^*\,.\]
Consequently,  $\n{P(\theta_t\w)}\leq 1+ \n{\widetilde P(\theta_t\w)} = \n{\widetilde P(\theta_t\w)}\left(1+ \n{\widetilde P(\theta_t\w)}^{-1}\right)$ and
\[\limsup_{t\to\pm\infty}\frac{\ln\n{P(\theta_t\w)}}{t}\leq \limsup_{t\to\pm\infty}\frac{\ln\left(1+\n{\widetilde P(\theta_t\w)}^{-1}\right)}{t}=0 \]
because $1\leq 1+\n{\widetilde P(\theta_t\w)}^{-1}\leq 2\,$. The inequality
\[ \liminf_{t\to\pm\infty}\frac{\ln\n{P(\theta_t\w)}}{t}\geq 0\]
follows from $\n{P(\theta_t\w)}\geq 1$, and~\eqref{tempered} holds, which finishes the proof of (1) when $\w \in \widetilde\Omega_1 \cap \widetilde\Omega_1^*$.\par\smallskip
(2) Recall that $F_1(\w)=\{u\in X\mid \langle u,w^*(\omega) \rangle = 0\}$ and from~\eqref{prop_w^*},~\eqref{dual-definition}, and the definition of $w^*_\w$ we deduce that
\begin{equation}\label{uw*(w)}
\langle u,w^*(\w)\rangle = \left\langle u, \frac{U^*_{\theta_1\w}(1)\,w^*(\theta_1\w)} {\n{U^*_{\theta_1\w}(1)\,w^*(\theta_1\w)}} \right\rangle = \left\langle U_\w(1)\,u, \frac{w^*(\theta_1\w)}{\n{w^{*}_{\omega}(1)}} \right\rangle\,.
\end{equation}
Moreover, from the focusing condition (A3) we deduce that if $u \in X^+\setminus\{0\}$ we have two options $U_\w(1)\,u=0$ and hence $u\in F_1(\w)\cap X^+$, or $U_\w(1)\,u\in C_\be$. In this case, we claim that $\langle u,w^*(\w)\rangle >0$. From~\eqref{focusing},~\eqref{uw*(w)}, and~\eqref{prop_w^*} we obtain
\begin{equation}\label{desiuw*(w)}
\langle u,w^*(\w)\rangle\geq \frac{\beta(\w,u)}{\n{w_\w^*(1)}}\,\langle \be,w^*(\theta_1\w)\rangle=\frac{\beta(\w,u)}{\n{w_\w^*(1)}}\,\left\langle \be, \frac{U^*_{\theta_2\w}(1)\,w^*(\theta_2\w)}{\n{U^*_{\theta_2\w}(1)\,w^*(\theta_2\w)}} \right\rangle\,.
\end{equation}
From the focusing condition (A3)* for $X^*$, i.e., inequality~\eqref{focusing*},
\[ U_{\theta_2\w}^*(1)\,w^*(\theta_2\w)\geq \beta^*(\theta_2\w,w^*(\theta_2\w))\,U_{\theta_2\w}^*(1)\,\be^*\,,\]
and hence, together with~\eqref{desiuw*(w)}, \eqref{prop_w^*}, and~\eqref{defiw*w(t)}  yields
\[ \langle u,w^*(\w)\rangle \geq \frac{\beta(\w,u)\,\beta^*(\theta_2\w,w^*(\theta_2\w))} {\n{w_\w^*(1)}\,\n{w^*_{\theta_2\w}(1)}}\, \langle \be, U_{\theta_2\w}^*(1)\,\be^*\rangle\,. \]
Finally, again from~\eqref{dual-definition} and~\eqref{focusing} we conclude that
 \[\langle \be, U_{\theta_2\w}^*(1)\,\be^*\rangle=\langle U_{\theta_1\w}(1)\,\be,\be^*\rangle\geq \beta(\theta_1\w,\be)\,\langle \be,\be^*\rangle>0\,,\]
and, therefore $\langle u,w^*(\w)\rangle>0$, as claimed, which finishes the proof of (2) when $\w\in \widetilde\Omega_1\cap\widetilde\Omega_1^*$.
\par\smallskip
(3) From Remark~\ref{lyapunovexponent} we know that
\[\lim_{t\to\infty}\frac{1}{t}\ln\n{U_\w(t)}=\bar\lambda_1\quad \text{ for each } \w\in\widetilde\Omega_1\,.\]
Now, let $\widetilde\Omega_2 \subset \widetilde{\Omega}_1 \cap \widetilde\Omega_1^*$ be the invariant subset of {\rm Proposition~\ref{prop5.10}}, fix $\w\in\widetilde\Omega_2$ and let $u\in X\setminus F_1(\w)$ with $U_\w(1)\,u\neq 0$. We can decompose $u=u_1+u_2$ with
\[u_2=\frac{\langle u,w^*(\w)\rangle}{\langle w(\w),w^*(\w)\rangle}\,w(\w)\,,\]
and since $\langle u,w^*(\w)\rangle\neq 0$, then $\n{u_2}>0$.
From~\eqref{prop_w} we deduce that for each $t\geq 0$
\[
U_\w(t)\,u_2=\frac{\langle u,w^*(\w)\rangle}{\langle w(\w),w^*(\w)\rangle}\,U_\w(t)\,w(\w)=
\frac{\langle u,w^*(\w)\rangle}{\langle w(\w),w^*(\w)\rangle}\n{U_\w(t)\,w(\w)}\,w(\theta_t\w)\,,
\]
and hence, $\n{U_\w(t)\,u_2}=\n{U_\w(t)\,w(\w)}\,\n{u_2}>0$. Moreover, from Proposition~\ref{prop5.11}, i.e. relation~\eqref{b(w,J)}, and~\eqref{prop_w}
\[\n{U_\w(t)\,u_1}=\biggl\|\,U_\w(t)\,u- \frac{\langle u, w^*(\w)\rangle}{\langle w(\w),\w^*(\w)\rangle} \,w(\theta_t\w)\,\biggr\|\leq  b(\w,J)\,\n{U_\w(t)\,w(\w)}\,\n{u}\,e^{Jt}\]
for each $t\geq 2$ and, consequently,
\[\n{U_\w(t)\,u}\geq \n{U_\w(t)\,u_2}-\n{U_\w(t)\,u_1}\geq \n{U_\w(t)\,w(\w)}(\n{u_2}-b(\w,J)\,\n{u}\,e^{Jt})\,.\]
Therefore,
\[
\liminf_{t\to\infty} \frac{1}{t} \, \ln{\lVert U_\w(t)\,u \rVert} \geq \liminf_{t\to\infty} \frac{1}{t} \, \ln{\lVert U_\w(t)\,w(\w) \rVert} \geq \bar\lambda_1\,,
\]
which together with Theorem~\ref{teor3.6}(5), i.e. relation~\eqref{ineqLyap}, finishes the proof of (3) when $\w\in \widetilde\Omega_2 \subset \widetilde{\Omega}_1 \cap \widetilde\Omega_1^*$.\par\smallskip
(4) We omit the proof of this part of the theorem because it follows step by step the proof of Theorem~3.8(4)~in~\cite{MiShPart1}. An invariant subset $\widetilde\Omega_0$ of full measure $\PP(\widetilde\Omega_0)=1$, and contained in $\widetilde \Omega_2 \subset \widetilde{\Omega}_1 \cap \widetilde\Omega_1^*$ where (4) holds is obtained. Consequently, all the previous results (1-3) apply for $\w \in \widetilde\Omega_0$.
\end{proof}
\begin{nota}
\label{rm:type-I}
The definition of {\em generalized exponential separation of type I\/} resembles Definition~\ref{def:exponential-separation}, the only difference being that (i) is replaced by
\begin{equation*}
\widetilde{F}(\w) \cap X^+ = \{0\}.
\end{equation*}
In~particular, generalized exponential separation of type I implies generalized exponential separation of type II.
\end{nota}
Under (A1) and (A3-O), and assuming additionally that $\bar{\lambda}_1 > - \infty$, Theorem~\ref{thm:exponential-separation} gives the existence of generalized exponential separation of type I:
\begin{teor}
\label{thm:exponential-separation-I}
Under assumptions {\rm (A1)} and {\rm (A3-O)}, let $\bar\lambda_1$ be the generalized principal Lyapunov exponent of {\rm Theorem~\ref{teor3.6}} and assume that $\bar\lambda_1 > -\infty$.  Then there is an invariant set $\widetilde\Omega_0$ of full measure $\PP(\widetilde\Omega_0)=1$ such that
\begin{itemize}
\item[(1)]
The family $\{ P(\w)\}_{\w\in\widetilde\Omega_0}$ of projections associated with invariant decomposition $ E_1(\w)\oplus  F_1(\w) = X$ is strongly measurable and tempered.
\item[(2)]
$F_1(\w)\cap X^+ = \{0\}$ for any $\w\in\widetilde\Omega_0$.
\item[(3)]
For any $\w\in\widetilde\Omega_0$ and $u\in X\setminus F_1(\w)$ there holds
\begin{equation*}
\lim_{t\to\infty}\frac{1}{t}\ln\n{U_\w(t)} = \lim_{t\to\infty}\frac{1}{t}\ln \n{U_\w(t)\,u} = \bar\lambda_1\,.
\end{equation*}
\item[(4)]
There exists $\widetilde\sigma\in(0,\infty]$ and $\bar\lambda_2 = \bar\lambda_1-\widetilde\sigma$ such that
\[
\lim_{t\to\infty}\frac{1}{t}\ln \frac{\n{U_\w(t)|_{F_1(\w)}}}{\n{U_\w(t)\,w(\w)}}=-\widetilde{\sigma}
\]
and
\[\lim_{t\to\infty}\frac{1}{t}\ln\n{U_\w(t)|_{F_1(\w)}}=\bar\lambda_2\]
for each $\w\in\widetilde\Omega_0$, that is, $\Phi$ admits a generalized exponential separation of type {\rm I}.
\end{itemize}
\end{teor}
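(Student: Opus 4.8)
The plan is to obtain the theorem as a corollary of Theorem~\ref{thm:exponential-separation}, exploiting the fact that (A3-O) is precisely condition (A3) with its first alternative, $U_\w(1)\,u = 0$, deleted. So first I would note that (A3-O) implies (A3): for $\w \in \Omega$ and nonzero $u \in X^+$, (A3-O) asserts both $U_\w(1)\,u \neq 0$ and the inequality~\eqref{focusing-old}, which is exactly the content of the second bullet of (A3). Consequently, under (A1) and (A3-O) together with the standing assumption $\bar\lambda_1 > -\infty$, Theorem~\ref{thm:exponential-separation} applies verbatim, furnishing an invariant set $\widetilde\Omega_0$ of full measure, the invariant decomposition $E_1(\w) \oplus F_1(\w) = X$, and conclusions~(1)--(4) of that theorem on $\widetilde\Omega_0$. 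It then remains to upgrade those four conclusions to the ones claimed here.

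Conclusions~(1) and~(4) require no work, being literally conclusions~(1) and~(4) of Theorem~\ref{thm:exponential-separation}. For conclusion~(2), I would use that under (A3-O) the inequality~\eqref{focusing-old} forces $U_\w(1)\,u \neq 0$ for \emph{every} nonzero $u \in X^+$ (since $\beta(\w,u) > 0$ and $\be \neq 0$), so that the exceptional set collapses,
\[
\{u \in X^+ \mid U_\w(1)\,u = 0\} = \{0\}\,,
\]
and substituting this into Theorem~\ref{thm:exponential-separation}(2) gives $F_1(\w) \cap X^+ = \{0\}$, as required. For conclusion~(3), the point is that the side condition ``$U_\w(1)\,u \neq 0$'' appearing in Theorem~\ref{thm:exponential-separation}(3) is automatic once $u \in X \setminus F_1(\w)$: indeed relation~\eqref{uw*(w)}, which is linear in $u$ and hence valid for all $u \in X$, reads
\[
\langle u, w^*(\w)\rangle = \Bigl\langle U_\w(1)\,u,\ \frac{w^*(\theta_1\w)}{\n{w^*_\w(1)}} \Bigr\rangle\,,
\]
so $U_\w(1)\,u = 0$ would force $u \in F_1(\w)$; hence Theorem~\ref{thm:exponential-separation}(3) already yields the stated limits for all $u \in X \setminus F_1(\w)$. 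Finally, since by Remark~\ref{rm:type-I} a generalized exponential separation of type~I differs from one of type~II only in requiring $\widetilde F(\w) \cap X^+ = \{0\}$ in place of condition~(i), the strengthened conclusion~(2) shows that $\{E_1(\cdot), F_1(\cdot), \widetilde\sigma\}$ generates a generalized exponential separation of type~I.

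I do not expect a genuine obstacle here: the whole content of the statement is already packed into Theorem~\ref{thm:exponential-separation}, and the extra demand in (A3-O), namely injectivity of $U_\w(1)$ on the cone, merely collapses the exceptional set $\{u \in X^+ : U_\w(1)\,u = 0\}$ to $\{0\}$. The only step that is not a one-line observation is the automatic non-vanishing of $U_\w(1)\,u$ for $u \notin F_1(\w)$ used in conclusion~(3), and that follows at once from~\eqref{uw*(w)}. A minor bookkeeping point is to check that the nested full-measure invariant sets $\widetilde\Omega_0 \subset \widetilde\Omega_2 \subset \widetilde\Omega_1 \cap \widetilde\Omega_1^*$ produced in the proof of Theorem~\ref{thm:exponential-separation} serve simultaneously for all four conclusions, so that a single $\widetilde\Omega_0$ does the job.
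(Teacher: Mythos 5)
Your proposal is correct and takes exactly the route the paper intends: the paper states Theorem~\ref{thm:exponential-separation-I} with no displayed proof, prefacing it only with the remark that, under (A1), (A3-O) and $\bar\lambda_1 > -\infty$, Theorem~\ref{thm:exponential-separation} ``gives the existence'' of a type~I separation. You have simply filled in the omitted bookkeeping — (A3-O) $\Rightarrow$ (A3); the exceptional set $\{u \in X^+ : U_\w(1)\,u = 0\}$ collapses to $\{0\}$, upgrading~(2); and the side condition ``$U_\w(1)\,u \neq 0$'' in~(3) is redundant for $u \notin F_1(\w)$ because the linear identity~\eqref{uw*(w)} shows $U_\w(1)\,u = 0 \Rightarrow \langle u, w^*(\w)\rangle = 0 \Rightarrow u \in F_1(\w)$ — all of which is correct.
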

\begin{nota}
Theorem~\ref{thm:exponential-separation-I} is new even in the case of generalized exponential separation of type I.  Indeed, under an additional assumption that $\bar\lambda_1 > -\infty$ it is stronger than \cite[Thm.~2.4]{MiShPart3}:  the latter requires that $\ln{\kappa}, \ln{\kappa^{*}} \in L_1\OFP$ and $\langle \mathbf{e}, \mathbf{e}^{*} \rangle > 0$.
\end{nota}
\section{Scalar linear random delay differential equations}\label{sec-delay}
This section is devoted to show the applications of the previous theory to random dynamical systems generated by scalar linear random delay differential equations of the form
\begin{equation}\label{5.linear}
z'(t)=a(\theta_t\w)\,z(t)+b(\theta_t\w)\,z(t-1)\,,\quad\w\in\Omega\,.
\end{equation}
\par\smallskip
Let $1<p<\infty$. We consider the separable Banach space $X=\R\times L_p([-1,0],\R)$ with the norm
\[\n{u}_X=|u_1|+\n{u_2}_p=|u_1|+\left(\int_{-1}^0 |u_2(s)|^p\,ds\right)^{\!\!1/p}\]
for any $u=(u_1,u_2)$ with $u_1\in\R$ and $u_2\in L_p([-1,0],\R)$. The positive cone \[X^+=\left\{u=(u_1,u_2)\in X\mid u_1 \geq 0 \text{ and } u_2(s)\geq 0 \text{ for Lebesgue-a.e.~} s\in[0,1]\right\}\] is normal and reproducing, and the dual  $X^*=\R\times L_q([-1,0],\R)$ with $1/q+1/p=1$ is also separable.\par
\smallskip
Now we introduce the assumptions on the coefficients of the family~\eqref{5.linear}:
\begin{itemize}
\item[(\textbf{S1})]
the $(\mathfrak{F}, \mathfrak{B}(\R))$-measurable functions $a$ and $b$ have the properties:
\begin{align*}
& \bigl[ \, \Omega \ni \omega \mapsto a(\w) \in \R \, \bigr] \in L_1\OFP, \text{ and}
\\
& \Bigl[ \, \Omega \ni \omega \mapsto \lnplus{\int_{0}^{1}\lvert b(\theta_{r}\w) \rvert^{q} \, dr} \in \R \, \Bigr] \in L_1\OFP.
\end{align*}
\item[(\textbf{S2})] $b(\w)\geq 0$ for each $\w\in\Omega$.
\end{itemize}
\begin{nota}
\label{rm:S1}
The following is sufficient for the fulfillment of the second condition in (S1):
\begin{equation*}
\label{eq:S1}
\bigl[ \, \Omega \ni \omega \mapsto b(\w) \in \R \, \bigr] \in L_q \OFP.
\end{equation*}
Indeed, since $\lvert b \rvert^{q} \in L_1\OFP$ and the measure $\PP$ is invariant, for any $t\in\R$
\begin{equation*}
\int_\Omega \lvert b(\theta_t\omega')\rvert^{q} \,d\PP(\omega') = \int_\Omega \lvert b(\omega')\rvert^{q} \,d\PP(\omega') \,
\end{equation*}
and an application of Fubini's theorem gives that the map
\begin{equation*}
\Bigl[ \, \Omega \ni \omega \mapsto \int_{0}^{1}\lvert b(\theta_{r}\w) \rvert^{q} \, dr \in \R \, \Bigr]
\end{equation*}
belongs to $L_1\OFP$, from which the required statement follows immediately.
\end{nota}
\par
\smallskip
In order to define the \mlsps\  we are going to deal with, for each $u=(u_1,u_2)\in X$ and $\w\in\Omega$ we consider the initial value problem
\begin{equation}\label{5.initial}
\left\{\begin{array}{l}
z'(t)=a(\theta_t\w)\,z(t)+b(\theta_t\w)\,z(t-1)\\[.1cm]
z(t)=u_2(t)\,,\quad t\in[-1,0)\,,\\[.1cm]
z(0)=u_1\,.
\end{array}\right.
\end{equation}
Its solution will be denoted by $z(t,\w,u)$.\par
\smallskip
Since $a\in L_1\OFP$ and the measure $\PP$ is invariant, for any $t\in\R$
\begin{equation*}
\int_\Omega a(\theta_t{\omega'})\,d\PP(\omega') = \int_\Omega a(\omega')\,d\PP(\omega') \,
\end{equation*}
and an application of Fubini's theorem gives that the map
\begin{equation}\label{5.aL1loc}
\bigl[\, \R\ni t\mapsto a(\theta_t\w)\in \R\,\bigr]\in L_{1,\text{loc}}(\R)
\end{equation}
for $\w\in\Omega_0\subset \Omega$,  invariant set of full measure. Then we can put the value of $a(\w)$  for $\w\in\Omega\setminus \Omega_0$ to be equal to zero to obtain~\eqref{5.aL1loc} for all $\w\in\Omega$.  Analogously, by changing the value of $b$ to zero in a set of null measure, the map
\begin{equation}\label{5.bLqloc}
\bigl[ \R\ni t\mapsto b(\theta_t\w)\in \R\bigr]\in L_{q,\text{loc}}(\R)\subset L_{1,\text{loc}}(\R) \,,
\end{equation}
for all $\w\in\Omega$.
Therefore, for a fixed $\w\in\Omega$ and $0\leq t\leq 1$ the system~\eqref{5.initial} of Carath\'{e}odory type has a unique solution, as shown by Coddington and Levinson~\cite[Theorem~1.1]{book:CL}, which can be written as
\begin{equation}
\begin{split}\label{5.0<t<1}
z(t,\w,u)& =\exp\biggl(\int_0^t \!\!a(\theta_r\w)\,dr\biggr)\left[u_1+\int_0^t \exp\biggl(-\int_0^s \!\!a(\theta_r\w)\,dr\biggr)\,b(\theta_s\w)\,u_2(s-1)\,ds\right]\\
 & = \exp\biggl(\int_0^t \!\! a(\theta_r\w)\,dr\biggr)\,u_1+ \int_0^t \exp\biggl(\int_s^t a(\theta_r\w)\,dr\biggr)\,b(\theta_s\w)\,u_2(s-1)\,ds\,,
\end{split}
\end{equation}
and, for $1\leq t\leq 2$ as
\begin{multline}\label{5.1<t<2}
z(t,\w,u)=\exp\biggl(\int_1^t a(\theta_r\w)\,dr\biggr)\biggl[z(1,\w,u)\\
 \quad +\int_1^t\exp\biggl(-\int_1^s a(\theta_r\w)\,dr\biggr)\,b(\theta_s\w)\,z(s-1,\w,u)\,ds\biggr].
\end{multline}
In a recursive way we obtain the formula for $z(t,\w,u)$ for any $t\in[-1,\infty)$.
\begin{nota}\label{5.positivefrom}
Assume that $u\in X^+$  and there is a $t_1\geq 0$ such that $z(t_1,\w,0)>0$. Then $z(t,\w,u)\geq \exp\bigl(\int_{t_1}^t a(\theta_r\w)\,dr\bigr)\,z(t_1,\w,u)>0$ for each $t\geq t_1$.
\end{nota}
\par\smallskip
Next we denote
\begin{equation}\label{5.defice}
c(\w) = \exp\biggr(\int_{0}^{1} \! \! \lvert a(\theta_r\w) \rvert \,dr\biggr) \; \text{ and }\; d(\w)=\biggl(\int_{-1}^0 \!b^q(\theta_{s+1}\w)\,ds\biggr)^{\!\!1/q}.
\end{equation}
\begin{lema}\label{5.desi[0,1]}
Under assumptions {\rm(S1)} and {\rm (S2)}, for each $\omega \in \Omega$, $0 \leq t \leq 1$ and $u \in X$ there holds
\begin{equation*}
\lvert z(t,\w,u) \rvert \leq c(\w)\,(1+d(\w))\,\n{u}_X\,.
\end{equation*}
\end{lema}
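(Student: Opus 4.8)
The plan is to argue directly from the explicit representation~\eqref{5.0<t<1} of the solution on $[0,1]$, estimating its two summands separately. First I would bound the exponential factors: for $0 \le s \le t \le 1$,
\[
\Bigl\lvert \exp\bigl(\textstyle\int_s^t a(\theta_r\w)\,dr\bigr) \Bigr\rvert \le \exp\bigl(\textstyle\int_s^t \lvert a(\theta_r\w)\rvert\,dr\bigr) \le \exp\bigl(\textstyle\int_0^1 \lvert a(\theta_r\w)\rvert\,dr\bigr) = c(\w),
\]
where the middle inequality uses $0 \le s \le t \le 1$ and the nonnegativity of $\lvert a\rvert$, and the last equality is the definition~\eqref{5.defice} of $c(\w)$; in particular (taking $s = 0$) the coefficient of $u_1$ in~\eqref{5.0<t<1} has modulus at most $c(\w)$.

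For the integral term I would use hypothesis~{\rm (S2)}, so that $b \ge 0$, together with the exponential bound above and the nonnegativity of the integrand, to get
\[
\Bigl\lvert \int_0^t \exp\bigl(\textstyle\int_s^t a(\theta_r\w)\,dr\bigr)\,b(\theta_s\w)\,u_2(s-1)\,ds \Bigr\rvert \le c(\w)\int_0^1 b(\theta_s\w)\,\lvert u_2(s-1)\rvert\,ds.
\]
H\"older's inequality with the conjugate exponents $q$ and $p$ then bounds the last integral by $\bigl(\int_0^1 b^q(\theta_s\w)\,ds\bigr)^{1/q}\bigl(\int_0^1 \lvert u_2(s-1)\rvert^p\,ds\bigr)^{1/p}$, and the substitution $s \mapsto s+1$ identifies the first factor with $\bigl(\int_{-1}^0 b^q(\theta_{s+1}\w)\,ds\bigr)^{1/q} = d(\w)$ (by~\eqref{5.defice}) and the second with $\bigl(\int_{-1}^0 \lvert u_2(s)\rvert^p\,ds\bigr)^{1/p} = \n{u_2}_p$.

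Combining the two estimates gives $\lvert z(t,\w,u)\rvert \le c(\w)\lvert u_1\rvert + c(\w)\,d(\w)\,\n{u_2}_p$, and since $\lvert u_1\rvert \le \n{u}_X$ and $\n{u_2}_p \le \n{u}_X$ by definition of the norm on $X$, the claimed bound $\lvert z(t,\w,u)\rvert \le c(\w)\,(1+d(\w))\,\n{u}_X$ follows at once. I do not expect any genuine obstacle here; the only points requiring mild care are that the exponential estimate be taken uniformly over $s,t\in[0,1]$ (hence the appearance of $\int_0^1$ rather than $\int_0^t$) and that the two changes of variable correctly reproduce the quantities $d(\w)$ and $\n{u_2}_p$.
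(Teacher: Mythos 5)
Your proof is correct and follows essentially the same route as the paper: both start from the explicit representation~\eqref{5.0<t<1}, bound the exponential factors by $c(\w)$, use $b \ge 0$ to extend the integral up to $1$, and then apply H\"older's inequality (the paper changes variables before applying H\"older, you do it after, but that is just a cosmetic reordering). Nothing to add.
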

\begin{proof}  From~\eqref{5.0<t<1},~\eqref{5.bLqloc}, $b\geq 0$, $u_2\in L^p([-1,0],\R)$ and H\"{o}lder inequality, we deduce that if $0\leq t\leq 1$ and $u=(u_1,u_2)\in\R\times L_p([-1,0],\R)$
\begin{align*}
|z(t,\w,u)|&\leq c(\w)\biggl[\,|u_1|+\int_0^t b(\theta_s)\,|u_2(s-1)|\,ds\,\biggr]\\
           &= c(\w)\biggl[\,|u_1|+\int_{-1}^{t-1} b(\theta_{s+1})\,|u_2(s)|\,ds\,\biggr]\\
           &\leq c(\w)\biggl[\,|u_1|+\int_{-1}^0 b(\theta_{s+1})\,|u_2(s)|\,ds\,\biggr] \\
           & \leq c(\w)\biggl[\,|u_1|+ \biggl(\int_{-1}^0 b^q(\theta_{s+1}\w)\,ds\biggr)^{\!\!1/q}\n{u_2}_p\,\biggr]\\
           &\leq c(\w)\,(1+d(\w))\,\n{u}_X\,,
\end{align*}
as stated.
\end{proof}
Moreover, it can be checked that for each $t$ and $r\geq 0$
\begin{equation}\label{5.cocycle}
z(t+r,\w,u)=z(t,\theta_r\w,(z(r,\w,u),z_r(\w,u)))\,,
\end{equation}
where $z_t(\w,u)\colon [-1,0]\to \R$, $s\mapsto z(t+s,\w,u)$, which together with Lemma~\ref{5.desi[0,1]} show that $z_t(\w,u)\in L_p([-1,0],\R)$ for each $t\geq 0$
and we can define the linear~operator
\begin{equation}\label{5.defiskpd}
\begin{array}{lccc}
 U_\w(t)\colon & X &\longrightarrow & X \\[.1cm]
                        & u & \mapsto & (z(t,\w,u),z_t(\w,u))
\end{array}
\end{equation}
\begin{prop}\label{5.Uwbounded} Under assumptions {\rm(S1)} and {\rm (S2)}, $U_\w(t)$ satisfies~\eqref{eq-identity},~\eqref{eq-cocycle} and $U_\w(t)\in \mathcal{L}(X) $ for each $t\geq 0 $ and $\w\in\Omega$.
\end{prop}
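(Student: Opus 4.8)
The plan is to verify the three asserted properties in turn; the first two are formal consequences of the recursive construction of $z(t,\w,u)$ together with the uniqueness of solutions of the Carath\'{e}odory system~\eqref{5.initial}, while the third combines linearity of that system with the a~priori estimate of Lemma~\ref{5.desi[0,1]} and an iteration over unit time steps. First, the initial data in~\eqref{5.initial} give $z(0,\w,u)=u_1$ and $z_0(\w,u)(s)=z(s,\w,u)=u_2(s)$ for a.e.\ $s\in[-1,0]$, so $U_\w(0)u=(u_1,u_2)=u$, which is~\eqref{eq-identity}. For the cocycle property~\eqref{eq-cocycle}, fix $\w\in\Omega$, $s,t\ge0$ and $u\in X$, and set $v:=U_\w(s)u=(z(s,\w,u),z_s(\w,u))$. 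By~\eqref{5.defiskpd}, $U_{\theta_s\w}(t)\,v=(z(t,\theta_s\w,v),z_t(\theta_s\w,v))$; relation~\eqref{5.cocycle} with $r=s$ gives $z(t,\theta_s\w,v)=z(t+s,\w,u)$, and a short case distinction on the sign of $t+\tau$ (using~\eqref{5.cocycle} when $t+\tau\ge0$ and the definition of the history segment of $v$ when $t+\tau<0$) yields $z_t(\theta_s\w,v)(\tau)=z(t+s+\tau,\w,u)=z_{t+s}(\w,u)(\tau)$ for a.e.\ $\tau\in[-1,0]$; hence $U_{\theta_s\w}(t)\circ U_\w(s)=U_\w(t+s)$. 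Here the uniqueness part of~\cite[Theorem~1.1]{book:CL}, applied successively on $[0,1]$, $[1,2],\dots$, is what makes~\eqref{5.cocycle} meaningful.

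Linearity of $u\mapsto U_\w(t)u$ is then immediate: the right-hand side of the equation in~\eqref{5.initial} is linear in $z$ and the data map $u\mapsto(u_1,u_2)$ is linear, so if $z^{(i)}=z(\cdot,\w,u^{(i)})$ for $i=1,2$, then $\alpha z^{(1)}+\beta z^{(2)}$ solves~\eqref{5.initial} with data $\alpha u^{(1)}+\beta u^{(2)}$, whence by uniqueness $z(\cdot,\w,\alpha u^{(1)}+\beta u^{(2)})=\alpha z^{(1)}+\beta z^{(2)}$ (alternatively this can be read off directly from the variation-of-constants representations~\eqref{5.0<t<1}--\eqref{5.1<t<2} and their recursive continuations). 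Consequently $u\mapsto U_\w(t)u=(z(t,\w,u),z_t(\w,u))$ is linear.

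Finally, boundedness. For $0\le t\le1$, Lemma~\ref{5.desi[0,1]} bounds the first coordinate by $c(\w)(1+d(\w))\,\n{u}_X$; for the segment, $z_t(\w,u)(\sigma)=z(t+\sigma,\w,u)$ equals $u_2(t+\sigma)$ on $\{\sigma\in[-1,0]:t+\sigma<0\}$, whose $L_p$-norm over that set is at most $\n{u_2}_p\le\n{u}_X$ after the substitution $\sigma\mapsto t+\sigma$, while on $\{\sigma:t+\sigma\ge0\}$ it is again at most $c(\w)(1+d(\w))\,\n{u}_X$ by Lemma~\ref{5.desi[0,1]}; integrating gives $\n{z_t(\w,u)}_p\le\bigl(1+(c(\w)(1+d(\w)))^{p}\bigr)^{1/p}\n{u}_X$. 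Hence $\n{U_\w(t)u}_X\le M(\w)\,\n{u}_X$ for all $t\in[0,1]$, where $M(\w)$ depends only on $c(\w)$ and $d(\w)$, which are finite for every $\w$ by~\eqref{5.aL1loc}--\eqref{5.bLqloc}. For $t>1$, with $n:=\lfloor t\rfloor$ and $r:=t-n\in[0,1)$, iterating the cocycle property already proved gives
\[
U_\w(t)=U_{\theta_{n}\w}(r)\circ U_{\theta_{n-1}\w}(1)\circ\cdots\circ U_{\theta_{1}\w}(1)\circ U_{\theta_{0}\w}(1),
\]
so that $\n{U_\w(t)}\le\prod_{k=0}^{n}M(\theta_{k}\w)<\infty$; this estimate holds trivially for $t\in[0,1]$ as well. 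Therefore $U_\w(t)\in\mathcal{L}(X)$ for every $t\ge0$ and $\w\in\Omega$. The only mildly delicate step in the whole argument is the change of variables in the segment estimate (checking that it produces no extra mass); everything else is either formal or a direct appeal to the cited existence--uniqueness theorem for Carath\'{e}odory systems.
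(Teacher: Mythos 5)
Your proof is correct and follows essentially the same route as the paper's: identity and the cocycle property are read off from the construction of $z$ and relation~\eqref{5.cocycle}, and boundedness is reduced by the cocycle property to $t\in[0,1]$, where the segment integral is split over $\{t+s<0\}$ (giving $\n{u_2}_p$) and $\{t+s\ge0\}$ (giving the pointwise bound from Lemma~\ref{5.desi[0,1]}). The only cosmetic differences are that you spell out the cocycle verification and the linearity step (which the paper leaves implicit), and you combine the two pieces of the segment estimate by adding $p$th powers rather than by subadditivity of $(\cdot)^{1/p}$, yielding a slightly different but equally valid constant.
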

\begin{proof} Relation~\eqref{eq-identity} is immediate and~\eqref{eq-cocycle} follows from~\eqref{5.cocycle}. Once that this cocycle property is shown, to prove that $U_\w(t)\in \mathcal{L}(X)$ for $t\geq 0$, it is enough to check that $U_\w(t)$ is a bounded operator for $t\in[0,1]$ and $\w\in\Omega$, which is a consequence of Lemma~\ref{5.desi[0,1]} because
\begin{align*}
\n{U_\w(t)\,u}_X&=|z(t,\w,u)|+\biggl(\int_{-1}^0 |z(t+s,\w,u)|^p\,ds\biggr)^{\!\!1/p}\leq c(\w)\,(1+d(\w))\,\n{u}_X\\
&\quad +\biggl(\int_{-1}^{-t} |u_2(t+s)|^p\,ds\biggr)^{\!\!1/p}+\biggl(\int_{-t}^0 |z(t+s,\w,u)|^p\,ds\biggr)^{\!\!1/p}\\
&\leq 3\,c(\w)\,(1+d(\w))\,\n{u}_X\,,
\end{align*}
that is, $\n{U_\w(t)}\leq 3\,c(\w)\,(1+d(\w))$ for $t\in[0,1]$, which finishes the proof.
\end{proof}
In order to show that $\Phi = \allowbreak ((U_\w(t))_{\w \in \Omega, t \in \R^{+}}, \allowbreak (\theta_t)_{t\in\R})$ is a \mlsps\, we start with the following auxiliary lemma.
\begin{lema}\label{5.measurable}
Under  {\rm(S1)} and {\rm (S2)}, for each $u\in X$ and $t>0$ the mapping
\[ \bigl[\,\Omega\ni\w\mapsto U_\w(t)\,u\in X\,\bigr] \text{ is } (\mathfrak{F},
\mathfrak{B}(X))\text{-measurable}\,.\]
\begin{proof} It follows from~\eqref{eq-cocycle} that it suffices to prove the result for $t\in(0,1]$ only.  Since $X$ is separable, from Pettis' Theorem (see~Hille and Phillips~\cite[Theorem 3.5.3 and Corollary 2 on pp. 72--73]{HiPhi}) the weak and strong measurability notions are equivalent and therefore,  it is enough to check that for each $u^*\in X^*$ the mapping
\begin{equation}\label{5.u*measur}
\bigl[\,\Omega\ni\w\mapsto u^*(U_\w(t)\,u)\in \R\,\bigr] \text{ is } (\mathfrak{F},
\mathfrak{B}(\R))\text{-measurable}\,.
\end{equation}
Fixing $u^*=(u_1^*,u_2^*)\in\R\times L_q([-1,0],\R)$, $u=(u_1,u_2)\in \R\times L_p([-1,0],\R)$ and $t\in(0,1]$,  we have
\begin{equation}\label{5.u*(U)}
u^*(U_w(t)\,u)= z(t,\w,u)\,u_1^*+ \int_{-1}^0 z(t+s,w,u)\,u_2^*(s)\,ds\,.
\end{equation}
The measurability of the map
\[
\begin{array}{ccccc} \Omega\times[0,t]& \to &\Omega &\to&\R\\
                        (\w,r) & \mapsto & \theta_r\w &\mapsto & a(\theta_r\w)
\end{array}
\]
 and an application of Fubini's theorem show that $\bigl[\,\Omega\ni\w \mapsto \int_0^t a(\theta_r\w)\,dr\in \R\,\bigr]$ is $(\mathfrak{F},\mathfrak{B}(\R))$\nbd-measurable. Therefore,
 \begin{equation}\label{5.zmeasu}
 \bigl[\,\Omega\ni\w \mapsto \exp\bigl({\textstyle\int_0^t a(\theta_r\w)\,dr}\big)\,u_1\in\R \,\bigr]  \text{ is } (\mathfrak{F},\mathfrak{B}(\R))\text{-measurable}\,.
 \end{equation}
 Since $\int_s^t a(\theta_r\w)\,dr=\int_0^t\chi_{[0,t]}(t-s)\,a(\theta_{r+s}\w)\,dr$, again  the measurability of the mapping
\[
\begin{array}{ccc} \Omega\times[0,t]\times [0,t]& \to &\R\\
                        (\w,r,s) & \mapsto & \chi_{[0,t]}(t-s)\,a(\theta_{r+s}\w)
\end{array}
\]
and Fubini's theorem prove that the maps $\bigl[\,\Omega\times[0,t]\ni (\w,s)\mapsto \int_s^t a(\theta_r\w)\,dr\in \R\,\bigr]$  and  $\bigl[\,\Omega\times[0,t]\ni (\w,s)\mapsto \exp\bigl(\int_s^t a(\theta_r\w)\,dr\bigr)\,b(\theta_s\w)\,u_2(s-1)\in \R\,\bigr]$ are measurable. From this, as before, we deduce that
\begin{equation*}
 \bigl[\,\Omega\ni\w \mapsto \textstyle{\int_0^t \exp\bigl({\int_s^t \!\!a(\theta_r\w)\,dr\bigr)}\,b(\theta_s\w)\,u_2(s-1)\,ds\in\R}\, \bigr]  \text{ is } (\mathfrak{F},\mathfrak{B}(\R))\text{-measurable}\,,
 \end{equation*}
 which together with~\eqref{5.zmeasu} and formula~\eqref{5.0<t<1}
prove that $\bigl[\,\Omega\ni\w \mapsto z(t,\w,u)\in \R\,\bigr]$ is $(\mathfrak{F},
\mathfrak{B}(\R))$\nbd-measurable.
Finally from this fact, the formula
\begin{align*}
\int_{-1}^0 z(t+s,\w,u)\,u_2^*(s)\,ds& =\int_{-1}^{-t} u_2(t+s)\,ds+\int_{-t}^0 z(t+s,\w,u)\,u_2^*(s)\,ds\\
&= \int_{-1}^{-t} u_2(t+s)\,ds+\int_{0}^t z(s,\w,u)\,u_2^*(s-1)\,ds\,,
\end{align*}
a similar argument, and~\eqref{5.u*(U)} we conclude that~\eqref{5.u*measur} holds, as claimed.
\end{proof}
\end{lema}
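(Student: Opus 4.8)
The plan is to reduce the statement to the range $t\in(0,1]$ by means of the cocycle property~\eqref{eq-cocycle}, and then to verify measurability by testing against elements of $X^*$. For the reduction, assume the result is known for every $u\in X$ and every $s\in(0,1]$. Since each operator $U_\w(s)$ is bounded (Proposition~\ref{5.Uwbounded}), the maps $v\mapsto U_\w(s)\,v$ are continuous, so a standard Carath\'{e}odory-type argument (approximate a given measurable $X$-valued function by functions taking at most countably many values, using separability of $X$) gives that $(\w,v)\mapsto U_\w(s)\,v$ is $(\mathfrak{F}\otimes\mathfrak{B}(X),\mathfrak{B}(X))$-measurable for each $s\in(0,1]$. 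Writing a general $t>1$ as $t=k+r$ with $k\in\N$ and $r\in(0,1]$ and iterating~\eqref{eq-cocycle}, we obtain $U_\w(t)\,u=U_{\theta_k\w}(r)\bigl(U_{\theta_{k-1}\w}(1)\cdots U_\w(1)\,u\bigr)$, which is measurable in $\w$ as a composition of measurable maps. So it remains to treat a fixed $t\in(0,1]$ and a fixed $u=(u_1,u_2)\in X$.

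For this case I would invoke Pettis' measurability theorem: since $X$ is separable, it suffices to show that $\w\mapsto\langle U_\w(t)\,u,u^*\rangle$ is $(\mathfrak{F},\mathfrak{B}(\R))$-measurable for every $u^*=(u_1^*,u_2^*)\in\R\times L_q([-1,0],\R)$. Unwinding~\eqref{5.defiskpd}, this pairing equals $z(t,\w,u)\,u_1^*+\int_{-1}^0 z(t+s,\w,u)\,u_2^*(s)\,ds$, so one needs (a) that $\w\mapsto z(t,\w,u)$ is measurable, and (b) that the integral term is measurable in $\w$. For (a) I would use the closed form~\eqref{5.0<t<1}. The map $(\w,r)\mapsto a(\theta_r\w)$ is jointly measurable, being a composition of the measurable flow $\theta$ with the measurable function $a$; Fubini's theorem then makes $\w\mapsto\int_0^t a(\theta_r\w)\,dr$ measurable, and hence so is $\w\mapsto\exp\bigl(\int_0^t a(\theta_r\w)\,dr\bigr)u_1$. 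Rewriting $\int_s^t a(\theta_r\w)\,dr=\int_0^t\chi_{[0,t]}(t-s)\,a(\theta_{r+s}\w)\,dr$ and using Fubini again, $(\w,s)\mapsto\int_s^t a(\theta_r\w)\,dr$ is jointly measurable, hence so is $(\w,s)\mapsto\exp\bigl(\int_s^t a(\theta_r\w)\,dr\bigr)b(\theta_s\w)\,u_2(s-1)$, which is moreover integrable in $s$ over $[0,t]$ by H\"{o}lder's inequality since $b\in L_{q,\text{loc}}(\R)$ by~\eqref{5.bLqloc} and $u_2\in L_p$; a final use of Fubini yields (a).

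For (b) I would split $\int_{-1}^0 z(t+s,\w,u)\,u_2^*(s)\,ds$ at $s=-t$. On $[-1,-t]$ one has $t+s\le 0$, so $z(t+s,\w,u)$ is the prescribed history $u_2(t+s)$, independent of $\w$, and this piece contributes an $\w$-independent constant. On $[-t,0]$ one has $t+s\in[0,t]$, and after an affine change of the integration variable the integrand involves $z(\sigma,\w,u)$ for $\sigma\in[0,t]$ tested against a fixed $L_q$-function; joint measurability in $(\w,\sigma)$ of this integrand follows from the same Fubini-type reasoning as in (a), now with the upper limit $t$ replaced by the running variable $\sigma$, and one more application of Fubini makes the integral measurable in $\w$. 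Combining (a) and (b), $\w\mapsto\langle U_\w(t)\,u,u^*\rangle$ is measurable for every $u^*$, and Pettis' theorem finishes the proof.

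The only genuine difficulty is bookkeeping: one has to exhibit each integrand as a bona fide jointly measurable function of $\w$ and the integration variable(s) before invoking Fubini, keep track of the several changes of variable (in particular the shift by $t$ and the two parametrizations of the history segment), and check the integrability that legitimizes each application of Fubini. None of these steps is deep, but this is exactly the place where an argument of this shape tends to go wrong.
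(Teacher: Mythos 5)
Your proof follows the same route as the paper's: reduce to $t\in(0,1]$ via the cocycle property, invoke Pettis' measurability theorem, use the explicit Carath\'eodory solution formula~\eqref{5.0<t<1} together with Fubini's theorem to get measurability of $\w\mapsto z(t,\w,u)$, and split the pairing integral at $s=-t$. The only difference is that you spell out the cocycle reduction more carefully (establishing joint measurability of $(\w,v)\mapsto U_\w(s)v$ from separability and continuity so that compositions are legitimate), which is a detail the paper leaves implicit; everything else matches.
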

In view of the above, we call $\Phi = \allowbreak ((U_\w(t))_{\w \in \Omega, t \in \R^{+}}, \allowbreak (\theta_t)_{t\in\R})$ as defined by~\eqref{5.defiskpd} the \mlsps\ {\em generated by~\eqref{5.linear}}.
\par
\smallskip
We finish this section showing that the skew-product semidynamical system, generated by the family of scalar linear random delay differential equations of the form~\eqref{5.linear}, satisfies all the requirements for the existence of a generalized exponential separation.  Notice that according to Remark~\ref{timeT} we can take time $T = 2$ instead of 1 to check conditions (A1) and (A3).
\par
\smallskip
Before proceeding we formulate and prove the following auxiliary
\begin{lema}
\label{lm:auxiliary2}
Let $x_1, \dots, x_n > 0$.  Then
\begin{equation*}
\lnplus{\Bigl(\sum_{i=1}^{n} x_i\Bigr)} \le \sum_{i=1}^{n} \lnplus{x_i} + \ln{n}.
\end{equation*}
\end{lema}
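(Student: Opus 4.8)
The plan is to bound the sum by $n$ times its largest term and then exploit that $\lnplus$ is nondecreasing and that $\ln n \ge 0$. First I would set $M := \max_{1 \le i \le n} x_i > 0$, so that $\sum_{i=1}^{n} x_i \le nM$; since $\lnplus$ is nondecreasing on $(0,\infty)$, this gives $\lnplus{\bigl(\sum_{i=1}^{n} x_i\bigr)} \le \lnplus{(nM)}$.

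Next, because $n \ge 1$ we have $\ln n \ge 0$, and I would invoke the elementary inequality $\max\{a+b,\,0\} \le a + \max\{b,\,0\}$, valid for every $a \ge 0$ (immediate from $a+b \le a+\max\{b,0\}$ together with $0 \le a \le a+\max\{b,0\}$), with $a = \ln n$ and $b = \ln M$. This yields $\lnplus{(nM)} = \max\{\ln n + \ln M,\,0\} \le \ln n + \max\{\ln M,\,0\} = \ln n + \lnplus{M}$.

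Finally, using once more that $\lnplus$ is nondecreasing, $\lnplus{M} = \max_{1 \le i \le n} \lnplus{x_i} \le \sum_{i=1}^{n} \lnplus{x_i}$, the last step holding because each summand $\lnplus{x_i}$ is nonnegative. Chaining the three estimates gives $\lnplus{\bigl(\sum_{i=1}^{n} x_i\bigr)} \le \sum_{i=1}^{n} \lnplus{x_i} + \ln n$, as claimed. There is no real obstacle; the only point needing a moment of care is the split-into-cases verification of $\max\{a+b,0\} \le a + \max\{b,0\}$ for $a \ge 0$, which is entirely routine. (Alternatively one could argue by induction on $n$ using $\lnplus{(x+y)} \le \lnplus{x} + \lnplus{y} + \ln 2$, but the $\max$-based argument above is shorter and gives the sharp constant $\ln n$ directly.)
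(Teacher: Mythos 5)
Your proof is correct, but it follows a genuinely different route from the one in the paper. The paper's argument applies Jensen's inequality to the convex function $f(x)=x\ln x$, obtaining $\bigl(\sum_i x_i\bigr)\ln\bigl(\frac{1}{n}\sum_i x_i\bigr)\le\sum_i x_i\ln x_i$, then divides by $\sum_j x_j$ and bounds each weighted term $\frac{x_i}{\sum_j x_j}\ln x_i$ by $\lnplus{x_i}$, finishing by observing the right-hand side is nonnegative. Your argument instead bounds $\sum_i x_i$ by $n\max_i x_i$, applies the subadditivity-type inequality $\max\{a+b,0\}\le a+\max\{b,0\}$ for $a\ge 0$ with $a=\ln n$, and then uses nonnegativity of each $\lnplus{x_i}$ to pass from the maximum to the sum. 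Both proofs are complete and give the same constant $\ln n$. What yours buys is elementarity: it uses no convexity and no Jensen inequality, only monotonicity of $\lnplus$ and a one-line case check, so it is shorter and more self-contained than the paper's. The paper's Jensen approach is arguably the more "structural" one, identifying the weighted average $\sum_i\frac{x_i}{\sum_j x_j}\ln x_i$ as the natural intermediary, but here that extra machinery pays no dividend in the final bound. Either proof is perfectly acceptable.
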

\begin{proof}
Applying the Jensen inequality to the convex function $f(x) = x \ln{x}$ we obtain
\begin{equation*}
\Bigl( \sum_{i=1}^{n} x_i  \Bigr) \, \ln{\Bigl( \frac{1}{n}} \sum_{i=1}^{n} x_i \Bigr) \le \sum_{i=1}^{n} x_i \ln{x_i},
\end{equation*}
which gives
\begin{equation*}
\ln{\Bigl(\sum_{i=1}^{n} x_i\Bigr)} \le \sum_{i=1}^{n} \frac{x_i}{\sum_{j=1}^{n} x_j} \ln{x_i} + \ln{n}.
\end{equation*}
For $i$ such that $\ln{x_i} \le 0$ we have
\begin{equation*}
\frac{x_i}{\sum_{j=1}^{n} x_j} \ln{x_i} \le 0 = \lnplus{x_i},
\end{equation*}
whereas for $i$ such that $\ln{x_i} > 0$ we have
\begin{equation*}
\frac{x_i}{\sum_{j=1}^{n} x_j} \ln{x_i} \le \ln{x_i} = \lnplus{x_i},
\end{equation*}
consequently
\begin{equation*}
\ln{\Bigl(\sum_{i=1}^{n} x_i\Bigr)} \le \sum_{i=1}^{n} \lnplus{x_i} + \ln{n}.
\end{equation*}
As the right-hand side of the above inequality is nonnegative, we obtain the desired result.
\end{proof}
\begin{prop}
Under  {\rm(S1)} and {\rm (S2)}, $\Phi = \allowbreak ((U_\w(t))_{\w \in \Omega, t \in \R^{+}}, \allowbreak (\theta_t)_{t\in\R})$ is a \mlsps\ satisfying properties {\rm(A1)}, {\rm (A2)} and {\rm (A3)} for time $T=2$. Moreover, the generalized Lyapunov exponent satisfies $\bar\lambda_1\geq \int_\Omega a \,d\PP$.
\end{prop}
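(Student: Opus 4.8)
The plan is to establish, in order, that $\Phi$ is a \mlsps, then properties (A2), (A1) and (A3) (the latter two with $T=2$, as allowed by Remark~\ref{timeT}), and finally the lower bound on $\bar\lambda_1$. That $\Phi$ is a \mlsps\ is essentially already in place: \eqref{eq-identity}, \eqref{eq-cocycle} and $U_\w(t)\in\mathcal{L}(X)$ are Proposition~\ref{5.Uwbounded}, and the required joint $(\mathfrak{B}(\R^+)\otimes\mathfrak{F}\otimes\mathfrak{B}(X),\mathfrak{B}(X))$-measurability follows by combining the $\w$-measurability of Lemma~\ref{5.measurable} with the joint continuity of $(t,u)\mapsto U_\w(t)u$ — which holds since $u\mapsto U_\w(t)u$ is bounded linear, $t\mapsto z(t,\w,u)$ is continuous, and $t\mapsto z_t(\w,u)$ is continuous in $L_p$ by continuity of translation — through the standard argument for Carath\'eodory maps between separable metric spaces. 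Property (A2) reduces, by linearity, to showing $U_\w(t)u\in X^+$ for $u\in X^+$; for $t\in[0,1]$ this is read off~\eqref{5.0<t<1} using $u_1\ge0$, $u_2\ge0$ a.e.\ and $b\ge0$ (assumption (S2)), and for $t>1$ one iterates along~\eqref{5.cocycle}.

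For (A1) with $T=2$, recall from the proof of Proposition~\ref{5.Uwbounded} that $\n{U_\w(t)}\le 3\,c(\w)(1+d(\w))$ for $t\in[0,1]$, with $c,d$ as in~\eqref{5.defice}. Splitting $[0,2]$ at $1$ and using the cocycle relation (noting $c(\theta_s\w)\le c(\w)c(\theta_1\w)$ and $d(\theta_s\w)\le d(\w)+d(\theta_1\w)$ for $s\in[0,1]$), one bounds both $\sup_{0\le s\le2}\n{U_\w(s)}$ and $\sup_{0\le s\le2}\n{U_{\theta_s\w}(2-s)}$ by a fixed finite product of powers of $c(\w)$, $c(\theta_1\w)$, $1+d(\w)$, $1+d(\theta_1\w)$. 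Applying $\lnplus$, the inequality $\lnplus(xy)\le\lnplus x+\lnplus y$ (or Lemma~\ref{lm:auxiliary2}), and the identities $\lnplus c(\w)=\int_0^1|a(\theta_r\w)|\,dr$ and $\lnplus d(\w)=q^{-1}\lnplus\!\int_0^1 b^q(\theta_r\w)\,dr$, the required $L_1\OFP$-membership follows from (S1) and the $\PP$-invariance of $\theta$ by Fubini's theorem, exactly as in Remark~\ref{rm:S1}.

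The heart of the argument is (A3) with $T=2$. The key elementary observation is that for $u\in X^+$ the map $t\mapsto z(t,\w,u)\exp(-\int_0^t a(\theta_r\w)\,dr)$ is nondecreasing on $[0,\infty)$, since its derivative equals $\exp(-\int_0^t a)\,b(\theta_t\w)\,z(t-1,\w,u)\ge0$. From this one first deduces the dichotomy: either $z(1,\w,u)=0$, in which case $z\equiv0$ on $[0,1]$, hence $z(s-1,\w,u)=0$ on $[1,2]$, hence $z\equiv0$ on $[1,2]$ and $U_\w(2)u=0$; or $z(1,\w,u)>0$, and then on $[1,2]$
\[ z(1,\w,u)\;\le\;z(t,\w,u)\exp\Bigl(-\int_1^t a(\theta_r\w)\,dr\Bigr)\;\le\;z(2,\w,u)\exp\Bigl(-\int_1^2 a(\theta_r\w)\,dr\Bigr),\]
while $z(s-1,\w,u)\le z(1,\w,u)\,c(\w)$ for $s\in[1,2]$. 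Substituting the last estimate into~\eqref{5.1<t<2} and applying H\"older with $\bigl(\int_1^2 b^q(\theta_r\w)\,dr\bigr)^{1/q}=d(\theta_1\w)$ bounds $z(2,\w,u)/z(1,\w,u)$ by $c(\theta_1\w)\bigl(1+c(\w)c(\theta_1\w)d(\theta_1\w)\bigr)$, a quantity depending only on $\w$. Choosing $\be=(1,\mathbf{1})/2\in X^+$ (so $\n{\be}_X=1$, with $\mathbf{1}$ the constant function on $[-1,0]$), the above — together with the facts that for $s\in[-1,0]$ the value $z(2+s,\w,u)$ lies in $\bigl[\,z(1,\w,u)/c(\theta_1\w),\,z(2,\w,u)\,c(\theta_1\w)\,\bigr]$ and that $z(2,\w,u)\ge z(1,\w,u)/c(\theta_1\w)$ — gives, with $\beta(\w,u):=2\,z(1,\w,u)/c(\theta_1\w)>0$,
\[ \beta(\w,u)\,\be\;\le\;U_\w(2)\,u\;\le\;\varkappa(\w)\,\beta(\w,u)\,\be,\qquad \varkappa(\w):=c(\theta_1\w)^{3}\bigl(1+c(\w)c(\theta_1\w)d(\theta_1\w)\bigr),\]
whenever $U_\w(2)u\ne0$; moreover $U_\w(2)\be\ne0$ because $z(1,\w,\be)>0$. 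Here $\varkappa$ is $(\mathfrak{F},\mathfrak{B}(\R))$-measurable, $\varkappa\ge1$, and $\ln\varkappa\le 4\ln c(\theta_1\w)+\ln c(\w)+\lnplus d(\theta_1\w)+\ln2$, which lies in $L_1\OFP$ by (S1) and $\PP$-invariance, so $\lnplus\ln\varkappa\in L_1\OFP$; thus (A3) holds for $T=2$. I expect the main obstacle to be precisely this verification — pinning down the exact dichotomy for $U_\w(2)$ and, above all, controlling the ratio $z(2,\w,u)/z(1,\w,u)$ uniformly over nonzero $u\in X^+$ by a function of $\w$ alone with the correct logarithmic integrability — and it is here that both the choice $T=2$ (a full delay period, after which the state is a continuous function bounded away from zero unless it vanishes identically) and the sign condition (S2) are essential.

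For the final inequality, apply Theorem~\ref{teor3.6}(4) (with $T=2$) to $u=\be$, for which $U_\w(2)\be\ne0$: this gives $\bar\lambda_1=\lim_{t\to\infty}t^{-1}\ln\n{U_\w(t)\,\be}$ for $\PP$-a.e.\ $\w$. Since $\n{U_\w(t)\,\be}_X\ge z(t,\w,\be)$ and, by the monotonicity above, $z(t,\w,\be)\ge z(2,\w,\be)\exp(\int_2^t a(\theta_r\w)\,dr)$ for $t\ge2$ with $z(2,\w,\be)>0$, one obtains
\[ \frac{1}{t}\ln\n{U_\w(t)\,\be}\;\ge\;\frac{1}{t}\ln z(2,\w,\be)+\frac{1}{t}\int_2^t a(\theta_r\w)\,dr\;\longrightarrow\;\int_\Omega a\,d\PP\]
as $t\to\infty$ for $\PP$-a.e.\ $\w$, the limit following from the (continuous-time) Birkhoff ergodic theorem applied to $a\in L_1\OFP$ under the ergodic flow $(\theta_t)$. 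Passing to the limit yields $\bar\lambda_1\ge\int_\Omega a\,d\PP$, as claimed.
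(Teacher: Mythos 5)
Your proof is correct and takes essentially the same approach as the paper: the same choice of $\be = (1/2, \mathbf{1}/2)$, the same key monotonicity of $t\mapsto z(t,\w,u)\exp\bigl(-\int_0^t a(\theta_r\w)\,dr\bigr)$ driving both the dichotomy and the focusing bound, and the same derivation of $\bar\lambda_1\ge\int_\Omega a\,d\PP$ from this monotonicity plus the Birkhoff ergodic theorem. The explicit constants you obtain in (A3) are a bit cruder than the paper's (you take $\beta(\w,u)=2z(1,\w,u)/c(\theta_1\w)$ and $\varkappa(\w)=c(\theta_1\w)^3\bigl(1+c(\w)c(\theta_1\w)d(\theta_1\w)\bigr)$, whereas the paper keeps $\beta(\w,u)=2\inf_{1\le t\le 2}\exp(\int_1^t a)\,z(1,\w,u)$ and $\varkappa(\w)=c(\theta_1\w)\bigl[1+\int_1^2\exp(-\int_{s-1}^s a)\,b(\theta_s\w)\,ds\bigr]$), but both choices satisfy $\lnplus\ln\varkappa\in L_1\OFP$, so this is immaterial.
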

\begin{proof} Since for $\w\in\Omega$ and $u\in X$ fixed the mapping $\bigl[\,\R^+\ni t\mapsto U_\w(t)\,u\in X\,\bigr]$ is easily seen to be continuous, the fact that the mapping
\[
\bigl[\,\R^+\times \Omega\times X\ni(t,\w,u) \mapsto U_\w(t)\,u\in X\,\bigr]\]
 is  $(\mathfrak{B}(\R^+)\otimes\mathfrak{F}\otimes\mathfrak{B}(X),
\mathfrak{B}(X))$-measurable follows from Proposition~\ref{5.Uwbounded}, Lemma~\ref{5.measurable} and Aliprantis and Border~\cite[Lemma 4.51 on pp.~153]{AliB}. The rest of the properties have been already checked, so that $\Phi$ is a \mlsps, as claimed. \par
\smallskip
 Concerning the first part of (A1) with $T=2$  notice that
 \[\sup\limits_{0 \le s \le 2} {\lnplus{\n{U_{\omega}(s)}}}\leq \sup\limits_{0 \le s \le 1} {\lnplus{\n{U_{\omega}(s)}}}+ \sup\limits_{1 \le s \le 2} {\lnplus{\n{U_{\omega}(s)}}}\]
 and from cocycle property~\eqref{eq-cocycle}
 \[
\sup\limits_{1 \le s \le 2} \lnplus{\n{U_{\omega}(s)}}=\sup\limits_{0 \le s \le 1} {\lnplus{\n{U_{\omega}(1+s)}}}\leq \sup\limits_{0 \le s \le 1} \bigl(\lnplus\n{U_{\theta_s\omega}(1)}+ \lnplus \n{U_{\w}(s)}\bigr).
\]
Therefore
\[\sup\limits_{0 \le s \le 2} {\lnplus{\n{U_{\omega}(s)}}}\leq 2\,\sup\limits_{0 \le s \le 1} {\lnplus{\n{U_{\omega}(s)}}}+ \sup\limits_{0 \le s \le 1} {\lnplus{\n{U_{\theta_s\omega}(1)}}}\]
and we have to check that both terms belong to $L_1\OFP$.
\par
\smallskip
As shown in Proposition~\ref{5.Uwbounded}, $\n{U_\w(t)}\leq 3\,c(\w)\,(1+d(\w))$ for each $\w\in\Omega$ and $t\in[0,1]$, where $c(\w)$ and $d(\w)$ are defined in~\eqref{5.defice}.
From $1 \leq c(\w) = \exp\bigl({\int_{0}^1 \lvert a(\theta_r\w) \rvert \,dr}\bigr) $  we deduce that  $\lnplus c(\w) = \ln c(w) = \int_0^1 \lvert a(\theta_r\w) \rvert \, dr$ belongs to $L_1\OFP$ because of (S1), Fubini's theorem and the invariance of $\PP$. Analogously, $\lnplus d(\w)$ belongs to $L_1\OFP$ because of (S1), and therefore, with the help of Lemma~\ref{lm:auxiliary2},
\begin{align*}
 \sup\limits_{0 \le s \le 1} {\lnplus{\n{U_{\omega}(s)}}}
&\le \ln(3\,c(\w)) + \ln(1+d(\w))
\\
& \le   \ln (3\,c(\w)) + \lnplus{d(\w)} + \ln{2}
= \lnplus{c(\w)} + \lnplus{d(\w)} + \ln{6}
\end{align*}
also belongs to $L_1\OFP$. For the second term, $\n{U_{\theta_s\w}(1)}\leq 3\,c(\theta_s\w)\,(1+d(\theta_s\w))$ for $0\leq s \leq 1$\,,
\begin{align*}
\lnplus c(\theta_s\w)& = \ln c(\theta_{s}(\w)) = \int_0^1 \lvert a(\theta_{r+s}\w) \rvert \, dr \leq \int_0^2 \lvert a(\theta_r\w) \rvert \,dr \, , \\
\lnplus (1+d(\theta_s\w)) & \le \lnplus{d(\theta_s\w)} + \ln{2} = \frac{1}{q} \lnplus{\int_{0}^2 b^q(\theta_{r}\w)\,dr} + \ln{2}
\\
& \le \frac{1}{q} \biggl( \lnplus{\int_{0}^1 b^q(\theta_{r}\w)\,dr} + \lnplus{\int_{0}^1 b^q(\theta_{1 + r}\w)\,dr} + \ln{2} \biggr) + \ln{2}
\\
& = \lnplus{d(\w)} + \lnplus{d(\theta_1\w)} + \frac{1 + q}{q}\, \ln{2}\,,
\end{align*}
and hence, an analogous argument using (S1), Fubini's theorem and the invariance of the measure $\PP$ proves that
\[\bigl[ \, \Omega \ni \omega \mapsto \sup\limits_{0 \le s \le 1}\,
{\lnplus{\n{U_{\theta_{s}\omega}(1)}}} \in [0,\infty) \, \bigr] \in L_1\OFP\]
and the first assertion of (A1) holds. We omit the second part of (A1) because it is analogous.
It is immediate to check that (A2) follows from (S2).
\par
\smallskip
We will finish by verifying that (A3) holds for time $T=2$. We consider the vector $\be=(1/2,u_0)\in X^+$ with $u_0(s)=1/2$ for each $s\in[-1,0]$.  We have $\n{\be}_X=1$ and it is immediate, via Remark~\ref{5.positivefrom}, to check that $z(t,\w,\be)>0$ for each $t\geq 0$, which in particular implies that $U_\w(2)\,\be=(z(2,\w,\be),z_2(\w,\be))\neq 0$.\par\smallskip
Let $u\in X^+$ such that $U_\w(2)\,u=(z(2,\w,u),z_2(\w,u))\neq 0$. We claim that $z(1,\w,u)>0$. Assume on the contrary that $z(1,\w,u)=0$. From Remark~\ref{5.positivefrom} we also deduce that
 $z(t,\w,0)=0$ for each $0\leq t\leq 1$ and hence $U_\w(1)\,u=0$, in contradiction with $U_{\theta_1\w}(U_\w(1)\,u)=U_\w(2)\,u\neq 0$.
From~\eqref{5.1<t<2} we deduce that $z(2+s,\w,u)\geq \exp\bigl({\int_1^{2+s}a(\theta_r\w)\,dr}\bigr)\,z(1,\w,u)$, and hence
\begin{equation}\label{5.leftineq}
2\,\inf_{1\leq t\leq 2}\exp\biggl({\int_1^t a(\theta_s\w)\,dr\biggr)}\,z(1,\w,u)\,\be\leq U_\w(2)\,u\,.
\end{equation}
A straightforward computation shows that
\[y'(t,\w,u)=\exp\bigg(-\int_{t-1}^t\!\!a(\theta_r\w)\,dr\bigg)\,b(\theta_t\,\w)\,y(t-1,\w,u)\]
where $z(t,\w,u)=\exp\bigl({\int_0^ta(\theta_r\w)\,dr}\bigr)\,y(t,\w,u)$.  Since $y'\geq 0$,  we obtain for $1\leq s\leq 2$
\[z(s-1,\w,u)\leq \exp\biggl(\int_0^{s-1}\!\!a(\theta_r\w)\,dr\biggr)\,y(1,\w,u)=\exp\biggr(-\int_{s-1}^1 \!\!a(\theta_r\w)\,dr\biggr)\,z(1,\w,u)\,.\]
Consequently, again \eqref{5.1<t<2} yields
\begin{align*}
z(t,\w,u)& \leq \exp\biggl({\int_{1}^t \!\!a(\theta_r\w)\,dr}\biggr)z(1,\w,u)\biggl[ 1+\int_1^t \exp\biggl({-\int_{s-1}^s \!\!a(\theta_r\w)\,dr}\biggr)\,b(\theta_s\w)\,ds\biggr]\\
&\leq \sup_{1\leq t\leq 2}\exp\biggl(\int_1^t \!\!a(\theta_{r}\w)\,dr\!\!\biggr)\,z(1,\w,u)\\ & \hspace{4.5cm}\cdot\biggl[ 1 + \int_1^2 \exp\biggl({-\int_{s-1}^s\!\! a(\theta_r\w)\,dr}\biggr)\,b(\theta_s\w)\,ds\biggr]
\end{align*}
for each $1\leq t\leq 2$. Moreover,
\[\frac{{\displaystyle \sup_{1\leq t\leq 2}} \exp\bigl({\int_1^t a(\theta_{r}\w)\,dr}\bigr)}{{\displaystyle \inf_{1\leq t\leq 2}} \exp\bigl({\int_1^t a(\theta_{r}\w) \, dr}\bigr)} \leq c(\theta_1\w)\,,\]
and therefore,
\begin{align*}
z(t,\w,u)& \leq \inf_{1\leq t\leq 2}\exp\biggl({\int_1^t \!\!a(\theta_{r}\w)\,dr}\biggr)\, c(\theta_1\w)\,z(1,\w,u)\\
&\hspace{4cm}\cdot \biggl[\, 1+\int_1^2 \exp\biggl({-\int_{s-1}^s \!\!a(\theta_r\w)\,dr}\biggr)\,b(\theta_s\w)\,ds\,\biggr]
\end{align*}
Finally, denoting by
\begin{align*}
\beta(\w,u)&=2\,\inf_{1\leq t\leq 2} \exp\bigg({\int_1^t \!\!a(\theta_{r}\w)\,dr}\biggr)\,z(1,\w,u)>0\,,\\ \varkappa(\w)& = c(\theta_1\w)\biggl[\, 1+\int_1^2 \exp\biggl({-\int_{s-1}^s \!\!\!a(\theta_r\w)\,dr}\biggr)\,b(\theta_s\w)\,ds\,\biggr]\,,
\end{align*}
and~\eqref{5.leftineq} we obtain
\[\beta(\w,u)\,\be\leq U_\w(2)\,u\leq \varkappa(\w)\,\beta(\w,u)\,\be\,.\]
In order to finish the proof we have to check that $\lnplus\ln\varkappa\in L_1((\Omega,\mathcal{F},\PP))$. However, notice that  $\lnplus\ln\varkappa(\w) \leq \ln\varkappa(\w)$ because $\varkappa (\w)\geq 1$ and thus, it suffices to prove that
$\ln \varkappa\in L_1((\Omega,\mathcal{F},\PP))$.
In addition, for $1\leq s\leq 2$
\[\exp\biggl({-\int_{s-1}^s \!\!a(\theta_r\w)\,dr}\biggr) \leq \exp\biggl({\int_{0}^2  \!\! \lvert a(\theta_r\w) \rvert \, dr} \biggr) = c(\w)\,c(\theta_1\w) \, , \]
and we deduce that
\begin{equation*}
\varkappa(\w) \leq c(\theta_1\w)^2\,c(\w)\biggl[ 1+\int_1^2 b(\theta_s\w)\,ds\biggr]\,,
\end{equation*}
consequently, with the help of Lemma~\ref{lm:auxiliary2} and the H\"older inequality,
\begin{align*}
\lnplus{\varkappa(\w)} & \le 2 \lnplus{c(\theta_1\w)} + \lnplus{c(\w)} + \lnplus{\biggl( 1 + \int_1^2 b(\theta_s\w) \, ds \biggr)}
\\
& \le 2 \lnplus{c(\theta_1\w)} + \lnplus{c(\w)} + \lnplus{\int_1^2 b(\theta_s\w) \, ds} + \ln{2}
\\
& \le 2 \lnplus{c(\theta_1\w)} + \lnplus{c(\w)} + \lnplus{\int_0^1 b(\theta_{1 + s}\w) \, ds} + \ln{2} \,,
\end{align*}
from which, as before, together with (S1), Fubini's theorem and the invariance of $\PP$, we conclude that $\ln \varkappa\in L_1((\Omega,\mathcal{F},\PP))$  and (A3) holds, as stated.
The inequality $\bar\lambda_1 \geq \int_\Omega a \, d\PP$ follows from Remark~\ref{5.positivefrom} and Birkhoff ergodic theorem.
\end{proof}
To sum up, we have proved the following.
\begin{teor}
\label{thm:exp-sep-appl}
Assume \textup{(S1)} and \textup{(S2)}.  Then the \mlsps\ $\Phi = \allowbreak ((U_\w(t))_{\w \in \Omega, t \in \R^{+}}, \allowbreak (\theta_t)_{t\in\R})$ generated by~\eqref{5.linear} admits a generalized exponential separation of type II, with $\tilde{\lambda}_1 > - \infty$.
\end{teor}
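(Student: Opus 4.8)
The plan is to derive the above statement as an essentially immediate consequence of the abstract results of Sections~\ref{sec-GPFS} and~\ref{sec-GES}, the substantive work having already been carried out in the preceding proposition, where the structural hypotheses are verified for the semiflow $\Phi$ generated by~\eqref{5.linear}. Invoking Remark~\ref{timeT}, one may rescale and work with the time $T=2$; the preceding proposition then asserts that $\Phi$ is a measurable linear skew-product semidynamical system on $X=\R\times L_p([-1,0],\R)$ satisfying (A1), (A2) and (A3) with $T=2$, and moreover that $\bar\lambda_1\ge\int_\Omega a\,d\PP$. So the first step is simply to record that that proposition applies.

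The second step is to note that the extra hypothesis required by Theorem~\ref{thm:exponential-separation}, namely $\bar\lambda_1>-\infty$, holds automatically: by (S1) the function $a$ is $\PP$-integrable, so $\int_\Omega a\,d\PP$ is a finite real number, whence $\bar\lambda_1\ge\int_\Omega a\,d\PP>-\infty$. Identifying $\widetilde\lambda_1$ with the generalized principal Lyapunov exponent $\bar\lambda_1$ furnished by Theorem~\ref{teor3.6}, this already yields the last clause of the statement, $\widetilde\lambda_1>-\infty$.

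The final step is to feed (A1), (A3) and $\bar\lambda_1>-\infty$ into Theorem~\ref{thm:exponential-separation}. That theorem produces an invariant set $\widetilde\Omega_0$ of full measure, the family of generalized principal Floquet subspaces $\{E_1(\w)\}_{\w\in\widetilde\Omega_0}$ with $E_1(\w)=\spanned\{w(\w)\}$, the complementary family of one-codimensional closed subspaces $\{F_1(\w)\}_{\w\in\widetilde\Omega_0}$ with $F_1(\w)\cap X^+=\{u\in X^+\mid U_\w(1)\,u=0\}$, a strongly measurable and tempered family of associated projections, and a number $\widetilde\sigma\in(0,\infty]$ realizing the required exponential gap; by Definition~\ref{def:exponential-separation} this is precisely a generalized exponential separation of type~II for $\Phi$, which completes the argument.

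I expect no genuine obstacle in this assembly; the real difficulty is upstream, in the preceding proposition, where one must exhibit a suitable anchor vector $\be$ and, above all, a measurable focusing function $\varkappa$ with $\lnplus\ln\varkappa\in L_1\OFP$ for the delay equation. The crucial points there are that for $u\in X^+$ either $U_\w(1)\,u=0$ (which happens exactly when the solution of~\eqref{5.linear} vanishes on $[0,1]$, i.e.\ for initial data with vanishing present value) or $z(1,\w,u)>0$, and that the variation-of-constants formulas~\eqref{5.0<t<1}--\eqref{5.1<t<2}, combined with the monotonicity coming from (S2), allow one to sandwich $U_\w(2)\,u$ between positive multiples of $\be$; the integrability of $\varkappa$ then follows from (S1) via Fubini's theorem and the invariance of $\PP$, with Lemma~\ref{lm:auxiliary2} controlling the logarithm of the sums that appear. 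The fact that $U_\w(1)$ may genuinely fail to be injective on $X^+$ is exactly why type~II, rather than type~I, is the notion that applies here.
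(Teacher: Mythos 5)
Your proposal is correct and coincides with the paper's argument: the paper states Theorem~\ref{thm:exp-sep-appl} as a summary ("To sum up, we have proved the following") of the preceding proposition, which verifies (A1), (A2), (A3) with $T=2$ and $\bar\lambda_1 \ge \int_\Omega a\,d\PP > -\infty$, combined with Theorem~\ref{thm:exponential-separation}. Your identification of $\widetilde\lambda_1$ with $\bar\lambda_1$ and your observation that the finiteness of $\bar\lambda_1$ follows from $a\in L_1\OFP$ are exactly the intended reading.
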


\end{document}